\documentclass[a4paper,10pt]{article}
\usepackage{geometry}
\usepackage[T1]{fontenc}
\usepackage[utf8]{inputenc}
\usepackage{lmodern}
\usepackage{microtype}
\usepackage{amsmath,amssymb,amsthm,mathtools}
\usepackage{enumitem}
\usepackage{hyperref}
\usepackage{mathrsfs}
\allowdisplaybreaks

\hypersetup{colorlinks=true,linkcolor=blue,citecolor=blue,urlcolor=blue}

\newtheorem{theorem}{Theorem}[section]  
\newtheorem{lemma}[theorem]{Lemma}      
\newtheorem{definition}[theorem]{Definition}
\newtheorem{proposition}[theorem]{Proposition}
\newtheorem{corollary}[theorem]{Corollary}
\newtheorem{case}{Case}
\newtheorem{claim}{Claim}
\theoremstyle{definition}
\newtheorem{problem}{Problem}

\newtheorem{conjecture}{Conjecture}

\newcommand{\NC}{N_C}
\newcommand{\dstar}{d^{\ast}}

\title{Longest cycles and Dirac-type results in highly connected graphs}
\author{Jie Ma\thanks{School of Mathematical Sciences, University of Science and Technology of China, Hefei, Anhui 230026, China, and Yau Mathematical Sciences Center, Tsinghua University, Beijing 100084, China. Research supported by National Key Research and Development Program of China 2023YFA1010201 and National Natural
Science Foundation of China grant 12125106. Email: \url{jiema@ustc.edu.cn}.}
\and Bo Ning\thanks{College of Computer Science, Nankai University, Tianjin 300350, P.R. China. Research supported by National Natural Science Foundation of China grant 12371350 and the Fundamental Research Funds for the Central Universities, Nankai University (No. 63263259). Email: \url{bo.ning@nankai.edu.cn}.}
\and Ziyuan Zhao\thanks{School of Mathematical Sciences, University of Science and Technology of China, Hefei, Anhui 230026, China. Research supported by Quantum Science and Technology-National Science and Technology Major Project 2021ZD0302902. Email: \url{ zyzhao2024@mail.ustc.edu.cn}.}}

\begin{document}
\maketitle

\begin{abstract}
A classical theorem of Nash-Williams states that if $G$ is a $2$-connected graph on $n$ vertices with minimum degree at least $(n+2)/3$, then for every longest cycle $C$ of $G$, the graph $G-V(C)$ is edgeless. Motivated by a higher-connectivity analogue, Bondy conjectured in 1980 that if $G$ is a $k$-connected graph on $n$ vertices with minimum degree at least $(n+k(k-1))/(k+1)$, then for every longest cycle $C$ of $G$, every path in $G-V(C)$ has at most $k-1$ vertices. This conjecture is known for $k\le 3$ and remains open for all $k\ge 4$.

In this paper, we prove Bondy's conjecture for all sufficiently large graphs. The key ingredient is a new Dirac-type theorem that gives a lower bound on the length of a longest cycle in a $k$-connected graph, which also yields a partial solution to a conjecture of Jung from 1990. Along the way, we develop several new tools, including a DFS lemma and an average-degree analogue of the Bondy--Jackson theorem. We conclude with a discussion of related problems and a counterexample to a conjecture of Voss from 1991.
\end{abstract}

\section{Introduction}

The study of longest cycles is a central topic in graph theory.
A fundamental theme in this area is the interplay between the structural parameters of a graph and the properties of its longest cycles.
A cornerstone result in this direction is Dirac's theorem~\cite{Dirac1952}, established in 1952, which asserts that every graph $G$ on $n\ge 3$ vertices with minimum degree $\delta(G)\ge n/2$ contains a {\it Hamilton cycle}, that is, a cycle passing through every vertex of $G$. 
Over the past 70 years, Dirac's theorem has served as a driving force behind the development of long-cycle theory, leading to numerous extensions and refinements; see the surveys~\cite{G2014,KO2014,LiSurvey2013}, and, for example, the recent works \cite{ChenShan2019,DCS2024,FGSS2022,KLM2024-1,KLM2024-2,KrivelevichLeeSudakov2014,KrivelevichLeeSudakov2017,LiNing2021,MaNing2020,MaYuan2024,NingYuan2025,ZhuEtAl2023}.

Another landmark result in the study of longest cycles is the dominating cycle theorem of Nash-Williams~\cite{NashWilliams1971}.
It states that if $G$ is a $2$-connected graph on $n\ge 3$ vertices with minimum degree $\delta(G)\ge (n+2)/3$, then every longest cycle $C$ of $G$ is {\it dominating}, that is, the graph $G-V(C)$ consists of isolated vertices.
This theorem has proved to be a powerful tool in Hamiltonian graph theory and has found sustained influence, including the study of Hamilton cycles and pancyclicity~\cite{AGK2021,BrandtFaudreeGoddard1998,BBVL1989,Fassbender1991}, edge-disjoint Hamilton cycles~\cite{ChristofidesKuhnOsthus2012,CsabaKuhnLoOsthusTre2016,FriezeKrivelevich2005,NashWilliams1971}, and Hamilton cycles in regular graphs \cite{BollobasHobbs1978,ErdosHobbs1978,KLOS2016,Woodall1978}.

Motivated by a higher-connectivity extension of these two theorems, Bondy~\cite{Bondy1980Report} 
put forward the following well-known conjecture in 1980.\footnote{Bondy's original conjecture was stated in a more general Ore-type form. Throughout this paper, we work with the minimum-degree formulation above, which can be viewed as a natural specialization of the original conjecture and has attracted considerable attention in the folklore of the area.}

\begin{conjecture}[Bondy~\cite{Bondy1980Report}]\label{conj:bondy}
Let $k\geq 1$ be an integer, and let $G$ be a $k$-connected graph on $n$ vertices. If
$\delta(G)\ge \frac{n+k(k-1)}{k+1},$
then for every longest cycle $C$ of $G$, every path in $G-V(C)$ has at most $k-1$ vertices.
\end{conjecture}

If true, this bound is sharp, by considering the following example:
for integers $t\geq k\ge 1$, let $G$ be the graph obtained from $k+1$ pairwise disjoint cliques of order $t$ by joining every vertex of a clique $K_k$ to every vertex of each of these cliques.
It can be verified that this graph $G$ is $k$-connected and has minimum degree $\frac{n+k(k-1)-1}{k+1}$ and for each longest cycle $C$ of $G$, $G-V(C)$ contains a path of $t\geq k$ vertices. 
We note that the cases $k=1$ and $k=2$ of Conjecture~\ref{conj:bondy} are precisely Dirac's
theorem and Nash-Williams' theorem, respectively.  
In \cite{Bondy1980Report}, Bondy proved a weaker version, by imposing strong assumptions on the connectivity of $G-V(C)$. 
Zhang~\cite{Zhang1988} verified the conjecture for claw-free graphs (i.e., induced $K_{1,3}$-free graphs).
A variant of this conjecture can be found in \cite{OzekiTsugakiYamashita2009}.
Conjecture~\ref{conj:bondy} was proved for $k\leq 3$ and remains open in general for $k\ge 4$ (see, for example, \cite{OzekiTsugakiYamashita2009}). 

In this paper, we confirm Conjecture~\ref{conj:bondy} for all sufficiently large graphs. Our main result is the following. 

\begin{theorem}\label{thm:main-bondy}
For every $k\geq 1$, there exists $n_k$ such that for any $k$-connected graph $G$ on $n\geq n_k$ vertices and  any longest cycle $C$ in $G$, if $\delta(G)\ge \frac{n+k(k-1)}{k+1}$, then every path in $G-V(C)$ has at most $k-1$ vertices.
\end{theorem}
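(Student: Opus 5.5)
We argue by contradiction. Let $C$ be a longest cycle and $P=x_1x_2\cdots x_k$ a path on $k$ vertices in $G-V(C)$, and write $\delta=\delta(G)\ge (n+k(k-1))/(k+1)$. The cases $k\le 2$ are the theorems of Dirac and Nash-Williams, so we may assume $k\ge 3$ and $n$ large. The approach combines two facts: an upper bound on $|V(C)|$ forced by the configuration around $P$, and a Dirac-type lower bound on the circumference of a $k$-connected graph, which the abstract announces as the key ingredient.

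\textbf{Step 1: lower bound on $|C|$.} I would establish, and then assume, a Dirac-type circumference theorem of the form $c(G)\ge \min\{n,\,(k+1)\delta-k(k-1)-O_k(1)\}$, or a slightly sharper version, valid for $k$-connected $G$ with $\delta$ linear in $n$ and $n$ large. Proving it is where the DFS lemma and the average-degree Bondy--Jackson analogue would be used. With $\delta\ge (n+k(k-1))/(k+1)$, the quantity $(k+1)\delta-k(k-1)$ is at least $n$, so $C$ is nearly Hamiltonian and $|G-V(C)|=O_k(1)$.

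\textbf{Step 2: upper bound via the off-cycle path.} For each $x_i$, all but $O_k(1)$ of its at least $\delta$ neighbours lie on $C$. Fix an orientation of $C$. The standard crossing arguments for a longest cycle give the following facts.
\begin{itemize}[leftmargin=*]
\item The successors $N_C(x_i)^+$ form an independent set.
\item No successor is adjacent to any vertex of $G-V(C)$ reachable from $x_i$.
\item For $i\ne j$, the sets $N_C(x_i)^+$ and $N_C(x_j)^+$ interact only in a limited way. Segments of $C$ between consecutive attachment vertices of $P$ must be long: if $u\in N(x_i)$ and $v\in N(x_j)$, then replacing $C[u,v]$ by the detour through the subpath of $P$ from $x_i$ to $x_j$ shows that $C(u,v)$ has at least $|i-j|+1$ vertices.
\end{itemize}
Using $k$-connectivity to pick many attachment points, I would run a Bondy/Woodall counting argument. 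The $k$ sets $N_C(x_i)^+$, together with $\{x_1,\dots,x_k\}$ and a further independent-like set, give $k+1$ sets whose members have pairwise few common neighbours on $C$. Summing degrees then yields
\[
n \;\ge\; (k+1)\delta-k(k-1)+1 .
\]
This contradicts the hypothesis.

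\textbf{Main obstacle.} The hardest part is making the degree count in Step 2 exact, so that the additive $k(k-1)$ comes out precisely rather than up to $O_k(1)$. Step 1 only localizes the problem: it shows that $G-V(C)$ is small, that almost all neighbourhoods lie on $C$, and that the attachment structure is essentially that of the extremal example, namely $k+1$ cliques glued through a $K_k$.

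For the exact count I would use a stability approach. From near-extremality, deduce that $V(C)$ splits into $k+1$ near-cliques separated by a set $S$ of about $k$ vertices. Then show that $P$ lies in a single part with all external attachments inside $S$. A path with $k$ vertices then forces $|S|\ge k$ together with an extra overlap, giving either a longer cycle or a degree deficit that violates the exact bound.
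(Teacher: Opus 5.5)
There is a genuine gap, and it is in Step~1, which is where all the difficulty lies. As stated, the bound $c(G)\ge\min\{n,(k+1)\delta-k(k-1)-O_k(1)\}$ is false for $k$-connected graphs with $\delta$ linear in $n$. In the sharpness example $K_k\vee(k+1)K_t$ one has $\delta=t+k-1$, so $(k+1)\delta-k(k-1)=n-1$. Yet every cycle makes at most $k$ excursions out of the $K_k$, so it misses a whole copy of $K_t$, and $c(G)=n-t$. Restricting to graphs that satisfy Bondy's hypothesis does not rescue it. Take $K_{a,b}$ with $a\ge k$ and $b=k(a-k+1)$: it is $k$-connected, meets $\delta\ge (n+k(k-1))/(k+1)$ with equality, and has $n-c(G)=(k-1)(a-k)$. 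So no bound of this type can give $|V(G-C)|=O_k(1)$; any usable lower bound on $|C|$ must exploit the long path in $G-V(C)$. The correct statement is the Jung-type bound of Theorem~\ref{thm:main-jung}, which is what the DFS lemma, the average-degree Bondy--Jackson theorem and the counting inequality \eqref{eq:41-counting} are used for. Under Bondy's hypothesis it only yields $|C|\ge k(n+2)/(k+1)$, i.e.\ $|V(G-C)|\le (n-2k)/(k+1)$. That is linear in $n$, so even the correct Step~1 does not deliver the localization your Step~2 relies on.

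The idea you are missing is a bootstrap that makes Step~2 and the stability analysis unnecessary. Let $H$ be the component of $G-V(C)$ containing your path. Since $|V(H)|\le|V(G-C)|\le (n-2k)/(k+1)$, every $v\in V(H)$ has $d_C(v)\ge \delta-(|V(H)|-1)\ge k+1$, so $H$ is locally $(k+1)$-connected to $C$. Moreover $k\le p(H)\le |V(G-C)|\le\delta-k$. This is the short-path regime, where the counting argument (Theorem~\ref{thm:41} applied with $k+1$ in place of $k$) gives $|C|\ge (k+1)(\delta-k+1)\ge n-k+1$. That contradicts $|V(H)|\ge k$. The exact constant $k(k-1)$ thus comes out of two applications of a Dirac-type bound, with no near-extremal structure theory.

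Your Step~2, by contrast, is not yet an argument. The ``further independent-like set'' is never specified. The claim that the $k+1$ sets have pairwise few common neighbours on $C$ is not justified, and neither is the decomposition into $k+1$ near-cliques. All of it is built on the false $O_k(1)$ localization.
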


\noindent Our proof yields the explicit bound $n_k=5k^2+7k$. Although we have not attempted to optimize the constant factors, the dependence on $k$ is quadratic with a relatively small leading coefficient.
The main ingredient in our proof is a new Dirac-type theorem on long cycles in $k$-connected graphs under natural additional assumptions.

Recall Dirac's classical long cycle theorem~\cite{Dirac1952}, which states that every $2$-connected graph on $n$ vertices contains a cycle of length at least $\min\{2\delta,n\}$. 
One might hope that the factor $2$ in the lower bound can be replaced by a quantity proportional to the connectivity. However, this is impossible in general, for instance, as shown by the complete bipartite graph $K_{k,n-k}$.
Motivated by this observation, Jung~\cite{Jung2001} conjectured that if $G$ is a $k$-connected graph, $C$ is a longest cycle of $G$, and $G-V(C)$ contains a path with at least $k-1$ vertices, then
$|C|\ge k(\delta-k+2).$
This has been verified for $k\le 6$; see~\cite{VumarJung2005}.

The following result plays an important role in the proof of Theorem~\ref{thm:main-bondy}. It confirms Jung's conjecture under the additional assumption $\delta(G)\ge 6k$. (We make no attempt to optimize the constant $6$.)

\begin{theorem}\label{thm:main-jung}
For $k\ge 2$, let $G$ be a $k$-connected graph with minimum degree $\delta\ge 6k$, and let $C$ be a longest cycle in $G$.
If $G-V(C)$ contains a path on at least $k-1$ vertices, then $|C|\ge k(\delta-k+2).$
\end{theorem}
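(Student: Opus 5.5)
We plan to argue by contradiction using the standard crossing/insertion technique for a longest cycle $C$, run on a carefully chosen component of $G-V(C)$. Fix an orientation of $C$, and for $v\in V(C)$ write $v^+$ and $v^-$ for its successor and predecessor. Among all paths in $G-V(C)$ on at least $k-1$ vertices, we choose one, $P=p_1p_2\cdots p_m$ with $m\ge k-1$, that is extremal. Concretely, we take $P$ longest in its component $H$ of $G-V(C)$, and subject to this we maximize $d_C(p_1)+d_C(p_m)$.

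The first step uses maximality of $P$. Every neighbour of $p_1$ (and of $p_m$) outside $C$ lies on $P$, so $d_C(p_1)\ge \delta-(m-1)$, and similarly for $p_m$. Pósa-type rotations of $P$ inside $H$ also give many alternative endpoints, each with at least $\delta-m+1$ neighbours on $C$. Two cases arise. If $m$ is large, say $m\ge \delta-k+2$, we do not use degrees into $C$ directly. Instead we apply $k$-connectivity (Menger / fan lemma) to obtain $k$ disjoint paths from $V(H)$ to $C$ ending at $x_1,\dots,x_k$ in cyclic order. Each segment $C[x_i,x_{i+1}]$ must then be at least as long as the longest path in $H$ between the corresponding attachment points, since otherwise we could reroute and get a longer cycle. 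Choosing the attachments along the long path $P$ gives $|C|\ge k(\delta-k+2)$. If $m$ is small, then the endpoints of $P$ (and their rotation-equivalents) each have $\ge \delta-m+1\ge\delta-k+2$ neighbours on $C$ whenever $m\le k-1$. In particular we may assume $m=k-1$ by truncating appropriately, after rechecking the degree bound.

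The second step is the counting. Let $X=N_C(p_1)\cup N_C(p_m)\cup\cdots$ be the union of the $C$-neighbourhoods of the available endpoints. A path of length $m-1$ in $H$ between two endpoints cannot be inserted between consecutive attachment points. So if $u,w\in X$ are attached to different ends of a path in $H$ on $m$ vertices, the segment $C(u,w)$ has at least $m$ interior vertices. Likewise, $X^+$ must be independent from $H$, and crossing-edge arguments (as in Nash-Williams and Bondy's $k\le 3$ proofs) force the successors of $X$ to be pairwise nonadjacent in a suitable sense. Combining these, we aim to show there are at least $k$ attachment points on $C$ that are pairwise separated by gaps of length $\ge \delta-k+2$. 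Roughly, we use $k$-connectivity to get $k$ disjoint $H$–$C$ paths, and we show each gap between consecutive feet contains $\ge\delta-k+1$ vertices of $X^+$-type, each forming an independent set with $H$ and its neighbours. Summing the gaps gives $k(\delta-k+2)$. The hypothesis $\delta\ge 6k$ is used to absorb the $O(k)$ error terms: vertices of $C$ adjacent to both endpoints, neighbours lost to the fan paths, and overlaps between the successor sets of different endpoints.

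We expect the main obstacle to be this gap-counting step: proving that between every two consecutive feet of the $k$-fan there are essentially $\delta-k+2$ vertices. The difficulty is that a single endpoint $p_1$ may send all of its $C$-neighbours into one segment. To handle this we would apply a DFS-type lemma that selects the path $P$ and its endpoints so that different endpoints of paths in $H$ "see" different segments. We would then use a Bondy--Jackson style averaging, controlling the average rather than minimum degree into segments, to distribute $\sum |N_C(\cdot)|$ over the $k$ segments. Finally, a standard exchange argument would show that any segment receiving too few neighbours yields a longer cycle.
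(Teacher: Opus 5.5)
\textbf{There is a genuine gap, and it sits in the step you defer.}

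In your large case ($m\ge\delta-k+2$) you assert that insertion along the $k$ feet $x_1,\dots,x_k$ of a fan, with ``attachments chosen along $P$'', gives $|C|\ge k(\delta-k+2)$. This does not follow.
\begin{itemize}
\item The cyclic order of the feet on $C$ is dictated by $G$. Maximality of $C$ only gives $|C[x_i,x_{i+1}]|\ge d^*_H(x_i,x_{i+1})$, and this depends on how far apart in $H$ the two feet attach, not on $|P|$.
\item Take $H=P$. This is allowed, since every vertex then has at least $\delta-2\ge k$ neighbours on $C$. Suppose the feet occur on $C$ in the same order as their ends $z_1,\dots,z_k$ occur along $P$. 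Then the $k$ segment bounds sum to $2|P[z_1,z_k]|+2k<2m+2k$. For $m=\delta-k+2$ and $k\ge 3$ this is well below $k(\delta-k+2)$.
\item In such a configuration $|C|$ is large only because every vertex of $P$ sends many edges to $C$. No $k$-fan detects this, and nothing in your argument forces the feet to alternate between distant parts of $P$.
\end{itemize}
This is why the paper splits the regime $p\ge\delta-k+2$ by block circumference.
\begin{itemize}
\item If some block $B$ of $H$ has $c(B)\ge 5k/2$, it reduces to a $k$-triple system on $B$ (Lemmas~\ref{lem:43}--\ref{lem:45}). It then bounds $d^*_B(u_i,u_{i+1})$ from below by degrees, via the average-degree Bondy--Jackson Theorem~\ref{thm:25}.
\item If every block has circumference $<5k/2$, it abandons attachments. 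Lemma~\ref{lem:31}(ii) gives $p$ vertices $U$, disjointly linked to $P$, with $\sum_{u\in U}d_H(u)\le(2c(H)-2)(p-1)$. Hence $|C|\ge e(U,C)\ge p(\delta-5k+2)+5k-2\ge k(\delta-k+2)$.
\end{itemize}
That last inequality is where $\delta\ge 6k$ is genuinely needed, not in absorbing $O(k)$ error terms.

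\textbf{Your small case fails similarly.}
\begin{itemize}
\item For $k-1\le m\le\delta-k+1$, the ends of $P$ are only guaranteed $\delta-m+1$ neighbours on $C$, possibly just $k$.
\item Truncating $P$ to $k-1$ vertices destroys the maximality that produced $d_C(p_1)\ge\delta-m+1$, so that bound cannot be ``rechecked''.
\item Even for $m=k-1$, if $N_C(p_1)$ and $N_C(p_m)$ lie on two disjoint arcs of $C$, insertion forces only gaps of length $2$ inside each arc. That totals about $4(\delta-k+2)$, which is too small once $k\ge 5$.
\item Your target of $k$ fan feet separated by gaps $\ge\delta-k+2$ fails for an arbitrary fan. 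Consecutive feet may attach to the same vertex of $H$, or to adjacent vertices of a tree-like $H$, forcing gaps of only $2$ or $3$. Choosing good feet is the entire difficulty, and the DFS lemma and Bondy--Jackson averaging you name are not shown to achieve it.
\end{itemize}
The paper's mechanism is different. Take any $U=\{u_1,\dots,u_p\}$ joined to $P=v_1\cdots v_p$ by disjoint paths $R_i$ meeting $P$ only in $v_i$. Applying Lemma~\ref{lem:21} to each pair of consecutive vertices of $N_C(U)$ yields \eqref{eq:41-counting}, namely $|C|\ge e(U,C)+|N_C(U)|$. This is a sum over \emph{all} $C$-neighbours of $U$, not over $k$ segments. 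Combined with Lemma~\ref{lem:31}(i), or with $U=V(H)$ when $P$ is Hamiltonian, it gives $|C|\ge(p+1)(\delta-p+1)\ge k(\delta-k+2)$ by concavity of $x(\delta+2-x)$. The extremal picture there is roughly $\delta-p+2$ attachment points with gaps of about $p+1$, not $k$ gaps of length $\delta-k+2$.
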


To prove Theorems~\ref{thm:main-bondy} and~\ref{thm:main-jung}, we consider their ``local'' versions, namely Theorems~\ref{thm:local-bondy} and~\ref{thm:local-jung}, respectively. 
Roughly speaking, these local formulations are more amenable to analysis, as they admit a number of useful local operations. To handle the challenges arising from the local structure, we develop several new tools, including a DFS-tree lemma and an average-degree analogue of the Bondy--Jackson theorem~\cite{BondyJackson1985}.
We refer the reader to Section~\ref{Sec:2} for a more detailed discussion.

The paper is organized as follows.
In Section~\ref{Sec:2}, we reduce Theorems~\ref{thm:main-bondy} and~\ref{thm:main-jung} to their local statements Theorems~\ref{thm:local-bondy} and~\ref{thm:local-jung}, and outline the proof strategy.
In Section~\ref{sec:pre}, we collect the notation and auxiliary lemmas used throughout the paper.
The two main technical ingredients are established in Sections~\ref{Sec:2.5} and~\ref{Sec:3}: an average-degree version of the Bondy--Jackson theorem and a DFS lemma, respectively.
These ingredients are combined in Section~\ref{Sec:4} to prove Theorems~\ref{thm:local-bondy} and~\ref{thm:local-jung}.
Finally, in Section~\ref{sec:conclude}, we conclude with a discussion of related conjectures and open problems motivated by this work, including a positive result on a conjecture of Fan, a counterexample to a conjecture of Voss, and two further problems.

\section{Reduction and proof sketch}\label{Sec:2}
In this section, we show how Theorems~\ref{thm:main-bondy} and~\ref{thm:main-jung} follow from their local counterparts, Theorems~\ref{thm:local-bondy} and~\ref{thm:local-jung}, respectively, and then provide an outline of the proofs of the latter two theorems.

We begin with some notation.
Let $G$ be a graph, $C$ a cycle of $G$, and $H$ a component of $G-V(C)$.
We say that \(C\) is \emph{locally longest with respect to \(H\)}
if, for every two distinct vertices \(x,y\in V(C)\), every path in \(G\) whose endpoints are \(x\) and \(y\) and whose internal vertices
all lie in \(V(H)\) has length at most the minimum of the lengths of the two subpaths of \(C\) with endpoints \(x\) and \(y\). The component $H$ is \emph{locally $k$-connected to $C$} if, for every $x\in V(H)$, there exist $k$ paths from $x$ to $k$ distinct vertices of $C$, any two of which intersect only in $x$.

\begin{definition}
Let $G$ be a connected graph, $C$ a non-Hamiltonian cycle of $G$, and $H$ a component of $G-V(C)$. For an integer $k\ge 2$, we say that $(G,C,H)$ is a {\bf $k$-triple system} if $C$ is locally longest with respect to $H$ in $G$, and $H$ is locally $k$-connected to $C$ in $G$.
\end{definition}

We now state the local theorems. Let $p(H)$ denote the number of vertices of a longest path in $H$.
Thus, $p(H)$ is an order parameter, not an edge-length parameter.

\begin{theorem}\label{thm:local-bondy}
For $k\ge 2$, let $(G,C,H)$ be a $k$-triple system with $|V(G)|=n$. If
$\delta_G(H)\ge \frac{n+k(k-1)}{k+1}$
and
$n\ge 5k^2+7k$,
then $p(H)\leq k-1$.
\end{theorem}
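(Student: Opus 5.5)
We argue by contradiction: suppose $p(H)\ge k$. The goal is to show $|C|$ is too large for $n$. Set $\delta=\delta_G(H)\ge \frac{n+k(k-1)}{k+1}$ and $h=|V(H)|\ge k$.

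The first step is to invoke the local Jung-type statement, Theorem~\ref{thm:local-jung}, whose hypotheses are satisfied by a $k$-triple system in which $H$ contains a path on at least $k-1$ vertices. We need $\delta\ge 6k$. This follows from $n\ge 5k^2+7k$, since then
\[
\frac{n+k(k-1)}{k+1}\ge \frac{6k^2+6k}{k+1}=6k .
\]
The conclusion is $|C|\ge k(\delta-k+2)$.

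This alone does not suffice. With $n\ge |C|+h$ we only get
\[
n\ge k(\delta-k+2)+h \ge \frac{k(n+k(k-1))}{k+1}-k^2+2k+h,
\]
that is, $n\ge k(k-1)-(k+1)(k^2-2k-h)$, which is no contradiction. So the extra ingredient is to exploit that $H$ has a path on $k$ vertices, one more than Jung's hypothesis requires. We aim to strengthen the bound to roughly $|C|\ge (k+1)(\delta-k+1)$, or at least to show that $|C|+|V(H)|$ exceeds $(k+1)\delta-k(k-1)-1$.

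Concretely, let $P=v_1\dots v_k$ be a longest path in $H$ (or a suitably chosen DFS path, via the DFS lemma). Each $v_i$ has at least $\delta-(h-1)$ neighbours on $C$. Local $k$-connectivity supplies at least $k$ disjoint attachment points, and hence at least $k+1$ "segments'' of $C$ between consecutive attachment vertices. By the local longest-cycle property, each segment must have length at least about $\delta-h+\ell$, where $\ell$ measures the distance in $H$ between the attaching vertices; otherwise rerouting through $H$ yields a longer local detour. This is the Bondy--Jackson-type counting: neighbourhoods of path endpoints, and their successors, must be disjoint on $C$.

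The argument then splits into two regimes.
\begin{itemize}
\item If $h$ is small (bounded by about $k$), the $C$-neighbourhoods are large. The counting gives $k+1$ segments each of length at least $\delta-k+1$, so $n\ge (k+1)(\delta-k+1)+k$, which contradicts the degree bound.
\item If $h$ is large, we use the average-degree Bondy--Jackson analogue inside $H\cup$ (attachments). Vertices of $H$ then have many neighbours within $H$, so $H$ contains long paths. This forces $C$ to be long (at least $k$ times the path length, by local longestness), and $|C|+h>n$ follows once $n\ge 5k^2+7k$.
\end{itemize}

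The main obstacle is the first regime: turning local $k$-connectivity together with the longest path of order $k$ into $k+1$ (rather than $k$) disjoint long segments. I would first try choosing $P$ via the DFS lemma so that its endpoints have disjoint attachment sets on $C$. Then I would apply the standard crossing argument, "successors of neighbours are non-adjacent to $H$'', to gain the extra segment.
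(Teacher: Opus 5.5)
Your first step (checking $\delta\ge 6k$ and invoking Theorem~\ref{thm:local-jung}) is right, and so is your target $|C|\ge (k+1)(\delta-k+1)$. However, there is a genuine gap between the two: neither regime you describe actually produces the extra factor. The claim that $k$ attachment points give ``at least $k+1$ segments'' is false, since $k$ points cut $C$ into $k$ segments. In the first regime, ``$k+1$ segments each of length at least $\delta-k+1$'' is asserted without any mechanism. The second regime is a heuristic with no quantitative argument. You flag the extra segment yourself as the unresolved obstacle.

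In fact you discarded the decisive information in your own computation. Rearranging $n\ge k(\delta-k+2)+h$ correctly gives $n\ge (k+1)h+2k$, i.e.\ $h\le \frac{n-2k}{k+1}$; you wrote $k(k-1)$ where $k^2(k-1)$ belongs. The paper uses the same bound in the form $|V(G-C)|=n-|C|\le\frac{n-2k}{k+1}$.

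This bound is the missing bootstrap.
\begin{itemize}
\item For every $v\in V(H)$ it gives $d_H(v)\le h-1$, hence $d_C(v)\ge \frac{n+k(k-1)}{k+1}-\frac{n-2k}{k+1}+1=k+1$. So every vertex of $H$ has $k+1$ direct neighbours on $C$, and $(G,C,H)$ is a $(k+1)$-triple system.
\item It also gives $k\le p(H)\le h\le\frac{n-2k}{k+1}\le \delta-k$. This is exactly the hypothesis of the small-$p$ result, Theorem~\ref{thm:41}, with $k+1$ in place of $k$.
\item That theorem (the counting inequality \eqref{eq:41-counting} combined with the DFS lemma) yields $|C|\ge (k+1)(\delta-k+1)\ge n-k+1$. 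Then $n\ge |C|+|V(H)|\ge n+1$, a contradiction.
\end{itemize}
So no crossing argument and no case split on $h$ are needed. The Jung bound at level $k$ shrinks $G-C$. The degree condition then upgrades the local connectivity to $k+1$ for free. Finally, the Jung-type bound is applied a second time at level $k+1$.
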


\begin{theorem}\label{thm:local-jung}
For $k\ge 2$, let $(G,C,H)$ be a $k$-triple system. If $\delta_G(H)\ge 6k$ and $p(H)\ge k-1$, then
$|C|\ge k\bigl(\delta_G(H)-k+2\bigr).$
\end{theorem}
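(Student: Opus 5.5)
We prove Theorem~\ref{thm:local-jung} by a weighting argument on $C$. We find many vertices of $H$ with many neighbours on $C$ and arrange for their attachments to be far apart along $C$.

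\textbf{Step 1: many good vertices.} Write $\delta=\delta_G(H)$ and let $P$ be a longest path in $H$, so $|V(P)|=p(H)\ge k-1$. We first want $k-1$ (or more) vertices $x_1,\dots,x_{k-1}$ in $H$, each with at least $\delta-c$ neighbours on $C$ for a small $c$ of order $k$. This is where the DFS lemma (Section~\ref{Sec:3}) comes in. If $H$ is large and has many internal edges, we run a DFS tree in $H$; a long root-to-leaf path in that tree would give a long $H$-path between two vertices of $C$. By the locally longest property, such a path forces the corresponding segment of $C$ to be long, and summing these segments already gives $|C|\ge k(\delta-k+2)$. Otherwise, leaves of the DFS tree have all their neighbours among their ancestors, so few neighbours lie inside $H$. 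This produces vertices $x_i$ with $d_C(x_i)\ge \delta-(k-2)$, since a leaf at depth at most $k-2$ has at most $k-2$ neighbours in $H$. The average-degree Bondy--Jackson analogue (Section~\ref{Sec:2.5}) is what we use to treat the case where $H$ is dense. There, the average degree in $H$ exceeds $2(k-1)$-ish, and the theorem yields a long path in $H$ whose endpoints we can connect to $C$ using local $k$-connectivity.

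\textbf{Step 2: counting on $C$.} Fix an orientation of $C$ and let $N=\bigcup_i N_C(x_i)$. For two consecutive attachment vertices $u,v\in N$ (in cyclic order) coming from $x_i$ and $x_j$, we join them by a $u$--$v$ path through $H$ of length at least $2+\mathrm{dist}_H(x_i,x_j)$. We use $P$ together with the $k$ disjoint fans guaranteed by local $k$-connectivity to make this path long, ideally through all the $x$'s. The locally longest property then forces the segment $C[u,v]$ to have length at least the length of that path. This is a standard "crossing" lemma: the successors $u^+$ of neighbours of distinct $x_i$ are pairwise distinct and form an independent-type structure. The conclusion is that each $x_i$ contributes at least $\delta-k+2$ distinct vertices of $C$, with the contributions of different $x_i$ disjoint. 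Summing over $k$ "layers" then gives $|C|\ge k(\delta-k+2)$. We need $k$ layers, whereas Step 1 supplies only $k-1$ vertices; the extra layer comes from the gap lengths. Each gap between consecutive attachments has length at least $2$ plus the path length inside $H$, and along a path on $k-1$ vertices these lengths reach about $k$.

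\textbf{Main obstacle.} The hardest step is Step 1 together with the matching in Step 2. Vertices with large $C$-degree must lie on a common path of $H$ of order at least $k-1$ in a way that lets every pair of consecutive attachments be joined by a path using that whole segment. We would first try taking $x_1,\dots,x_{k-1}$ to be the vertices of a DFS leaf-to-root path. The hypothesis $\delta\ge6k$ is then used to absorb the loss caused by neighbours of $x_i$ inside $H$ and the endpoint corrections, and to rule out degenerate configurations via the average-degree Bondy--Jackson bound.
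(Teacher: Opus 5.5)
There is a genuine gap. Your plan has no valid mechanism for the regime where $p(H)$ is large and every vertex of $H$ has many neighbours inside $H$, and that is where most of the work lies.

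Step~1 requires $k-1$ vertices with $d_C(x_i)\ge\delta-k+2$, i.e.\ $d_H(x_i)\le k-2$ when $d_G(x_i)=\delta$. Such vertices need not exist. Let $H$ be a long chain of copies of $K_{2k}$ glued at cut-vertices, and give every vertex of $H$ degree exactly $\delta$ in $G$. Spread the $C$-neighbours far apart on a long cycle $C$. This is a $k$-triple system in which every vertex of $H$ has at least $2k-1$ neighbours in $H$, so no vertex has $\delta-k+2$ neighbours on $C$. Two further problems with Step~1: a DFS tree may have a single leaf, and on a leaf-to-root path only the leaf has its $H$-neighbours confined to its ancestors. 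The internal vertices of that path can have arbitrarily large $H$-degree.

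Your fallback does not cover this case either. A single long $(u,H,v)$-path of length $L$ only yields $|C|\ge 2L$. What you need is about $k$ disjoint segments of length roughly $\delta-k+2$. That requires a cyclic family of at least $k$ strongly attached pairs, each joined by a long $H$-path, with the loss from $N_C(H)\setminus T$ controlled. Moreover, Theorem~\ref{thm:24} only works inside a $2$-connected piece. In the example above, such a piece has paths of length only about $2k$, far below $\delta-k+2$.

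Step~2 is also not correct as stated. Distinct $x_i$ may share neighbours on $C$, so their contributions are not disjoint, and $\mathrm{dist}_H(x_i,x_j)$ plays no role. What your ``extra layer'' intuition really needs is the inequality $|C|\ge e(U,C)+|N_C(U)|$ from the proof of Theorem~\ref{thm:41}. It uses pairwise disjoint linkages of $U$ to distinct vertices of one path $P$ (Proposition~\ref{prop:32}, not fans to $C$), together with Lemma~\ref{lem:21} applied to each consecutive pair of $N_C(U)$. With that inequality, $k-1$ such vertices of total $H$-degree at most $(k-1)(k-2)$ would suffice; this is the case $p=k-1$ of Theorem~\ref{thm:41}. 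As shown above, however, they need not exist.

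The paper instead uses aggregate degree bounds for a set $U$ of size $p$:
\begin{itemize}
\item For $p\le\delta-k+1$, Lemma~\ref{lem:31}(i) (or $U=V(H)$ if $P$ is Hamiltonian) gives $|C|\ge (p+1)(\delta-p+1)\ge k(\delta-k+2)$.
\item For $p\ge\delta-k+2$ this bound collapses, and the proof splits on circumference. If some block has $c(B)\ge 5k/2$, it uses the end-block/non-end-block attachment analysis with Theorem~\ref{thm:25} (Lemmas~\ref{lem:43}--\ref{lem:45}).
\item Otherwise it applies Lemma~\ref{lem:31}(ii). Here it is the size $p\ge 5k+2$ of $U$, together with $\delta\ge 6k$, that gives $e(U,C)\ge p(\delta-5k+2)+5k-2\ge k(\delta-k+2)$. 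No per-vertex $C$-degree bound is used.
\end{itemize}
None of these ingredients appears in your plan: the threshold on $p$, the circumference split, or the use of many vertices each with only a weak individual $C$-degree bound.
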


\medskip

\noindent {\bf \underline{Proof of the reductions.}}
We now deduce Theorems~\ref{thm:main-bondy} and~\ref{thm:main-jung} from Theorems~\ref{thm:local-bondy} and~\ref{thm:local-jung}.

\begin{proof}[Proof of Theorem~\ref{thm:main-bondy} (assuming Theorem~\ref{thm:local-bondy})]
Let $n_k=5k^2+7k.$
The case $k=1$ follows from Dirac's Theorem.
Assume $k\geq 2$. Let $G$ be a $k$-connected graph on $n\ge n_k$ vertices with
$\delta(G)\ge \frac{n+k(k-1)}{k+1},$
and let $C$ be a longest cycle of $G$.
If $C$ is Hamiltonian, then the conclusion is immediate.
Otherwise, for every component $H$ of $G-V(C)$,
Menger's theorem and the maximality of $C$ imply that $(G,C,H)$ is a
$k$-triple system. Hence Theorem~\ref{thm:local-bondy} implies that $p(H)\le k-1$, as desired.
\end{proof}

\begin{proof}[Proof of Theorem~\ref{thm:main-jung} (assuming Theorem~\ref{thm:local-jung})]
Let $G$ be a $k$-connected graph with minimum degree $\delta\ge 6k$, and let $C$ be a longest cycle of $G$. Suppose that $G-V(C)$ contains a path with
at least $k-1$ vertices, and let $H$ be the component containing such a
path. Then $(G,C,H)$ is a $k$-triple system, and clearly
$\delta_G(H)\ge \delta$. 
Therefore, Theorem~\ref{thm:local-jung} gives
$|C|\ge k\bigl(\delta_G(H)-k+2\bigr)\ge k(\delta-k+2)$, as desired.
\end{proof}

\medskip
\noindent {\bf \underline{Proof sketch.}}
The central technical part of the paper is the proof of Theorem~\ref{thm:local-jung}, from which Theorem~\ref{thm:local-bondy} follows (see Subsection~\ref{sec:final proof}). 
We thus concentrate on Theorem~\ref{thm:local-jung}.

Let $(G,C,H)$ be a $k$-triple system with $\delta:=\delta_G(H)\ge 6k$ and $p:=p(H)\ge k-1$.
To obtain a lower bound on the length of the cycle $C$, the proof employs two principal strategies.
The first is Fan's attachment argument (Lemma~\ref{lem:23}), which turns long paths in $H$ between vertices attached to $C$ into lower bounds on the corresponding segments of $C$.
The second is based on Equation~\eqref{eq:41-counting}, which gives $|C|\ge e(U,C)+|N_C(U)|$, where the key is to find a subset $U\subseteq V(H)$ that is suitably linked to $C$.
Here $e(U,C)$ denotes the number of edges between $U$ and $V(C)$, while $N_C(U)$ consists of vertices in $C$ incident to $U$.

Let $c(H)$ denote the length of a longest cycle in $H$. 
The proof of Theorem~\ref{thm:local-jung} is divided into three regimes according to $p$ and $c(H)$,
with each regime employing one of the above strategies; see Table~\ref{tab:roadmap}.

When either $p$ or $c(H)$ is small,
we apply Equation~\eqref{eq:41-counting} and use a DFS-tree lemma (Lemma~\ref{lem:31}) to select a suitable subset $U\subseteq V(H)$. 
The crucial property is that the total degree of $U$ in $H$ can be bounded in terms of $p$ and $c(H)$, which in turn yields a strong lower bound on $e(U,C)$.
In the remaining regime, we instead employ Fan's attachment argument.
To make this work, we first obtain estimates on the lengths of paths in $H$ via average-degree versions of the Bondy--Jackson theorem (Theorems~\ref{thm:24} and~\ref{thm:25}). 
Combining these estimates with Fan's attachment argument then gives the desired lower bound on $|C|$.

\begin{table}[htbp]
\centering
\caption{Roadmap for the proof of Theorem~\ref{thm:local-jung}.}
\begingroup
\renewcommand{\arraystretch}{1.18}
\small
\begin{tabular}{@{}p{0.29\textwidth}p{0.39\textwidth}p{0.24\textwidth}@{}}
\hline
\textbf{Case Conditions} &
\textbf{Tools} &
\textbf{Proof Location} \\
\hline
$k-1\le p\le \delta-k+1$ &
\eqref{eq:41-counting} $+$ DFS lemma & Subsection~\ref{sec:small p}\\[1mm]

$p\ge \delta-k+2$ and $c(H)\ge 5k/2$ &
attachment argument $+$ average Bondy--Jackson &
Subsection~\ref{sec:large c}\\[1mm]

$p\ge \delta-k+2$ and $c(H)<5k/2$ &
\eqref{eq:41-counting} $+$ DFS lemma &
Subsection~\ref{sec:final proof} \\
\hline
\end{tabular}
\endgroup
\label{tab:roadmap}
\end{table}

\section{Preliminaries}\label{sec:pre}
We use the following notation throughout.
For a vertex set $A\subseteq V(G)$ or a subgraph $A$ of a graph $G$, we write $G-A$ for the
graph obtained from $G$ by deleting all vertices in $A$. For vertex sets
or subgraphs $A,B$ of $G$, let $N_A(B)$ denote the set of vertices in $A$
but not in $B$ that have a neighbor in $B$. When $B=\{b\}$, write
$N_A(b):=N_A(\{b\})$ and define $d_A(b):=|N_A(b)|$.
If $A$ and $B$ are disjoint, we write $e(A,B)$ for the number of edges of $G$ with one endpoint in $A$ and the other in $B$.
We use $K_n$ to denote a complete graph on $n$ vertices. Let $G$ and $H$ be two graphs. For a positive integer $t$,
let $tH$ denote the disjoint union of $t$ copies of $H$.  We use $G\cup H$ and $G\vee H$ to denote the disjoint union and join of $G$ and $H$, respectively. The symbol $\delta(G)$ denotes the minimum degree of $G$.

For $x,y\in V(G)$ and a subgraph $H$ of $G$, an $(x,y)$-path is a path
with endpoints $x$ and $y$, and an $(x,H,y)$-path is an $(x,y)$-path
whose internal vertices all lie in $V(H)$; in particular, the edge $xy$,
when present, is an $(x,H,y)$-path.
Let $\delta_G(H):=\min\{d_G(v): v\in V(H)\}.$
We write $\dstar_H(x,y)$ for the length of a longest $(x,H,y)$-path if such a path exists, and set $\dstar_H(x,y)=0$ otherwise.
For a path or cycle $Q$, its length $|Q|$ is the number of edges of $Q$. The circumference of a graph $H$, denoted by $c(H)$, is the length of a longest cycle of $H$; if $H$ is a forest we set $c(H)=2$. 
Let $p(H)$ denote the number of vertices of a longest path in $H$.

For vertices $a,b$ on a path $P$, write $P[a,b]$ for the subpath of $P$ with endpoints $a$ and $b$.
Let $C$ be a cycle in $G$.
Two distinct vertices $x,y\in V(C)$ are called \emph{strongly attached} to a subgraph $H$ in $G$ if there exist distinct vertices $z_1,z_2\in V(H)$ such that $xz_1,yz_2\in E(G)$. A subset $T=\{x_1,x_2,\dots,x_t\}\subseteq V(C)$ is called a \emph{strong attachment} of $H$ to $C$ if the vertices $x_i$ appear on $C$ in cyclic order and each consecutive pair $x_i,x_{i+1}$ is strongly attached, where indices are always taken modulo $t$.
It is \emph{maximal} if it is not properly contained in another strong
attachment, and \emph{maximum} if it has maximum cardinality.
For a set $X=\{x_1,x_2,\dots,x_t\}\subseteq V(C)$ with $t\ge 2$ listed in cyclic order on $C$, we write $C[x_i,x_{i+1}]$ for the segment of $C$ from $x_i$ to $x_{i+1}$ according to this cyclic order, where indices are taken modulo $t$. Each $C[x_i,x_{i+1}]$ is called an $(x_i,x_{i+1})$-segment of $C$. These $X$-segments are edge-disjoint and partition the edges of $C$; in particular, $|C|=\sum_{i=1}^t |C[x_i,x_{i+1}]|$.

We now prove and collect a sequence of lemmas that will be used in the subsequent proofs. 
The first lemma guarantees the existence of a long subpath.

\begin{lemma}\label{lem:21}
Let $P$ be a path and let $A,B\subseteq V(P)$ be two non-empty subsets, possibly intersecting. Then there exist vertices $a\in A$ and $b\in B$ such that the subpath $P[a,b]$ has length at least
$\frac{|A|+|B|}{2}-1.$
\end{lemma}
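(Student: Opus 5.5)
Label the vertices of $P$ by their positions, so that $P=v_0v_1\cdots v_m$ and $P[v_i,v_j]$ has length $|i-j|$. Let $a_{\min},a_{\max}$ be the smallest and largest indices of vertices in $A$, and define $b_{\min},b_{\max}$ in the same way for $B$. Since $A$ occupies $|A|$ distinct indices in $[a_{\min},a_{\max}]$, we have $a_{\max}-a_{\min}\ge |A|-1$, and likewise $b_{\max}-b_{\min}\ge |B|-1$. The idea is to compare two candidate pairs, each joining an extreme vertex of $A$ to the opposite extreme vertex of $B$, and to show that their lengths sum to at least $|A|+|B|-2$.

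The two candidates are the pairs $(a_{\max},b_{\min})$ and $(a_{\min},b_{\max})$. Their lengths satisfy
\[
|a_{\max}-b_{\min}|+|b_{\max}-a_{\min}|\ \ge\ (a_{\max}-b_{\min})+(b_{\max}-a_{\min}) = (a_{\max}-a_{\min})+(b_{\max}-b_{\min})\ \ge\ |A|+|B|-2 .
\]
Hence at least one of the two pairs gives a subpath of length at least $\frac{|A|+|B|}{2}-1$, as required. Overlaps between $A$ and $B$ cause no difficulty. The case $a=b$ can occur only when the bound $\frac{|A|+|B|}{2}-1$ is at most $0$, which happens only if $|A|=|B|=1$, and in that case the statement is trivial.

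No step is a real obstacle. The one thing to get right is choosing opposite extremes, not the same-side extremes, so that the inequality above telescopes correctly; once that choice is made the argument is a direct computation.
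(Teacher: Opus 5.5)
Your proof is correct and follows the same route as the paper: both compare the two ``crossed'' pairs (first vertex of $A$ with last vertex of $B$, and last vertex of $A$ with first vertex of $B$) and show that their lengths sum to at least $|A|+|B|-2$. The only difference is that the paper proves the inequality $|P[a_1,b_2]|+|P[a_2,b_1]|\ge |P[a_1,a_2]|+|P[b_1,b_2]|$ by an edge-by-edge counting argument, whereas your index labelling reduces it to the one-line estimate $|x|\ge x$, which is a bit cleaner.
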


\begin{proof}
Order the vertices of $P$ from one end to the other. Let $a_1$ and $a_2$ be the first and the last vertices of $A$ on $P$, respectively, and let $b_1$ and $b_2$ be the first and the last vertices of $B$ on $P$, respectively.

We first observe that $|P[a_1,b_2]|+|P[a_2,b_1]|\ge |P[a_1,a_2]|+|P[b_1,b_2]|$. Indeed, consider any edge $e$ of $P$. If $e$ is counted twice by $|P[a_1,a_2]|+|P[b_1,b_2]|$, then $e$ separates $a_1$ from $a_2$ and also separates $b_1$ from $b_2$, so it is counted twice by $|P[a_1,b_2]|+|P[a_2,b_1]|$. If $e$ is counted exactly once by $|P[a_1,a_2]|+|P[b_1,b_2]|$, say by $|P[a_1,a_2]|$, then $e$ separates $a_1$ from $a_2$, while $b_1$ and $b_2$ lie on the same side of $e$; hence $e$ separates either $a_1$ from $b_2$ or $a_2$ from $b_1$, and so it is counted at least once by $|P[a_1,b_2]|+|P[a_2,b_1]|$. Thus the observation follows.

Since $A\subseteq V(P[a_1,a_2])$ and $B\subseteq V(P[b_1,b_2])$, we have $|P[a_1,a_2]|\ge |A|-1$ and $|P[b_1,b_2]|\ge |B|-1$. The observation thus gives $|P[a_1,b_2]|+|P[a_2,b_1]|\ge |A|+|B|-2$. It follows that at least one of $|P[a_1,b_2]|$ and $|P[a_2,b_1]|$ is at least $(|A|+|B|-2)/2=\frac{|A|+|B|}{2}-1$. Since $a_1,a_2\in A$ and $b_1,b_2\in B$, this gives the desired vertices $a\in A$ and $b\in B$.
\end{proof}

A set of edges is called \emph{independent} if no two of its edges share an endpoint. 
The following two lemmas are taken from Fan's treatment of the attachment argument \cite{Fan1990}.

\begin{lemma}[\rm {Fan \cite[Proposition~1]{Fan1990}}]\label{lem:22}
Let $C$ be a cycle in a graph $G$ and let $H$ be a component of $G-C$. If $H$ is locally $k$-connected to $C$ in $G$, then there are at least $\min\{k,|V(H)|\}$ pairwise independent edges between $C$ and $H$.
\end{lemma}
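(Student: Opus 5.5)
The statement asks for $\min\{k,|V(H)|\}$ pairwise independent edges between $C$ and $H$, i.e.\ a matching in the bipartite graph $B$ with parts $V(H)$ and $V(C)$ whose edges are the $G$-edges between them. I would apply K\"onig's theorem to $B$: the maximum matching size equals the minimum vertex cover size. So it suffices to show that every vertex cover $S=S_H\cup S_C$ of $B$, with $S_H\subseteq V(H)$ and $S_C\subseteq V(C)$, satisfies $|S|\ge \min\{k,|V(H)|\}$.

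Fix such a cover and suppose $|S|<k$ and $|S|<|V(H)|$. Then $S_H\neq V(H)$, so some vertex $x\in V(H)\setminus S_H$ exists. Since $H$ is locally $k$-connected to $C$, there are $k$ paths $P_1,\dots,P_k$ from $x$ to $k$ distinct vertices of $C$, pairwise meeting only at $x$.

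Truncate each $P_i$ at its first vertex on $C$. The initial part of $P_i$ before reaching $C$ lies in $G-V(C)$ and starts at $x\in V(H)$. Because $H$ is a component of $G-V(C)$, this initial part stays inside $H$. Hence each $P_i$ contains an edge $h_ic_i$ with $h_i\in V(H)$ and $c_i\in V(C)$, namely the edge by which it first enters $C$. The truncated paths still pairwise meet only in $x$.

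Now count how the cover can meet these edges. Each edge $h_ic_i$ is an edge of $B$, so $S$ contains $h_i$ or $c_i$. The endpoints $c_1,\dots,c_k$ are distinct. The vertices $h_i$ are distinct except that several may equal $x$, and $x\notin S$. Consequently each vertex of $S$ lies on at most one of the truncated paths. This gives $|S|\ge k$, a contradiction.

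Hence every vertex cover has size at least $\min\{k,|V(H)|\}$, and K\"onig's theorem yields the required independent edges. The only delicate point is choosing $x$ outside $S_H$, which is exactly what the condition $|S|<|V(H)|$ guarantees. It also explains the $\min$ with $|V(H)|$ in the statement: otherwise $S=V(H)$ could be a cover of size less than $k$.
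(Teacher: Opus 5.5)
Your proof is correct. The paper gives no proof of this lemma; it is quoted from Fan's Proposition~1, so there is no in-paper argument to compare yours with.

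Your K\"onig-based argument is a complete and self-contained proof of the cited fact. Suppose a vertex cover $S$ of the $H$--$C$ bipartite graph has fewer than $\min\{k,|V(H)|\}$ vertices. Then $S$ misses some $x\in V(H)$. Each of the $k$ paths from $x$, truncated at its first vertex on $C$, stays inside $H$ until its final edge. That final edge is an $H$--$C$ edge, and $S$ must cover it. Because the truncated paths share only the vertex $x\notin S$, these edges are covered by $k$ distinct vertices of $S$, a contradiction.

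One point of wording: after truncation, the endpoints $c_i$ are distinct because the truncated paths meet only in $x$ and $x\notin V(C)$. This does not follow from the original endpoints being distinct. You state that the truncated paths meet only in $x$, so the justification is already present in your argument.
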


The next lemma is obtained by combining Lemma~1(a), (d), and (e) with Lemma~3(a) of Fan~\cite{Fan1990}.
\begin{lemma}[Fan \cite{Fan1990}]\label{lem:23}
Let $C$ be a cycle in a graph $G$ and let $H$ be a component of $G-C$. 
Let $T=\{u_1,u_2,\dots,u_t\}$ be a maximal strong attachment of $H$ to $C$ such that the vertices $u_i$ appear on $C$ in cyclic order, where $t\ge 2$. Let $h:=|V(H)|$.
Then the following hold:
\begin{enumerate}[label=(\roman*),leftmargin=2.3em]
\item Every vertex in $N_C(H)\setminus T$ has exactly one neighbor in $H$.
\item If $C$ is locally longest with respect to $H$ in $G$, then
$|C|\ge \sum_{i=1}^t \dstar_H(u_i,u_{i+1}) + 2|N_C(H)\setminus T|.$
\item Let $D(T)=\{v\in T : d_H(v)\ge 2\}$. Then we have:
\begin{itemize}
\item If $H$ is locally $k$-connected to $C$ in $G$ and $|V(C)|\ge k$, then
$t\ge \min\{k, h+|D(T)|\}.$
\item If $H$ is $2$-connected and has average degree $\frac{1}{|V(H)|}\sum_{v\in V(H)}d_G(v)=r$, then for every strongly attached pair $\{u_i,u_{i+1}\}$,
$\dstar_H(u_i,u_{i+1})\ge r+2-t-\frac{|\NC(H)\setminus T|}{h}.$
\end{itemize}
\end{enumerate}
\end{lemma}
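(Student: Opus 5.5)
The statement bundles several facts from Fan~\cite{Fan1990}, and I would verify them in the order (i), (ii), then the two bullets of (iii), since each step uses the previous ones.

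For (i), I would argue by maximality of $T$. Suppose $v\in N_C(H)\setminus T$ has two distinct neighbours $z,z'\in V(H)$, and let $v$ lie in the interior of the segment $C[u_i,u_{i+1}]$. Since $u_i$ has some neighbour $w\in V(H)$, at least one of $z,z'$ differs from $w$, so $\{u_i,v\}$ is strongly attached. The same reasoning applies to $\{v,u_{i+1}\}$. Hence $T\cup\{v\}$, listed in cyclic order, is again a strong attachment, contradicting maximality. The only care needed is $t=2$, where $u_{i+1}=u_{i-1}$, but the argument is the same.

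For (ii), I would work one $T$-segment at a time. Let $u_i=w_0,w_1,\dots,w_m,w_{m+1}=u_{i+1}$ be the vertices of $N_C(H)$ on $C[u_i,u_{i+1}]$ in order. By (i), each interior $w_j$ has a unique $H$-neighbour $z_j$. The goal is
$$|C[u_i,u_{i+1}]|\ge \dstar_H(u_i,u_{i+1})+2m;$$
summing this over $i$ gives (ii). The ingredients are as follows.
\begin{itemize}
\item Local longestness gives $|C[w_j,w_{j+1}]|\ge \dstar_H(w_j,w_{j+1})\ge 2$.
\item Take a longest $(u_i,H,u_{i+1})$-path $P$. For each interior $w_j$, reroute $P$ via $z_j$ and a path inside the connected graph $H$ onto $P$. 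Comparing with the subpaths $C[u_i,w_j]$ and $C[w_j,u_{i+1}]$ through local longestness, this charges the length of $P$ against the two sides of $w_j$.
\end{itemize}
I expect this bookkeeping, which produces exactly $\dstar_H+2m$ rather than only $2(m+1)$, to be the main obstacle. I would try first an induction on $m$: peel off $w_m$, compare the rerouted path to the cycle obtained by replacing $C[w_{m},u_{i+1}]$ by an $H$-detour, and use $|C[w_m,u_{i+1}]|\ge 2$ together with the fact that $w_m$ contributes only one $H$-edge.

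For the first bullet of (iii), I would apply Lemma~\ref{lem:22} to obtain $s=\min\{k,h\}$ independent edges $x_1y_1,\dots,x_sy_s$ with $x_j\in V(C)$ and distinct $y_j\in V(H)$. Any two of the $x_j$ are strongly attached. Each $x_j\notin T$ with a unique $H$-neighbour could be inserted into $T$ by the argument of (i), unless it already lies in $T$. So $T$ must contain all of them, plus enough further vertices to account for those of $D(T)$ sharing $H$-neighbours. Counting the distinct $H$-neighbours realised by $T$ gives $t\ge\min\{k,h+|D(T)|\}$. Here $|V(C)|\ge k$ ensures there is room on $C$.

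For the second bullet, I would split $\sum_{v\in V(H)} d_G(v)=rh$ into edges inside $H$ and edges to $C$. Edges to $T$ contribute at most $t$ per vertex. Edges to $N_C(H)\setminus T$ contribute exactly $|N_C(H)\setminus T|$ in total, by (i). Hence $H$ has average degree at least $r-t-|N_C(H)\setminus T|/h$. I would then invoke Fan's Lemma~3(a): in a $2$-connected graph, two prescribed distinct vertices $z_i,z_{i+1}$ are joined by a path of length at least the average degree. I would apply it with $z_i,z_{i+1}$ the distinct $H$-neighbours witnessing the strong attachment of $\{u_i,u_{i+1}\}$. Adding the two edges $u_iz_i$ and $z_{i+1}u_{i+1}$ gives the stated bound $r+2-t-|N_C(H)\setminus T|/h$.
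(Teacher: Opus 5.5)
Your proof of (i) is correct, and in (ii) the per-segment target $|C[u_i,u_{i+1}]|\ge\dstar_H(u_i,u_{i+1})+2m$ is the right reduction. The genuine gaps are in both bullets of (iii).

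\textbf{First bullet of (iii).} You claim that each $x_j\notin T$ ``could be inserted into $T$ by the argument of (i)''. This is false, because that argument only works when the inserted vertex has two $H$-neighbours. Take $v\in\NC(H)\setminus T$ inside $C[u_i,u_{i+1}]$ with unique $H$-neighbour $z$. It can be inserted if and only if $N_H(u_i)\ne\{z\}$ and $N_H(u_{i+1})\ne\{z\}$, and this can fail. For example, let $H$ be an edge $ab$ and let $x,w,y$ appear in this order on $C$, with $N_H(x)=N_H(w)=\{a\}$, $N_H(y)=\{b\}$ and no other attachments. Then $H$ is locally $2$-connected to $C$ and $T=\{x,y\}$ is maximal. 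Yet $\{wa,yb\}$ is a legitimate choice of independent edges for Lemma~\ref{lem:22}, with $w\notin T$. So the independent edges need not end in $T$, and your count has nothing to stand on.

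What maximality does give is exactly the dichotomy above, and that is the right tool. Let $Z$ be the set of unique $H$-neighbours of the vertices of $T\setminus D(T)$, so $|Z|\le t-|D(T)|$. Then every edge between $H$ and $C$ either ends in $D(T)$ or starts in $Z$. Now take $x\in V(H)\setminus Z$. Each of the $k$ paths from $x$ to $C$ stays in $H$ until its first edge into $C$. Hence each contains a vertex of $D(T)\cup Z$ other than $x$, and these vertices are pairwise distinct, so $k\le t$. If no such $x$ exists, then $V(H)\subseteq Z$ and $h\le t-|D(T)|$. Either way $t\ge\min\{k,h+|D(T)|\}$.

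\textbf{Second bullet of (iii).} The tool you invoke is false as stated: two prescribed vertices of a $2$-connected graph need not be joined by a path as long as the average degree. In $K_{2,m}$ with $m\ge3$, the two vertices of degree $m$ are joined only by paths of length $2<4m/(m+2)$. Theorem~\ref{thm:fan-average} averages only over the vertices \emph{other than} the two ends, and this matters here. Let $H=K_{2,m}$, let both $u_1,u_2$ be adjacent to all of $H$, and let $\NC(H)=T=\{u_1,u_2\}$. If you pick as witnesses the two degree-$m$ vertices, you need a path between them of length at least $r-t=4m/(m+2)>2$, which does not exist.

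The repair is to keep $u_i,u_{i+1}$ as the prescribed ends. Apply Theorem~\ref{thm:fan-average} to $G[V(H)\cup\{u_i,u_{i+1}\}]+u_iu_{i+1}$. There the average is over exactly $V(H)$, and by (i) it is at least $r-\bigl((t-2)h+s\bigr)/h=r+2-t-s/h$, where $s:=|\NC(H)\setminus T|$. This is the argument the paper runs for Theorem~\ref{thm:25}, with Theorem~\ref{thm:24} in place of Theorem~\ref{thm:fan-average}.

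\textbf{A smaller point in (ii).} Your sketched induction bounds the last piece by $|C[w_m,u_{i+1}]|\ge 2$, which is the wrong estimate. What you need is
$\dstar_H(x,y)\le\dstar_H(x,w)+\dstar_H(w,y)-2$ for any $w\in\NC(H)\setminus\{x,y\}$; this is exactly your rerouting through an $H$-neighbour of $w$. Apply it successively with $w=w_m,\dots,w_1$ and combine with $|C[w_j,w_{j+1}]|\ge\dstar_H(w_j,w_{j+1})$; compare Case~1 in the proof of Theorem~\ref{thm:24}.
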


A graph is \emph{Hamiltonian-connected} if any two of its vertices are joined by a Hamilton path. 
The following theorem of Ore~\cite{Ore1963} provides a sufficient condition for Hamiltonian-connectivity.

\begin{theorem}[Ore~\cite{Ore1963}]\label{thm:ore-hc}
Let $G$ be a graph on $n\ge 4$ vertices. If $\delta(G)\ge 3$ and
$e(G)\ge \binom{n-1}{2}+2,$
then $G$ is Hamiltonian-connected except when $n=6$ and $G=K_3\vee 3K_1$.
\end{theorem}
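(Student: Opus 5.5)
The plan is to use the Bondy--Chv\'atal closure with threshold $n+1$. Recall the standard fact: if $u,v$ are nonadjacent with $d(u)+d(v)\ge n+1$, then $G$ is Hamiltonian-connected if and only if $G+uv$ is. Hence $G$ is Hamiltonian-connected if and only if its $(n+1)$-closure $H$ is. The closure is obtained by repeatedly adding such edges until none remain. Adding edges preserves both hypotheses: $\delta(H)\ge 3$ and $e(H)\ge \binom{n-1}{2}+2$. It therefore suffices to show that $H$ is complete, or else that $H=K_3\vee 3K_1$ with $n=6$.

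Suppose $H$ is not complete, and let $M$ be the graph of non-edges of $H$ on $V(H)$. The edge hypothesis gives
\[
|E(M)|=\binom{n}{2}-e(H)\le \binom{n}{2}-\binom{n-1}{2}-2=n-3 .
\]
Take any nonadjacent pair $u,v$ in $H$. The number of non-edges meeting $\{u,v\}$ is $(n-1-d(u))+(n-1-d(v))-1$, where we subtract $1$ because $uv$ is counted twice. This number is at most $|E(M)|\le n-3$, so $d_H(u)+d_H(v)\ge n$. On the other hand, $H$ is closed, so $d_H(u)+d_H(v)\le n$. Thus equality holds throughout. Consequently $|E(M)|=n-3$, and every edge of $M$ meets $\{u,v\}$.

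Since $uv$ was an arbitrary edge of $M$, any two edges of $M$ intersect. A graph with pairwise intersecting edges is either a star or a triangle, so there are two cases.
\begin{itemize}
\item If $M$ is a star with $n-3$ edges and centre $w$, then $d_H(w)=n-1-(n-3)=2<3$. This contradicts $\delta(H)\ge 3$; it covers $n=4$ as well, where $M$ is a single edge.
\item If $M$ is a triangle, then $n-3=3$, so $n=6$. Its three vertices form an independent set in $H$ that is completely joined to the other three vertices, so $H=K_3\vee 3K_1$.
\end{itemize}
In this last case $e(G)\ge \binom{5}{2}+2=12=e(K_3\vee 3K_1)$ and $G\subseteq H$, so $G=H$ is the stated exception.

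The step needing the most care is the bookkeeping in the middle paragraph. One must check that the inequality chain $n\le d(u)+d(v)\le n$ forces both $|E(M)|=n-3$ and the property that every non-edge meets $\{u,v\}$, for every non-edge $uv$ simultaneously. One must also note that $\delta\ge3$ is used for the closure $H$, which inherits it from $G$.
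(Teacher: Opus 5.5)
The paper gives no proof of this statement; it is quoted from Ore~\cite{Ore1963} and used only as a black box, so there is no argument of the paper's to match yours against.

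Your proof is correct and complete, granted the Bondy--Chv\'atal fact that Hamiltonian-connectedness is $(n+1)$-stable. That fact postdates Ore's paper, but it is a short crossing argument. Suppose $w_1w_2\cdots w_n$ is a Hamiltonian $(x,y)$-path in $G+uv$ with $uv=w_iw_{i+1}$. The index sets $\{j: uw_j\in E(G)\}$ and $\{j: vw_{j+1}\in E(G)\}$ both lie in $\{0,\dots,n\}\setminus\{i\}$ and have total size at least $n+1$, so they share some $j$. Reversing the segment from $w_{j+1}$ to $u$ (if $j<i-1$), or from $v$ to $w_j$ (if $j>i+1$), gives a Hamiltonian $(x,y)$-path of $G$.

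The core of your argument is sound. In the closed graph every non-edge has degree sum exactly $n$, which forces the non-edges to form an intersecting family of exactly $n-3$ pairs. The star/triangle split and the final step $G=H$ by edge counting are also correct.

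Your route gives something extra. The hypothesis $\delta\ge 3$ is used only to exclude the star case. Without it, the star case yields $H=K_2\vee(K_1\cup K_{n-3})$, which has exactly $\binom{n-1}{2}+2$ edges, so again $G=H$. Hence your argument proves Corollary~\ref{cor:ore-hc} in one pass, for $n\ge 4$. The paper instead derives that corollary by combining Ore's theorem with a separate argument for a vertex of degree at most two.
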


The following corollary follows immediately from Theorem~\ref{thm:ore-hc}.

\begin{corollary}\label{cor:ore-hc}
Let $G$ be a graph on $n\ge 5$ vertices. If
$e(G)\ge \binom{n-1}{2}+2,$
then $G$ is Hamiltonian-connected, except in the following two cases:
(i) $n=6$ and $G=K_3\vee 3K_1$;
(ii) $G=K_2\vee (K_1\cup K_{n-3})$.
\end{corollary}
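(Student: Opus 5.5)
The plan is to derive Corollary~\ref{cor:ore-hc} from Theorem~\ref{thm:ore-hc} by case analysis on $\delta(G)$. If $\delta(G)\ge 3$, Theorem~\ref{thm:ore-hc} applies directly, since $n\ge 5\ge 4$. It gives Hamiltonian-connectivity unless $n=6$ and $G=K_3\vee 3K_1$, which is exception~(i). So it remains to treat $\delta(G)\le 2$, and there I will show that the edge bound pins $G$ down to exception~(ii).

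Suppose some vertex $v$ has $d(v)\le 2$. Then $e(G-v)\ge \binom{n-1}{2}+2-d(v)\ge\binom{n-1}{2}$. Since $G-v$ has $n-1$ vertices, it has at most $\binom{n-1}{2}$ edges. Hence equality holds throughout: $d(v)=2$ and $G-v\cong K_{n-1}$. Let $N(v)=\{a,b\}$. Then $a,b$ are adjacent to every vertex, the remaining $n-3$ vertices form a clique, and $v$ is adjacent only to $a$ and $b$. This is exactly $G=K_2\vee(K_1\cup K_{n-3})$, giving exception~(ii).

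The only subtle point is the counting. It forces $\delta(G)\ge 2$, since a vertex of degree at most $1$ would require $e(G-v)>\binom{n-1}{2}$, which is impossible. It also forces the degree-$2$ case to be exactly the displayed graph. One may also note that this graph is genuinely not Hamiltonian-connected: no Hamilton $(a,b)$-path exists, because $v$ would have to be an internal vertex adjacent on the path to both $a$ and $b$, making the path just $a v b$, which is too short when $n\ge 5$. That remark is not needed for the statement as phrased, which only lists possible exceptions. Nothing here is a real obstacle; the proof is a routine edge count on top of Theorem~\ref{thm:ore-hc}.
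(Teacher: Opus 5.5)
Your proposal is correct and follows the paper's proof. Both apply Theorem~\ref{thm:ore-hc} when $\delta(G)\ge 3$, and otherwise use $\binom{n-1}{2}+2\le e(G)= d(x)+e(G-x)\le 2+\binom{n-1}{2}$ to force $d(x)=2$ and $G-x$ complete, which is exception~(ii); your check that this graph really fails to be Hamiltonian-connected is a harmless extra.
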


\begin{proof}
Suppose that $G$ is not Hamiltonian-connected. If $\delta(G)\ge 3$, then Theorem~\ref{thm:ore-hc} yields Case~(i). So we assume $\delta(G)\le 2$.
Thus, $G$ contains a vertex $x$ of degree at most two.
Then $\binom{n-1}{2}+2\leq e(G)\leq 2+e(G-\{x\})$, 
implying that $x$ has degree exactly two, and $G-\{x\}$ is a clique of size $n-1$. This yields Case~(ii).
\end{proof}

Fan~\cite{Fan1990} proved the following average-degree version of the Erd\H{o}s--Gallai theorem.

\begin{theorem}[Fan~\cite{Fan1990}]\label{thm:fan-average}
Let $G$ be a $2$-connected graph on $n\ge 3$ vertices and let $x,y\in V(G)$ be distinct. Then there exists an $(x,y)$-path in $G$ of length at least
$\sum\limits_{v\in V(G)\setminus\{x,y\}} d_G(v)/(n-2)
=\frac{2e(G)-d_G(x)-d_G(y)}{n-2}.$
\end{theorem}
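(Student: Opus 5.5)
We plan to argue by a minimal counterexample, inducting on $|V(G)|+|E(G)|$. Write $S(G)=\sum_{v\neq x,y} d_G(v)$ and $r(G)=S(G)/(n-2)$. We may assume $x$ and $y$ are not adjacent. Indeed, if $xy\in E(G)$ and $G-xy$ is still $2$-connected, then deleting $xy$ leaves $S$ unchanged, and an $(x,y)$-path in $G-xy$ is also one in $G$. If $xy\in E(G)$ and $G-xy$ is not $2$-connected, then $\{x,y\}$ is essentially a cut, and we fall into the separation case below.

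The base case $n=3$ is immediate. Here $G$ is a triangle or a path $x z y$, and the path $xzy$ has length $2\ge d(z)$.

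\textbf{Case 1: $G$ has a $2$-separation $\{a,b\}$.} Let $G_1,G_2$ be the two sides, each with the edge $ab$ added if needed, so that each $G_i^+$ is $2$-connected and smaller than $G$.
\begin{itemize}
\item If $x,y$ lie in the same side, say $G_1$, we apply induction to $G_1^+$ with $(x,y)$, and to $G_2^+$ with the pair $(a,b)$. We then replace the edge $ab$, if it is used, by a long $(a,b)$-path through $G_2$. If $ab$ is not used, we instead reroute to pass through $G_2$; here we must use that an $(x,y)$-path in $G_1^+$ can be forced through $a$ and $b$.
\item If $x$ and $y$ lie in different sides, we apply induction to $(x,a)$ and $(b,y)$-type pairs and concatenate. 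The choice between $a$ and $b$ is made by averaging, using Lemma~\ref{lem:21}-style reasoning.
\end{itemize}
In both subcases the key computation is that $S(G)$ splits as $S(G_1^+)+S(G_2^+)$ up to the corrections at $a$ and $b$. Since a mediant of two fractions is at most the larger one, the average over $G$ is at most a weighted combination of the averages of the pieces. We then check that the lengths add so as to beat this combination. This is the usual Bondy--Jackson-type gluing, adapted to averages.

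\textbf{Case 2: $G$ is $3$-connected.} We contract an edge $e=uv$ with $u,v\notin\{x,y\}$ such that $G/e$ remains $2$-connected; a $3$-connected graph has many such edges. Any $(x,y)$-path in $G/e$ lifts to an $(x,y)$-path in $G$ of at least the same length. The contraction removes one vertex and decreases $S$ by $2+2|N(u)\cap N(v)|$, plus degree drops at common neighbours. So it suffices to choose $e$ with
\[
2+2|N(u)\cap N(v)|\le r(G).
\]
If no such edge exists, then every edge inside $V(G)\setminus\{x,y\}$ lies in many triangles. In that situation we plan to find a long path directly: take a longest $(x,y)$-path $P$ and use Lemma~\ref{lem:21} on the neighbourhoods of an off-path vertex, or of the endpoints of a segment, to extend $P$. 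This contradicts the maximality of $P$ unless $|P|\ge r$.

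The main obstacle is the degree bookkeeping in Case~2. Contraction can lower the average, and the dense fallback needs a clean argument. If contraction proves unmanageable, the alternative we would try first is deleting a vertex $v\notin\{x,y\}$ of degree below $r$ when $G-v$ stays $2$-connected, which raises the average. This reduces to graphs in which every vertex whose deletion keeps $G$ $2$-connected has degree at least $r$. In that setting the classical minimum-degree Erdős--Gallai path argument, applied to the ear structure, should give the bound.
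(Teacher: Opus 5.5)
There is a genuine gap, and it sits in the residual part of Case~2.

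\textbf{Case 2.} Contracting $uv$ with $u,v\notin\{x,y\}$ keeps the average at least $r$ exactly when $2+|N(u)\cap N(v)|+|N(u)\cap N(v)\setminus\{x,y\}|\le r$. For the situation where no edge qualifies, you offer only a plan: extend a longest $(x,y)$-path via Lemma~\ref{lem:21}.

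Failure of the contraction condition gives only local density. It yields $|N(u)\cap N(v)|>(r-2)/2$, hence $d(u)>r/2$ for every $u\notin\{x,y\}$ with a neighbour outside $\{x,y\}$. Erd\H{o}s--Gallai-type minimum-degree arguments turn this into paths of length about $r/2$, not $r$.

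The residual class contains nearly extremal graphs. Take $s\ge 3$, $m$ large, and $K_s\vee\overline{K_m}$ with $x,y$ in the independent side. This graph is $3$-connected, and no edge satisfies your condition. Here $r=2s-\frac{s(s-1)}{s+m-2}$, the vertices off $\{x,y\}$ have minimum degree $s\approx r/2$, and the longest $(x,y)$-paths have length exactly $2s$, with slack $2s-r\to 0$. So this case needs a genuinely global averaging argument; it carries the whole difficulty of the theorem.

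Your fallback rests on a miscalculation. Deleting $v\notin\{x,y\}$ lowers $\sum_{w\ne x,y}d(w)$ by $d(v)+|N(v)\setminus\{x,y\}|$, because the neighbours of $v$ also lose degree. So the average is preserved only when $d(v)+|N(v)\setminus\{x,y\}|\le r$ (roughly $d(v)\le r/2$), not whenever $d(v)<r$. In the example above no vertex can be deleted, even though the independent vertices have degree $s<r$. The promised reduction to ``every removable vertex has degree at least $r$'' is therefore unavailable.

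\textbf{Case 1.} This case is also incomplete.

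In the same-side subcase, induction gives you \emph{some} long $(x,y)$-path in $G_1^+$, and nothing forces it through $ab$. What does work is the following. Take any $(x,y)$-path through $ab$; it exists by $2$-connectivity and has length at least $3$ when $x,y\notin\{a,b\}$. Splice in the $(a,b)$-path from $G_2^+$ to get length at least $r(G_2^+)+2$. Then use $d_{G_2}(a),d_{G_2}(b)\le n_2-2$ to get $\sum_{w\ne x,y}d_G(w)\le (n_1-2)r(G_1^+)+(n_2-2)\bigl(r(G_2^+)+2\bigr)$. The mediant remark alone is not enough, since $r(G)$ can exceed both $r(G_1^+)$ and $r(G_2^+)$: the degrees of $a,b$ into $G_2$ are counted by neither piece.

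In the different-sides subcase, an $(x,a)$-path and a $(b,y)$-path do not concatenate. An $(x,c)$-path from $G_1^+$ and a $(c,y)$-path from $G_2^+$ may both pass through the other separator vertex, or use the virtual edge. So the gluing needs an actual argument.

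\textbf{The missing idea.} The paper quotes this theorem from Fan without proof. Its proof of Theorem~\ref{thm:24}, which it says follows Fan's induction, shows the idea that sidesteps all of this. Add $xy$ and take a longest cycle $C$ through $xy$. Then either delete a component $H$ of $G-V(C)$ (or contract an end-block of a non-$2$-connected one) and induct. Or, when that would lower the average, use the resulting lower bound on the degree sum of $H$ to get long $(u_i,H,u_{i+1})$-paths between consecutive vertices of a maximum strong attachment. Maximality of $C$ makes each segment $C[u_i,u_{i+1}]$ at least that long, and summing the segments bounds $|C|$. Dense configurations like $K_s\vee\overline{K_m}$ are absorbed by this global count over cycle segments, with no separate direct argument.
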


We often use the following variant of the above theorem of Fan.

\begin{lemma}\label{lem:24}
Let $C$ be a cycle in a graph $G$ and let $H$ be a 2-connected component of $G-C$. For any $x,y\in V(C)$, if $\{x,y\}$ is a strongly attached pair, then
$\dstar_H(x,y)\ge d(H)+\frac{e(\{x,y\},H)}{|V(H)|},$
where $e(\{x,y\},H)$ denotes the number of edges between $\{x,y\}$ and $H$, and
$d(H):=\sum_{v\in V(H)} |N_G(v)\cap V(H)|/|V(H)|.$
\end{lemma}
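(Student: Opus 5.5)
The plan is to apply Fan's average-degree theorem (Theorem~\ref{thm:fan-average}) to an auxiliary $2$-connected graph. This graph is obtained from $H$ by adding one new vertex $w$ that plays the role of both $x$ and $y$, or more precisely, by adding the two vertices $x,y$ themselves with their edges to $H$.

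Since $\{x,y\}$ is strongly attached, there are distinct $z_1,z_2\in V(H)$ with $xz_1,yz_2\in E(G)$. Let $H'$ be the graph on $V(H)\cup\{x,y\}$ consisting of $H$ together with all edges of $G$ between $\{x,y\}$ and $V(H)$. We omit the edge $xy$ even if present, so that every $(x,y)$-path of length at least $2$ in $H'$ is an $(x,H,y)$-path.

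We first check that $H'$ is $2$-connected. $H$ is $2$-connected, and $x$ and $y$ each have at least one neighbour in $H$, with distinct neighbours $z_1\ne z_2$ available. Deleting any single vertex of $H$ leaves $H$ connected, and $x,y$ still attach to it: if we delete $z_1$, then $x$ might become isolated when $d_H(x)=1$. So we cannot use $H'$ directly.

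The cleaner route is to add the single edge $xy$ and add the vertices $x,y$. This still fails if $d_H(x)=1$, since then $x$ has degree $2$ and deleting its $H$-neighbour leaves $x$ attached only via $y$. That is fine, because $y$ then still reaches $H-z_1$ through $z_2$. So let $H''=H'+xy$. Deleting any vertex of $H$ leaves $H$ minus that vertex connected, and $x,y$ still reach it: at least one of $z_1,z_2$ survives, and the other endpoint reaches it via the edge $xy$. Deleting $x$ or $y$ clearly leaves a connected graph. Hence $H''$ is $2$-connected with $|V(H'')|=|V(H)|+2\ge 3$.

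Apply Theorem~\ref{thm:fan-average} to $H''$ and the pair $x,y$. It gives an $(x,y)$-path of length at least
\[
\frac{\sum_{v\in V(H)} d_{H''}(v)}{|V(H)|}=\frac{\sum_{v\in V(H)}|N_G(v)\cap V(H)| + e(\{x,y\},H)}{|V(H)|}=d(H)+\frac{e(\{x,y\},H)}{|V(H)|}.
\]
Here $d_{H''}(v)$ for $v\in V(H)$ counts neighbours inside $H$ plus neighbours among $\{x,y\}$, and summing the latter over $V(H)$ gives exactly $e(\{x,y\},H)$.

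This bound is at least $d(H)\ge 2>1$ because $H$ is $2$-connected with at least $3$ vertices, so the path is not the added edge $xy$. Hence its interior lies in $V(H)$ and it uses only edges of $G$. It is therefore an $(x,H,y)$-path in $G$, and $\dstar_H(x,y)$ is at least the stated quantity.

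The only delicate step is the $2$-connectivity of the auxiliary graph, which is why the edge $xy$ is added and then shown not to be used. If $|V(H)|=2$ were allowed, $H$ would be $K_2$ and the same argument still works, since $d(H)=1$ and the bound $1+e/2$ exceeds $1$ when $e\ge2$.
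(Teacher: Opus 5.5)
Your proof is correct and is essentially the paper's argument: your graph $H''$ is exactly the paper's auxiliary graph, namely the subgraph of $G$ induced on $V(H)\cup\{x,y\}$ with the edge $xy$ added, and you apply Theorem~\ref{thm:fan-average} to it with the same degree count. You also make explicit two points the paper leaves implicit: that strong attachment of $\{x,y\}$ is what makes this graph $2$-connected, and that the resulting path has length at least $d(H)\ge 2$, so it is not the added edge $xy$. The opening detour through the graph without $xy$ can simply be dropped.
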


\begin{proof}
Let $H'$ be the induced subgraph of $G$ on $V(H)\cup\{x,y\}$, adding the edge $xy$ if it is not already present. Since $H$ is $2$-connected, so is $H'$. Then
$\sum_{v\in V(H')\setminus\{x,y\}} d_{H'}(v)
=\sum_{v\in V(H)} d_H(v)+e(\{x,y\},H).$
Applying Theorem~\ref{thm:fan-average} to $H'$ with distinct vertices $x$ and $y$ gives
$\dstar_H(x,y)
\ge \sum\limits_{v\in V(H')\setminus\{x,y\}} d_{H'}(v)/{|V(H)|}
= d(H)+\frac{e(\{x,y\},H)}{|V(H)|}.$
\end{proof}

We conclude this section with the following theorem of Bondy and Jackson, extending the classical Erd\H{o}s--Gallai theorem by permitting a small number of exceptions to the minimum-degree requirement.
In the next section, we prove an average-degree analogue of this result.

\begin{theorem}[Bondy and Jackson~\cite{BondyJackson1985}]\label{thm:bondy-jackson}
Let $G$ be a $2$-connected graph on $n\ge 4$ vertices. Let $x,y$ be distinct vertices and let $z\in V(G)\setminus\{x,y\}$. If every vertex $w\in V(G)\setminus\{x,y,z\}$ satisfies $d_G(w)\ge k$, then there exists an $(x,y)$-path of length at least $k$.
\end{theorem}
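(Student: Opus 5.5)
The plan is to prove the theorem by induction on $n$, in the spirit of the classical proofs of the Erd\H{o}s--Gallai path theorem. We may add the edge $xy$ if it is absent: this preserves $2$-connectivity, does not decrease any degree, and a longest $(x,y)$-path of length at least $k\ge 2$ never uses $xy$. We may also assume $k\ge 3$, since for $k\le 2$ the statement follows directly from $2$-connectivity. The argument splits according to whether $G$ has a \emph{reducible} $2$-cut.

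\textbf{Reducible $2$-cuts.} Suppose $\{a,b\}$ is a $2$-cut and $D$ is a component of $G-\{a,b\}$ with $x,y\notin D$.
\begin{itemize}
\item Choose such a $D$ minimal and set $G_D:=G[D\cup\{a,b\}]+ab$. This graph is $2$-connected, and every vertex of $D$ keeps its degree, because all its neighbours lie in $D\cup\{a,b\}$. The only possible exception to the degree bound inside $D$ is $z$.
\item If $|D|\ge 2$, induction applied to $G_D$ with terminals $a,b$ and exceptional vertex $z$ (or an arbitrary vertex of $D$ if $z\notin D$) gives an $(a,b)$-path $Q$ through $D$ of length at least $k$.
\item If $|D|=1$, then $D$ is a single vertex $w$. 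If $w\neq z$, it has degree at least $k$ while having only the two neighbours $a,b$, which is impossible for $k\ge 3$. So $w=z$, and we simply contract $z$ into the edge $ab$.
\item By $2$-connectivity (Menger), $G-D$ contains two disjoint paths joining $\{x,y\}$ to $\{a,b\}$. Concatenating these with $Q$ gives an $(x,y)$-path of length at least $k$.
\end{itemize}

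\textbf{$2$-cuts separating $x$ from $y$.} Suppose a $2$-cut $\{a,b\}$ separates $x$ from $y$. Split $G$ into the two sides, each augmented by the edge $ab$. Apply induction on each side with terminals $x$ and one of $a,b$. The choice of which of $a,b$ to use is the delicate bookkeeping: it should be the cut vertex not attached to the relevant degree-deficient vertex, and $z$ can lie on only one side. Gluing the two paths at the cut yields the required path.

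\textbf{Irreducible case.} It remains to treat the case where every $2$-cut contains $x$ or $y$, so $G$ is essentially $3$-connected away from the terminals. Take a longest $(x,y)$-path $P$ and suppose $|P|<k$.
\begin{itemize}
\item If some vertex $w\neq z$ lies off $P$, consider the component $R$ of $G-V(P)$ containing $w$. The high degree of $w$ and the irreducibility give at least three attachments of $R$ to $P$. A standard ``no two attachments consecutive on $P$'' argument then shows that $P$ has length at least $d(w)$-ish, and in any case at least $k$.
\item If all vertices off $P$ are just $z$, consider the penultimate-vertex rotations, in the style of P\'osa, of the path $P-x$ rooted at $y$. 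The endpoint neighbours of a longest path must lie on it, and each such endpoint has degree at least $k$ unless it equals $z$. Since at most one rotation endpoint is $z$, we can find an endpoint $v\ne z$ with all $\ge k$ neighbours on $P$. Reattaching $x$ through the $2$-connectivity at $x$ then gives length at least $k$.
\end{itemize}

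\textbf{Main obstacle.} I expect this final irreducible step to be the hardest. It requires a careful choice of which end to rotate, so that neither the terminal $x$ nor the exceptional vertex $z$ absorbs the degree count. If a direct argument fails, the fallback is to strengthen the induction hypothesis in the style of Lov\'asz, for instance by requiring the path to use a prescribed edge at $x$, so that the irreducible case is also reduced to smaller graphs.
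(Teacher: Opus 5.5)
\textbf{There is a genuine gap.} The paper does not prove this statement; it cites Bondy and Jackson. It is, however, an immediate corollary of the paper's Theorem~\ref{thm:24}, because the average of $d_G(w)$ over $V(G)\setminus\{x,y,z\}$ is at least $k$.

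\textbf{The $3$-connected case is only asserted.} Once you add $xy$, every $2$-cut has a component avoiding both $x$ and $y$. So your second case is vacuous, and your ``irreducible'' case is exactly that $G$ is $3$-connected. That case carries the whole difficulty, and your argument for it does not work:
\begin{itemize}
\item The ``no two attachments consecutive on $P$'' argument gives $|P|\ge 2(d(w)-1)\ge k$ only when the component $R$ of $G-V(P)$ containing $w$ is $\{w\}$ itself.
\item If $|R|\ge 2$, almost all neighbours of $w$ may lie inside $R$. Three attachments then only give a bound on $|P|$ that does not depend on $k$.
\item What is missing is a lower bound of order $k$ on the longest $(p_i,R,p_{i+1})$-paths between consecutive attachment vertices. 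Getting it means applying an inductive or averaging statement to $R$ plus two attachments. You must account for the degree each vertex of $R$ sends to the other attachments, and for $R$ possibly not being $2$-connected.
\end{itemize}
That is exactly the work in the proof of Theorem~\ref{thm:24}. It uses a longest $xy$-cycle $C$, contraction of components of $G-V(C)$, and maximum strong attachments with Lemma~\ref{lem:23}(i). It applies Theorem~\ref{thm:fan-average} to $G[V(H)\cup\{u_i,u_{i+1}\}]+u_iu_{i+1}$ and contracts end-blocks. A minimum-degree hypothesis does not make this machinery unnecessary.

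Your second bullet is also invalid as written: $P-x$ need not be a longest path, and reattaching $x$ at a rotated endpoint is unjustified. That sub-case is easy directly, though. Since $|P|\ge n-2$ and $k\le n-1$, the assumption $|P|<k$ forces every vertex outside $\{x,y,z\}$ to be universal. A Hamiltonian $(x,y)$-path is then easy to write down.

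\textbf{The reducible case also breaks in one configuration.} Suppose the minimal $D$ is $\{z\}$ with $N(z)=\{a,b\}$, $a,b\notin\{x,y\}$, and $ab\in E(G)$ already. Deleting $z$, or contracting it into $a$, lowers both $d(a)$ and $d(b)$ by one. You can then have two vertices of degree $k-1$ besides $x$ and $y$. Your induction hypothesis tolerates only one exceptional vertex, so it no longer applies. You would need either an $(x,y)$-path of length at least $k$ in $G-z$, or one of length at least $k-1$ through the edge $ab$. That is a strengthened statement you have neither formulated nor proved; your Lov\'asz-style fallback gestures at it but is not carried out. When $ab\notin E(G)$ the step is fine, except that $G-z+ab$ can have only three vertices when $n=4$, and you never check the base case.
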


\section{An average-degree version of the Bondy--Jackson theorem}\label{Sec:2.5}

In this section, we establish an average-degree version of Theorem~\ref{thm:bondy-jackson},
which will serve as a central tool in the proof of Theorem~\ref{thm:local-jung}.
Our proof follows Fan's induction argument for Theorem~\ref{thm:fan-average}, 
supplemented by a more involved structural analysis.

\begin{theorem}\label{thm:24}
Let $G$ be a $2$-connected graph on $n\ge 4$ vertices. Let $x,y,z\in V(G)$ be distinct. Then there exists an $(x,y)$-path of length at least
$r:=\frac{2e(G)-d_G(x)-d_G(y)-d_G(z)}{n-3}.$
\end{theorem}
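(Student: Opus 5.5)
We argue by induction on $n$, in the spirit of Fan's proof of Theorem~\ref{thm:fan-average}. Write $S:=\sum_{w\in V(G)\setminus\{x,y,z\}} d_G(w)=2e(G)-d_G(x)-d_G(y)-d_G(z)$, so $r=S/(n-3)$ is the average degree over $W:=V(G)\setminus\{x,y,z\}$.

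\textbf{Base cases and preliminary reductions.} For $n=4$ we have $W=\{w\}$, and $r=d_G(w)\le 3$. A direct check of the $2$-connected graphs on four vertices then gives an $(x,y)$-path of length at least $d_G(w)$. Adding the edge $xy$ changes neither $r$ nor the set of $(x,y)$-paths of length at least $2$, and $r\ge 2$ as soon as $n\ge 5$ in the nontrivial cases. So we may assume $xy\in E(G)$ whenever convenient.

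\textbf{Separating case.} Suppose first that $G$ has a $2$-cut $\{u,v\}$ separating $G$ into parts that are nontrivial with respect to $x,y$. If $x,y$ lie on the same side, let $G_1$ be that side together with $u,v$ and the added edge $uv$, and let $G_2$ be the other side together with $u,v$ and the edge $uv$. We apply induction to $G_1$ (with a suitable choice of the third exceptional vertex) and to $G_2$ with endpoints $u,v$. If the longest $(x,y)$-path in $G_1$ uses $uv$, we replace that edge by a long $(u,v)$-path in $G_2$. The averages are then combined by the mediant inequality: if $S=S_1+S_2$ and $n-3=m_1+m_2$ with $S_i\ge r_i m_i$ up to correction terms, then $\max(r_1,\,r_1+r_2-1)$ should be at least $S/(n-3)$.

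If instead $\{u,v\}$ separates $x$ from $y$, we concatenate an $(x,u)$-path with a $(u,y)$-path, or use $v$, again by induction on both sides.

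The placement of $z$ must be handled in each configuration. We put $z$ on the side where it lies, and on the other side we use $u$ or $v$ as the exceptional vertex, with degree loss absorbed by the $uv$ edge. Fan's argument loses a little on the $uv$ edge, so $z$'s allowance must compensate. I expect these bookkeeping inequalities, in particular when $z\in\{u,v\}$ or when $z$ is the unique vertex on one side, to be the main obstacle.

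\textbf{$3$-connected-type case.} Otherwise, pick a vertex $w\in W$ of small degree, or an edge incident to it, and contract or delete it in a way that preserves $2$-connectivity. Concretely, take an edge $e=wv$ with $w\in W$ such that $G/e$ is $2$-connected; this exists when no relevant $2$-cut exists. Apply induction to $G/e$ with the same $x,y,z$. The path lifts to a path in $G$ of length at least as large. On the average side, contracting loses at most $d_G(w)+(\text{common neighbours})$ from $S$ and one vertex from $n-3$. So choosing $w$ of degree at most $r$ gives $S'/(n-4)\ge r-1$, and the lifted path gains an edge whenever it passes through the contracted vertex.

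When the lift does not gain an edge, we instead use deletion of $w$, which keeps the average since $d(w)\le r$ and $w$'s neighbours lose one each. The argument proceeds by choosing between deletion and contraction, as Fan does, with the exceptional vertex $z$ adding extra slack. If deleting $w$ destroys $2$-connectivity, we fall back to the separating case.

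Throughout, the hardest step is making the induction close with exactly the denominator $n-3$ rather than $n-2$. This requires checking that every reduction either removes $z$ harmlessly or keeps it as the designated exceptional vertex. I would verify the small-side cases of the $2$-cut decomposition, where one side is a triangle or $K_4^-$, by hand, using Corollary~\ref{cor:ore-hc} to certify Hamiltonian-connected dense pieces.
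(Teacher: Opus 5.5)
Your plan has a genuine gap: the reduction that is supposed to carry the induction when there is no useful $2$-cut does not close.

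\textbf{Deletion.} The deletion step is false as stated. Deleting $w\in W$ lowers $S$ by $d_G(w)+|N_G(w)\cap W|$, which can be nearly $2d_G(w)$, while the denominator drops by only one. So the average is preserved only when roughly $d_G(w)\le r/2$, not $d_G(w)\le r$. Already for $G=K_n$, every $w\in W$ has $d_G(w)=n-1=r$, yet $G-w=K_{n-1}$ has average $n-2$. Induction on $G-w$ therefore returns only a path of length $n-2$.

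\textbf{Contraction.} The averaging side of contraction is fine. For $w,v\in W$ with $d_G(w)\le r$ one gets $S-S'=|N(w)\cap N(v)|+|N(w)\cap N(v)\cap W|+2\le (r-1)+(n-5)+2$, hence average at least $r-1$ in $G/e$. But the lifted path gains an edge only if it passes through the merged vertex and its two path-neighbours there attach to different ends of $e$. Induction hands you an arbitrary long path in $G/e$, which may avoid the merged vertex or use only edges at $w$. When it does not gain, your fallback is the false deletion step. So in the non-separable case nothing takes you from $r-1$ to $r$. ``Choosing between deletion and contraction, as Fan does'' is not an argument: Fan's proof of Theorem~\ref{thm:fan-average} does not proceed by single-vertex reductions.

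\textbf{Separating case.} This case is not in order either. The $(x,y)$-path that induction gives in $G_1+uv$ need not use $uv$, so it certifies only $r_1$, not $\max(r_1,r_1+r_2-1)$. When $r_1<r$ you must route through the other side directly: two disjoint $\{x,y\}$--$\{u,v\}$ paths in $G_1$ plus a long $(u,v)$-path in $G_2+uv$. Even then the mediant does not close automatically. If $z$ lies on the far side, the edges from $u$ and $v$ into that side (up to twice its size) must be charged to one fewer vertex than it has. The naive comparison is then off by an additive $2/(n-3)$, which integrality does not obviously absorb. This is exactly the bookkeeping you defer.

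\textbf{What the paper does instead.} It fixes a longest $xy$-cycle $C$ (with $xy$ added) and assumes $|C|<r+1$. It then removes a whole component $H$ of $G-V(C)$ not containing $z$, or contracts an end-block onto its cut-vertex, or contracts the component containing $z$ to a new exceptional vertex $\widehat z$. If induction on the smaller graph fails, that failure is itself an inequality saying the removed part has large average degree. Applying Theorem~\ref{thm:fan-average} (or the induction hypothesis) to $G[V(H)\cup\{u_i,u_{i+1}\}]+u_iu_{i+1}$, and summing the segments of $C$ between consecutive vertices of a strong attachment, then forces $|C|\ge r+1$. The residual case $V(H)=\{z\}$ is settled by direct degree counting. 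This two-sided use of a failed induction on an entire component is the idea missing from your plan.
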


\begin{proof}
We may add the edge $xy$ (if it is not in $E(G)$), since this does not change $r$. An $xy$-cycle means a cycle that contains the edge $xy$. Let $C$ be a longest $xy$-cycle, and put $c:=|C|$. We prove by induction on $n$ that $c\ge r+1$ (one can see that deleting $xy$ from such a cycle gives the required $(x,y)$-path). Assume, to the contrary, that
\begin{equation}\label{eq:24-assume}
 c<r+1.
\end{equation}

If $n=4$, then necessarily $c=3$. Writing $V(G)=\{x,y,z,u\}$ and taking a triangle $xyux$, we must have $N_G(z)=\{x,y\}$, since otherwise $c=4$. Thus, $G=K_4-e$, and then $r+1=3=c$. Hence, we may assume $n\ge 5$.
We may also assume $c\ge 4$: otherwise $G-\{x,y\}$ has no edges by $2$-connectedness, so $G=K_2\vee (n-2)K_1$, whence $r=2$ and $c=3$, contradicting~\eqref{eq:24-assume}.
Since $G$ has maximum degree at most $n-1$, it follows that $r\leq n-1$ and $c<n$.
Hence, $G-V(C)$ is not empty.

In the following, we call an $xy$-cycle \emph{good} if it has length at least $r+1$.
We divide the remaining part of the proof into three cases.
\begin{case}\label{Case:1}
There exists a component $H$ of $G-V(C)$ not containing $z$ such that $H$ is either $2$-connected, or an edge, or a single vertex.    
\end{case}

\noindent Let $h:=|V(H)|$ and let
$r_1:=\frac{\sum_{v\in V(H)} d_G(v)}{h}.$
Set $G_1:=G-H$. Then $G_1$ is $2$-connected, $|V(G_1)|\ge c\ge 4$, and
\begin{align*}
\sum_{v\in V(G_1)\setminus\{x,y,z\}} d_{G_1}(v)
&=\sum_{v\in V(G)\setminus\{x,y,z\}} d_G(v)-\sum_{v\in V(H)} d_G(v)-e(H,C-\{x,y,z\}) \\
&=(n-3)r-r_1h-e(H,C-\{x,y,z\}).
\end{align*}
If the right-hand side is at least $(n-h-3)r$, then the induction hypothesis applied to $G_1$ yields a good $xy$-cycle in $G_1$, and hence in $G$. Therefore, we may assume that
$(n-3)r-r_1h-e(H,C-\{x,y,z\})<(n-h-3)r,$
that is,
\begin{equation}\label{eq:24-case1-r1}
 r_1h>rh-e(H,C-\{x,y,z\}).
\end{equation}

Suppose first that $H$ is $2$-connected or an edge. Then $h\geq 2$. 
Let $T=\{u_1,u_2,\dots,u_t\}$ be a maximum strong attachment of $H$ to $C$, in cyclic order on $C$. Since $G$ is $2$-connected, we have $t\ge 2$ by Lemma \ref{lem:22}. Let $S:=\NC(H)\setminus T$ and $s:=|S|$. Choose an index $i$ such that the segment $C[u_i,u_{i+1}]$ does not contain the edge $xy$, and set
$G_i:=G[V(H)\cup \{u_i,u_{i+1}\}] + u_iu_{i+1}.$
Then $G_i$ is $2$-connected and $|V(G_i)|=h+2\ge 4$. By~\eqref{eq:24-case1-r1},
\begin{equation}\label{eq:temp-case1}
\sum_{v\in V(G_i)\setminus\{u_i,u_{i+1}\}} d_{G_i}(v)
= r_1h-e(H,C-\{u_i,u_{i+1}\})
> rh-e(H,C-\{x,y,z\})-e(H,C-\{u_i,u_{i+1}\}).
\end{equation}
Set $\theta:=|\{x,y,z\}\cap T|$ and $\sigma:=|\{x,y,z\}\cap S|$. Observe that each vertex of $T$ has at most $h$ neighbors in $H$, and each vertex of $S$ has exactly one neighbor in $H$ by Lemma~\ref{lem:23}(i).  This implies
$e(H,C-\{x,y,z\})\le (t-\theta)h+s-\sigma,$ and similarly, 
$e(H,C-\{u_i,u_{i+1}\})\le (t-2)h+s.$
Hence,
\[
\frac{\sum\limits_{v\in V(G_i)\setminus\{u_i,u_{i+1}\}} d_{G_i}(v)}{h}
> r-2t+2+\theta-\frac{2s-\sigma}{h}.
\]
By Theorem~\ref{thm:fan-average}, $G_i$ contains a $(u_i,u_{i+1})$-path of length greater than the right-hand side. 
This immediately gives a
$(u_i,H,u_{i+1})$-path in $G$ of length greater than the same quantity. Hence, whenever $C[u_i,u_{i+1}]$ avoids $xy$,
\begin{equation}\label{eq:24-case1-path}
 \dstar_H(u_i,u_{i+1})> r-2t+2+\theta-\frac{2s-\sigma}{h}.
\end{equation}

Let $p$ be the index such that the segment $C[u_p,u_{p+1}]$ contains the edge $xy$, and set
$\theta':=|\{x,y\}\cap T|=|\{x,y\}\cap\{u_p,u_{p+1}\}|.$
Put $S_p:=S\cap V(C[u_p,u_{p+1}])$ and $\phi:=|\{x,y\}\cap S_p|$. Then $\sigma\ge \phi$. 
The vertices of $\{u_p,u_{p+1}\}\cup S_p$ divide $C[u_p,u_{p+1}]$
into $|S_p|+1$ subsegments. By the maximality of the $xy$-cycle $C$,
every subsegment not containing the edge $xy$ has length at least $2$.
The subsegment containing $xy$ has length at least $3-\theta'-\phi$.
Hence,
\begin{equation}\label{eq:24-case1-short-seg}
|C[u_p,u_{p+1}]|\ge 3-\theta'+2|S_p|-\phi.
\end{equation}
Choose an arbitrary $q\in [t]\setminus\{p\}$. For every $k\notin\{p,q\}$,
\begin{equation}\label{eq:24-case1-three}
\dstar_H(u_k,u_{k+1})\ge 3.
\end{equation}
For every $k\neq p$, write $S\cap V(C[u_k,u_{k+1}])=\{s_1,\ldots,s_m\}$ in the order on
$C[u_k,u_{k+1}]$, and set $s_0=u_k$ and $s_{m+1}=u_{k+1}$. 
Since $C[u_k,u_{k+1}]$ avoids $xy$, the maximality of $C$
gives $|C[s_j,s_{j+1}]|\ge d_H^*(s_j,s_{j+1})$ for every $0\leq j\leq m$. For $1\le j\le m$, Lemma~\ref{lem:23}(i) gives that $s_j$ has a unique
neighbor in $H$. Hence
$d_H^*(s_0,s_{j+1})\le d_H^*(s_0,s_{j})+d_H^*(s_{j},s_{j+1})-2$ for every $1\leq j\leq m$. Applying this successively, we obtain
\[
|C[u_k,u_{k+1}]|
\ge d_H^*(u_k,u_{k+1})+2|S\cap V(C[u_k,u_{k+1}])|.
\]
Summing over $k\ne p$, and using the fact that each vertex of $S\setminus S_p$ is counted at least once, together with~\eqref{eq:24-case1-path}, \eqref{eq:24-case1-short-seg}, and~\eqref{eq:24-case1-three}, we get
\begin{align}\label{eq:24-case1-cycle-sum}
|C|
&\ge \sum_{k\notin\{p,q\}} \dstar_H(u_k,u_{k+1})
 + \dstar_H(u_q,u_{q+1}) + |C[u_p,u_{p+1}]| +2(s-|S_p|)  \\
&> 3(t-2)+\left(r-2t+2+\theta-\frac{2s-\sigma}{h}\right)+3-\theta'+2s-\phi \notag\\
&= r+t-1+\theta-\theta'+2s-\phi-\frac{2s-\sigma}{h} \ge r+1, \notag
\end{align}
a contradiction. The last inequality follows from $t\ge 2$, $\theta\ge \theta'$, and
\[
2s-\phi-\frac{2s-\sigma}{h}
=(2s-\phi)\left(1-\frac1h\right)+\frac{\sigma-\phi}{h}\ge 0.
\]

It remains to consider $h=1$, say $V(H)=\{u\}$. Write $\NC(H)=\{v_1,\dots,v_w\}$ in cyclic order on $C$, and set
$H_i:=G[V(H)\cup\{v_i,v_{i+1}\}]+v_iv_{i+1}$.
Then $H_i$ is $2$-connected, and the same calculation as in~\eqref{eq:temp-case1} gives the following:
\[
{\textstyle\sum_{v\in V(H_i)\setminus\{v_i,v_{i+1}\}}} d_{H_i}(v)
> rh-e(H,C-\{x,y,z\})-e(H,C-\{v_i,v_{i+1}\}).
\]
Set $\theta:=|\{x,y,z\}\cap\NC(H)|$. Since
$e(H,C-\{x,y,z\})\le w-\theta$ and
$e(H,C-\{v_i,v_{i+1}\})\le w-2$, we obtain
\[
\frac{{\textstyle\sum_{v\in V(H_i)\setminus\{v_i,v_{i+1}\}}} d_{H_i}(v)}{h}
>r-2w+2+\theta.
\]
Here $h=1$ and $H_i$ is a triangle, so the left-hand side equals $2$.
By~\eqref{eq:24-assume}, $r>c-1$. Moreover, all segments between
consecutive vertices of $\NC(H)$ have length at least $2$, except possibly
the one containing $xy$, which has length at least
$3-|\{x,y\}\cap\NC(H)|\ge 3-\theta$. Hence
$c\ge 2(w-1)+3-\theta=2w+1-\theta$, and therefore
\[
2>r-2w+2+\theta>(c-1)-2w+2+\theta\ge 2,
\]
a contradiction. This completes Case~\ref{Case:1}.

\begin{case}\label{Case:2}
There exists a component $H$ of $G-V(C)$ with $h=|V(H)|\ge 3$ which is not $2$-connected.    
\end{case}
Let $B$ be an end-block of $H$ with cut-vertex $b$ such that $z\notin V(B-b)$.
Set $B_0:=B-b$, $b_0:=|V(B_0)|$, and
\[
r_2:=\frac{\sum_{v\in V(B_0)} d_G(v)}{|B_0|}.
\]
Let $T:=\NC(B_0)$, let $t:=|T|$, and let $\theta:=|\{x,y,z\}\cap T|$. 
Let $G_1$ be obtained from $G$ by contracting $B$ onto its cut-vertex $b$
and deleting loops and parallel edges; equivalently, the new vertex $b$
has neighborhood $(N_G(b)\setminus N_B(b))\cup \NC(B_0)$.
Then $G_1$ is $2$-connected, $|V(G_1)|\ge 4$, and $x,y,z\in V(G_1)$.
We have
\begin{align*}
{\textstyle\sum_{v\in V(G_1)\setminus\{x,y,z\}}} d_{G_1}(v)
&\ge {\textstyle\sum_{v\in V(G)\setminus\{x,y,z\}}} d_G(v)
   -{\textstyle\sum_{v\in V(B_0)}} d_G(v)-e(B_0,b)-e(B_0,C-\{x,y,z\}) \\
&\ge (n-3)r-r_2b_0-b_0-(t-\theta)b_0 = (n-3)r-b_0(r_2+1+t-\theta).
\end{align*}
If $r\ge r_2+1+t-\theta$, then the right-hand side is at least
$(n-b_0-3)r$, so the induction hypothesis on $G_1$ gives a good
$xy$-cycle in $G_1$. If this cycle uses the contracted vertex $b$, replace
its subpath $a b a'$ by an $(a,B,a')$-path in $G$, where $a$ and $a'$ are
the two neighbors of $b$ on this cycle; this does not decrease its length.
Thus it lifts to a good $xy$-cycle in $G$, a contradiction. Hence we may assume
\begin{equation}\label{eq:24-case2-r}
 r<r_2+1+t-\theta.
\end{equation}

For any vertex $u_i\in T=\NC(B_0)$, let
$G_i:=G[V(B)\cup\{u_i\}] + u_ib.$
Then $G_i$ is $2$-connected. By~\eqref{eq:24-case2-r},
\begin{align*}
\frac{{\textstyle\sum_{v\in V(G_i)\setminus\{u_i,b\}}} d_{G_i}(v)}{|B_0|}
=\frac{{\textstyle\sum_{v\in V(B_0)}} d_G(v)-e(B_0,C-\{u_i\})}{b_0} \ge \frac{b_0r_2-b_0(t-1)}{b_0} = r_2-t+1 > r-2t+\theta.
\end{align*}
By Theorem~\ref{thm:fan-average}, $G_i$ contains a $(u_i,B_0,b)$-path in
$G_i$ of length greater than $r-2t+\theta$. 
Hence, there is a $(u_i,H,b)$-path in $G$ of length greater than $r-2t+\theta$.

\begin{claim}\label{cl:24-case2}
One of the following two alternatives holds.
\begin{enumerate}[label=(\alph*),leftmargin=2em]
\item There is a vertex $w\in \NC(H-B)$ with $T\setminus\{w\}\ne\emptyset$, and for every
$u_i\in T\setminus\{w\}$,
$\dstar_H(w,u_i)\ge r-2t+\theta+2.$
\item $T=\{u\}$ and $\NC(H-B)\subseteq\{u\}$, and there exists
$w\in N_C(b)\setminus\{u\}$ such that
$\dstar_H(u,w)\ge r-2+\theta+1.$
\end{enumerate}
\end{claim}

\begin{proof}
Since $G$ is $2$-connected and $H-B:=H-V(B)$ is non-empty, we have
$\NC(H-B)\ne\emptyset$.

If $t\ge 2$ or some vertex of $\NC(H-B)$ is not in $T$, choose $w\in \NC(H-B)$ with $T\setminus\{w\}\ne\emptyset$. Fix $u_i\in T\setminus\{w\}$, and let $P_i$ be the $(u_i,B_0,b)$-path of length greater than $r-2t+\theta$ obtained above. Choose a neighbor $w'\in V(H-B)$ of $w$ and an arbitrary $(b,H-B,w')$-path. Concatenating $P_i$, this path, and the edge $w'w$ gives an $(u_i,H,w)$-path of length at least $r-2t+\theta+2$. This proves~(a).

It remains to consider $T=\{u\}$, and therefore $\NC(H-B)\subseteq\{u\}$.
Since $G$ is $2$-connected, we have $|N_C(H)|\geq 2$.
Choose $w\in\NC(H)\setminus\{u\}$.
Since $T=\NC(B_0)=\{u\}$ and $\NC(H-B)\subseteq\{u\}$, we have $w\in N_C(b)\setminus\{u\}$.
Concatenating the $(u,B_0,b)$-path of length at least $r-2+\theta$ with $bw$ proves (b).
\end{proof}

First, suppose that Claim~\ref{cl:24-case2}(a) holds. Let
$T^+:=T\cup\{w\}$, arrange its vertices as $a_1,\ldots,a_m$ in cyclic order
on $C$, and let $\eta:=|\{x,y\}\cap T^+|$. 
Let $q$ be the index such that
$C[a_q,a_{q+1}]$ contains $xy$. 
Choose an index $p$ such that $C[a_p,a_{p+1}]$ does not contain the edge $xy$
and has endpoints $w$ and a vertex of $T\setminus\{w\}$.
The local maximality of $C$ gives $|C[a_j,a_{j+1}]|\ge 2$ for every $j\notin\{p,q\}$ and $|C[a_q,a_{q+1}]|\ge 3-\eta$.

If $w\in T$, then $m=t$, and Claim~\ref{cl:24-case2}(a) gives
$|C|\ge 2(t-2)+(r-2t+\theta+2)+3-\eta=r+1+\theta-\eta\ge r+1,$
where we use the fact $\eta=|\{x,y\}\cap T|\le \theta$. If $w\notin T$, then $m=t+1$, and similarly
$|C|\ge 2(t-1)+(r-2t+\theta+2)+3-\eta=r+3+\theta-\eta\ge r+1,$
where we use $\eta\le 2$. In both cases, this contradicts~\eqref{eq:24-assume}.

It remains to handle Claim~\ref{cl:24-case2}(b). Let $I$ be the $(u,w)$-segment of $C$ that avoids $xy$, and let $J$ be the $(w,u)$-segment. Set $\eta:=|\{x,y\}\cap\{u,w\}|$. By Claim~\ref{cl:24-case2}(b) and local maximality,
$|I|\ge r-2+\theta+1,$ and $|J|\ge 3-\eta$. Therefore
$|C|\ge r-2+\theta+1+3-\eta=r+2+\theta-\eta\ge r+1.$
For the last inequality, if $\eta=2$, then $u\in\{x,y\}$ and so $\theta\ge 1$; if $\eta\le 1$, it is immediate. This contradiction
completes Case~\ref{Case:2}.

Assume that neither Case~\ref{Case:1} nor Case~\ref{Case:2} occurs. The remaining case is the following.
\begin{case}\label{Case:3}
There is a unique component $H$ of $G-V(C)$; it contains $z$, and is either
$2$-connected, an edge, or a single vertex.    
\end{case}
Assume first that $h:=|V(H)|\ge 2$. Then $H$ is $2$-connected or an edge. Let
\[
r'_1:=\frac{\sum_{v\in V(H)\setminus\{z\}} d_G(v)}{h-1}.
\]
Let $\widehat G$ be obtained from $G$ by contracting $H$ to a single vertex $\widehat z$ and suppressing parallel edges. Then $\widehat G$ is $2$-connected. Put $\nu:=|\NC(H)\setminus\{x,y\}|$. Under the contraction, the edges between
$H$ and $C-\{x,y\}$ are suppressed to one edge for each vertex of
$\NC(H)\setminus\{x,y\}$. Hence,
\[
\sum\limits_{v\in V(\widehat G)\setminus\{x,y,\widehat z\}}d_{\widehat G}(v)
=(n-3)r-r'_1(h-1)-e(H,C-\{x,y\})+\nu.
\]

If this sum is at least $(n-h-2)r$, then the induction hypothesis applied
to $\widehat G$ gives a good $xy$-cycle in $\widehat G$. If it uses
$\widehat z$, let $a$ and $b$ be the two neighbors of $\widehat z$ on this
cycle, and replace the subpath $a\widehat z b$ by an $(a,H,b)$-path in
$G$; this does not decrease its length. Thus, every good $xy$-cycle in
$\widehat G$ lifts to a good $xy$-cycle in $G$, a contradiction. Hence
\begin{equation}\label{eq:24-case3-contract}
r'_1(h-1)>r(h-1)-e(H,C-\{x,y\})+\nu.
\end{equation}

Let $T=\{u_1,u_2,\dots,u_t\}$ be a maximum strong attachment of $H$ to $C$, in cyclic order on $C$. Let $S:=\NC(H)\setminus T$, $s:=|S|$, and $\theta:=|\{x,y\}\cap T|$. 
Since $G$ is 2-connected, we have $t\ge 2$. Choose an index $i$ such that $C[u_i,u_{i+1}]$ does not contain $xy$, and let $q$ be the index such that $C[u_q,u_{q+1}]$ contains $xy$. Set $G_i:=G[V(H)\cup\{u_i,u_{i+1}\}]+u_iu_{i+1}.$
Then $G_i$ is $2$-connected and $|V(G_i)|=h+2\ge 4$. Using~\eqref{eq:24-case3-contract},
\begin{align*}
\sum_{v\in V(G_i)\setminus\{u_i,u_{i+1},z\}} d_{G_i}(v)
&=r'_1(h-1)-e(H-\{z\},C-\{u_i,u_{i+1}\})\\
&>r(h-1)-e(H,C-\{x,y\})+\nu-e(H-\{z\},C-\{u_i,u_{i+1}\}).
\end{align*}
We claim that
\begin{equation}\label{eq:24-case3-combined-edgecount}
e(H,C-\{x,y\})-\nu+e(H-\{z\},C-\{u_i,u_{i+1}\})
\le (2t-\theta-2)(h-1)+s.
\end{equation}

To prove this, sum the contributions over vertices of $\NC(H)$. For
$v\in\NC(H)$, the contribution of $v$ to the left-hand side
of~\eqref{eq:24-case3-combined-edgecount} is
\[
\mathbf 1_{v\notin\{x,y\}}\cdot(|N_H(v)|-1)+\mathbf 1_{v\notin \{u_i,u_{i+1}\}}\cdot|N_{H-\{z\}}(v)|.
\]
If $v\in S$, then $|N_H(v)|=1$ by Lemma~\ref{lem:23}(i), and this contribution
is at most $1$. If $v\in T\setminus\{u_i,u_{i+1},x,y\}$, it is at most
$2(h-1)$; if $v\in (\{u_i,u_{i+1}\}\triangle\{x,y\})\cap T$, it is at most $h-1$; and
if $v\in \{u_i,u_{i+1}\}\cap\{x,y\}$, it is $0$. 
Since
$2|T\setminus(\{u_i,u_{i+1}\}\cup\{x,y\})|+|(\{u_i,u_{i+1}\}\triangle\{x,y\})\cap T|=2t-\theta-2$,
summing these contributions gives~\eqref{eq:24-case3-combined-edgecount}.

It follows that
\[
\frac{{\textstyle\sum\limits_{v\in V(G_i)\setminus\{u_i,u_{i+1},z\}}} d_{G_i}(v)}{h-1}
>r-2t+\theta+2-\frac{s}{h-1}.
\]
By the induction hypothesis on $G_i$, there is a $(u_i,u_{i+1})$-path in
$G_i$ of length greater than the right-hand side, and hence a
$(u_i,H,u_{i+1})$-path in $G$ of length greater than the same quantity.
The maximality of $C$ gives
\begin{equation*}\label{eq:24-case3-local-path}
\dstar_H(u_i,u_{i+1})>r-2t+\theta+2-\frac{s}{h-1}.
\end{equation*}

Put $S_q:=S\cap V(C[u_q,u_{q+1}])$ and
$\phi:=|\{x,y\}\cap S_q|$. The same argument as for~\eqref{eq:24-case1-short-seg} gives
\[
|C[u_q,u_{q+1}]|\ge 3-\theta+2|S_q|-\phi.
\]
Using the same segment summation as in~\eqref{eq:24-case1-cycle-sum}, we obtain
\begin{align*}
|C|
&\ge \sum_{k\notin\{i,q\}}\dstar_H(u_k,u_{k+1})
   +\dstar_H(u_i,u_{i+1})+|C[u_q,u_{q+1}]|+2(s-|S_q|)\\
&>3(t-2)+\left(r-2t+\theta+2-\frac{s}{h-1}\right)
  +3-\theta+2s-\phi\\
&=(r+1)+(t-2)+(2s-\frac{s}{h-1}-\phi).
\end{align*}
The last expression is at least $r+1$, since
$t\ge 2$, and
$2s-\frac{s}{h-1}\ge s\geq \phi$ for all $h\ge 2$.
This contradicts~\eqref{eq:24-assume} and completes the subcase $h\ge 2$.

Finally, let $h=1$. Then $V(H)=\{z\}$ and $c=n-1$. Write
$U:=\NC(z)=\{u_1,u_2,\dots,u_d\}$ in cyclic order on $C$. Since $G$ is
$2$-connected, $d\ge 2$. It remains to prove $r\le c-1=n-2$, which is
equivalent to
\begin{equation}\label{eq:24-h1-target}
\sum_{v\in V(G)\setminus\{x,y,z\}}d(v)\le (n-3)(n-2).
\end{equation}

Assume first that either $d\ge 3$, or $d=2$ and $U\cap\{x,y\}\ne\emptyset$. Choose an orientation of $C$ such that, when
$\{x,y\}\nsubseteq U$, no edge $u^-u$ with $u\in U$ equals $xy$ (if
$\{x,y\}\subseteq U$, the orientation is arbitrary), where $v^-$ and
$v^+$ denote the predecessor and successor of $v$ in this orientation.
Let $P:=\{u\in V(C):u^+\in U,\ uu^+\ne xy\}$ and set
$m:=|P|=d-\mathbf 1_{\{x,y\}\subseteq U}$.
By definition, every $p\in P$ has $p^+\in U$, and the edge $pz$ cannot belong to $G$, as otherwise replacing the edge $pp^+$ of $C$ by the path $pzp^+$ gives an $xy$-cycle of length $n$.
The same replacement argument also gives $U\cap P=\emptyset$.
Moreover, $P$ is independent: if $p_1p_2\in E(G)$ for $p_1,p_2\in P$, then replacing the two edges $p_1p_1^+$ and $p_2p_2^+$
of $C$ by $p_1p_2$ and the path $p_1^+zp_2^+$ gives an $xy$-cycle of length $n$.
Hence every $p\in P$ has degree at most $n-m-1$ in $G$.

Let $a:=|U\cap\{x,y\}|$ and $q_0:=|P\cap\{x,y\}|$.
By assumption, if $d=2$, then $a\geq 1$.
We estimate the degree-sum in $V(G)\setminus\{x,y,z\}$ by separating the vertices in $U\setminus\{x,y\}$, $P\setminus\{x,y\}$, and the remaining vertices. These three classes have sizes $d-a$,
$m-q_0$, and $n-3-(d-a)-(m-q_0)$, respectively, and their degrees are at
most $n-1$, $n-m-1$, and $n-2$, respectively.
Hence
\begin{align*}
\sum_{v\in V(G)\setminus\{x,y,z\}}d(v)
&\le (d-a)(n-1)+(m-q_0)(n-m-1)+\bigl(n-3-(d-a)-(m-q_0)\bigr)(n-2)\\
&=(n-3)(n-2)+(d-a)-(m-q_0)(m-1).
\end{align*}
If $m=d$, then our orientation gives $q_0\le 1$, so $(m-q_0)(m-1)\ge (d-1)^2\ge d-a$ (recall $a\geq 1$ when $d=2$). If $m=d-1$, then $a=2$ and $q_0=0$, so $(m-q_0)(m-1)=(d-1)(d-2)\ge d-2=d-a$. Thus, \eqref{eq:24-h1-target} holds for $d\ge 3$, and for
$d=2$ with $U\cap\{x,y\}\ne\emptyset$.

It remains to assume $d=2$ and $U\cap\{x,y\}=\emptyset$, say
$U=\{u_1,u_2\}$. Let $Q$ be the $(u_1,u_2)$-segment of $C$ containing
the edge $xy$, and let $R$ be the other $(u_1,u_2)$-segment. Then
$|R|\ge 2$. Let $p_R$ and $q_R$ be the neighbors of $u_1$ and $u_2$ on
$R$, respectively, and let $s_Q$ and $t_Q$ be the neighbors of $u_1$ and
$u_2$ on $Q$, respectively.

Neither $p_Rt_Q$ nor $q_Rs_Q$ is an edge of $G$. Indeed, if
$p_Rt_Q\in E(G)$, then replacing the edges $u_1p_R$ and $u_2t_Q$ of $C$
by the paths $u_1zu_2$ and $p_Rt_Q$ gives an $xy$-cycle of length $n$;
the proof for $q_Rs_Q$ is symmetric. Since $U\cap\{x,y\}=\emptyset$, the
vertices $u_1,u_2,p_R,q_R$ all belong to $V(G)\setminus\{x,y,z\}$.
Among the $n-3$ vertices counted in~\eqref{eq:24-h1-target}, only
$u_1$ and $u_2$ may be adjacent to $z$. Moreover, the non-edges $p_Rt_Q$ and $q_Rs_Q$ reduce the total
contribution of $p_R$ and $q_R$ by 2; when $p_R=q_R$, this is still
true because the edge $xy$ lies on $Q$, so $s_Q\ne t_Q$.
Thus
\[
\sum_{v\in V(G)\setminus\{x,y,z\}} d_G(v)
\le 2(n-1)+(n-5)(n-2)-2=(n-3)(n-2).
\]
This proves~\eqref{eq:24-h1-target}, and hence completes the proof of
Theorem~\ref{thm:24}.
\end{proof}

The same attachment argument, with Theorem~\ref{thm:24} in place of the bound in Theorem~\ref{thm:fan-average}, gives the following useful variant of Lemma~\ref{lem:23}(iii).

\begin{theorem}\label{thm:25}
Let $G$ be a graph, let $C$ be a cycle of $G$, and let $H$ be a component of $G-V(C)$. Assume that $H$ is $2$-connected and that $C$ is locally longest with respect to $H$. Let $T=\{u_1,\dots,u_t\}$ be a maximum strong attachment of $H$ to $C$, let $s:=|\NC(H)\setminus T|$, let $v\in V(H)$, and set $h:=|V(H)|$ and
\[
r:=\frac{\sum_{x\in V(H)\setminus\{v\}} d_G(x)}{h-1}.
\]
Then for every strongly attached pair $\{u_i,u_{i+1}\}$,
$\dstar_H(u_i,u_{i+1})\ge r+2-t-\frac{s}{h-1}.$
\end{theorem}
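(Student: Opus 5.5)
Fix a strongly attached pair $\{u_i,u_{i+1}\}$ and work in the auxiliary graph
\[
G_i:=G[V(H)\cup\{u_i,u_{i+1}\}]+u_iu_{i+1}.
\]
Since $H$ is $2$-connected and $u_i,u_{i+1}$ have distinct neighbours in $H$, the graph $G_i$ is $2$-connected. Also $|V(G_i)|=h+2\ge 4$, because $H$ is $2$-connected, so $h\ge 3$. I will apply Theorem~\ref{thm:24} to $G_i$ with $x=u_i$, $y=u_{i+1}$ and $z=v$. This yields a $(u_i,u_{i+1})$-path in $G_i$ of length at least
\[
\frac{\sum_{w\in V(G_i)\setminus\{u_i,u_{i+1},v\}} d_{G_i}(w)}{h-1}.
\]
Such a path cannot use the added edge $u_iu_{i+1}$ unless it is that edge, and its length exceeds $1$ in our regime, so the path is a $(u_i,H,u_{i+1})$-path in $G$. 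Hence this quotient is a lower bound for $\dstar_H(u_i,u_{i+1})$.

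It remains to bound the numerator. For $w\in V(H)\setminus\{v\}$ we have
\[
d_{G_i}(w)=d_G(w)-|N_G(w)\cap(V(C)\setminus\{u_i,u_{i+1}\})|,
\]
because $H$ is a component of $G-V(C)$. Therefore the numerator equals
\[
(h-1)r-e\bigl(H-v,\;C-\{u_i,u_{i+1}\}\bigr).
\]
To control this edge count I will use Lemma~\ref{lem:23}(i): every vertex of $\NC(H)\setminus T$ has exactly one neighbour in $H$, so these vertices contribute at most $s$ in total. Each vertex of $T\setminus\{u_i,u_{i+1}\}$ contributes at most $|V(H-v)|=h-1$. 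Hence
\[
e\bigl(H-v,C-\{u_i,u_{i+1}\}\bigr)\le (t-2)(h-1)+s .
\]
Dividing by $h-1$ gives
\[
\dstar_H(u_i,u_{i+1})\ge r-(t-2)-\frac{s}{h-1}=r+2-t-\frac{s}{h-1},
\]
which is the claimed bound.

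The main point needing care is the bookkeeping that makes the loss exactly $t-2$ rather than $t$. The two endpoints $u_i,u_{i+1}$ are not counted as lost edges, because the formula of Theorem~\ref{thm:24} already excludes $x,y$ from the degree sum; their edges to $H$ then only increase the degrees inside $G_i$ and are simply dropped. Likewise the vertex $v$ and its edges to $C$ are excluded from both sides, so the choice of $v$ does not affect the estimate; this is precisely why $r$ is defined with $v$ removed.

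There is also a degenerate case: the target bound might be at most $1$, so the path produced could be just the added edge. In that case the bound holds trivially, since the pair is strongly attached and so a $(u_i,H,u_{i+1})$-path of length at least $2$ exists. Note that local longestness of $C$ and maximality of $T$ are used only through Lemma~\ref{lem:23}(i).
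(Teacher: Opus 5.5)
Your proof is correct and is essentially the paper's proof: apply Theorem~\ref{thm:24} to $G_i=G[V(H)\cup\{u_i,u_{i+1}\}]+u_iu_{i+1}$ with distinguished vertices $u_i,u_{i+1},v$, and bound $e(H-v,C-\{u_i,u_{i+1}\})\le (t-2)(h-1)+s$ via Lemma~\ref{lem:23}(i). Your handling of the case where the produced path is the added edge $u_iu_{i+1}$ fills in a detail the paper leaves implicit. One small correction: Lemma~\ref{lem:23}(i) needs only the maximality of $T$, not local longestness of $C$.
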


\begin{proof}
Fix $i$ and let
$G_i:=G[V(H)\cup\{u_i,u_{i+1}\}] + u_iu_{i+1}.$
Then $G_i$ is $2$-connected and has $h+2$ vertices. Apply Theorem~\ref{thm:24} to $G_i$ with distinguished vertices $u_i,u_{i+1},v$. The relevant average-degree term equals
\begin{align*}
\frac{\sum\limits_{x\in V(G_i)\setminus\{u_i,u_{i+1},v\}} d_{G_i}(x)}{h-1}
&=\frac{\sum\limits_{x\in V(H)\setminus\{v\}} d_G(x)-e(V(H)\setminus\{v\},C-\{u_i,u_{i+1}\})}{h-1} \\
&\ge r-\frac{(h-1)(t-2)+s}{h-1}= r+2-t-\frac{s}{h-1},
\end{align*}
where the inequality follows from Lemma~\ref{lem:23}(i). 
By Theorem~\ref{thm:24}, this gives a $(u_i,u_{i+1})$-path in $G_i$,
and thus a $(u_i,H,u_{i+1})$-path in $G$ of length at least
$r+2-t-\frac{s}{h-1}$, proving the desired bound.
\end{proof}

\section{A lemma on DFS-trees}\label{Sec:3}

A {\it DFS-tree} is a spanning tree of a given graph obtained by applying depth-first search to the graph. 
Let $H$ be a connected graph and let $T$ be a DFS-tree of $H$ rooted at $r$. For each
vertex $u$, let $T_u:=T[u,r]$ be the unique $(u,r)$-path in $T$. For $x,y\in V(T)$, if $y\in V(T_x)$, we write $y\le_T x$; if $x\ne y$, we
also write $y<_T x$. Two vertices $x,y\in V(T)$ are called \emph{comparable} if either
$x\le_T y$ or $y\le_T x$, and \emph{incomparable} otherwise. We shall use
the following standard property of DFS-trees.

\begin{proposition}\label{prop:31}
Let $H$ be a connected graph and let $T$ be a DFS-tree of $H$. For every edge $xy\in E(H)$, the vertices $x$ and $y$ are comparable in $T$.
\end{proposition}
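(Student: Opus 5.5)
The plan is to argue directly from how depth-first search builds $T$. Fix an edge $xy\in E(H)$. By symmetry, assume the search discovers $x$ before $y$. I will show that $y$ is a descendant of $x$, meaning $x\in V(T_y)$, so $x\le_T y$.

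The first step is to recall the recursive structure of DFS. When a vertex $x$ is discovered, the search explores from $x$ and finishes $x$ only after every neighbor of $x$ has been discovered. Every vertex discovered between the discovery of $x$ and the finishing of $x$ is added to the subtree rooted at $x$, and hence is a descendant of $x$ in $T$. This is the standard nesting (parenthesis) property. It follows by a short induction on the order in which vertices are discovered: each newly discovered vertex becomes the child of the vertex currently on top of the stack, and while $x$ is unfinished, that vertex is $x$ or a descendant of $x$.

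The second step applies this to the edge. Since $y$ is a neighbor of $x$, the vertex $x$ cannot be finished while $y$ remains undiscovered. We assumed $y$ is discovered after $x$, so $y$ is discovered during the active interval of $x$. By the nesting property, $y$ lies in the subtree of $x$, so $x$ lies on the $(y,r)$-path $T_y$. This gives $x\le_T y$, so $x$ and $y$ are comparable.

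The only point needing care is making the nesting property precise; everything else is immediate. This is routine, and since the paper labels the proposition a standard property of DFS-trees, a citation-level justification along these lines should suffice.
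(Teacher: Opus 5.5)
Your proof is correct: it is the standard parenthesis (nesting) argument for depth-first search, and you use the notation correctly, since $y$ being a descendant of $x$ means $x\in V(T_y)$, i.e.\ $x\le_T y$. The paper gives no proof and simply invokes the statement as a standard property of DFS-trees, so your argument supplies exactly the justification it takes for granted.
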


We use the following convention for choosing a DFS-tree. For a path
$P=v_1v_2\cdots v_p$ in a connected graph $H$, we root the DFS-tree at
$v_1$ and, whenever $v_i$ is explored for some $i<p$, visit $v_{i+1}$
before all other unvisited neighbors. Then $P$ is an initial root path
of the resulting DFS-tree. In particular, $v_1,\ldots,v_p$ are the first
$p$ vertices in the DFS order, and every vertex of $P$ with degree at least three in $T$ has a child outside $V(P)$.

The main result of this section is Lemma~\ref{lem:31}.  It gives a set $U\subseteq V(H)$ of size $p(H)$ whose degree sum is small. 
We first describe the construction of $U$.

\begin{definition}\label{def:dfs-construction}
Let $P=v_1v_2\cdots v_p$ be a longest path in $H$. Choose a DFS-tree $T$ rooted at $r=v_1$ such that the vertices $v_1,v_2,\dots,v_p$ are the first $p$ vertices in the DFS order. 
We define an injection\footnote{The map is indeed injective. In (R1), the chosen vertices $u_i$ belong to vertex-disjoint paths $T[v_i,u_i]$. In (R2), the incomparability of $U_1\cup W$ implies that the leaves $u_i$ are pairwise distinct and disjoint from $U_1$. Finally, $U_1\cup U_2\subseteq V(H-P)$, while $U_3\subseteq V(P)$.} $f:V(P)\to V(H)$ as follows.

\begin{enumerate}[label=(R\arabic*),leftmargin=2.5em]
\item Let
\(I_1:=\{i\in [p]: \text{the vertex } v_i \text{ has degree at least three in the tree } T\}.
\)
For each $i\in I_1$, choose an arbitrary child $x_i$ of $v_i$ in $T$ with
$x_i\notin V(P)$, and choose an arbitrary leaf $u_i$ of $T$ such that
$x_i\le_T u_i$. Define $f(v_i)=u_i$, and let $U_1:=\{u_i:i\in I_1\}.$

\item Choose a set $I_2\subseteq [p]\setminus I_1$ and pairwise distinct
vertices $w_i$ for $i\in I_2$, and set $W=\{w_i:i\in I_2\}$, with
$w_i\in N_H(v_i)\setminus (V(P)\cup U_1)$
for each $i\in I_2,$
under the following conditions:
\begin{enumerate}[label=(\roman*),leftmargin=2em]
\item any two distinct vertices in $U_1\cup W$ are incomparable in $T$;
\item subject to (i), $|I_2|$ is maximum;
\item subject to (i) and (ii), the rearranged non-decreasing sequence of the numbers
$|T[r,w_i]|$, where $i\in I_2$, is lexicographically maximum.
\end{enumerate}

After fixing $I_2$ and $W$, for each $i\in I_2$, choose an arbitrary leaf
$u_i\in V(H-P)$ of $T$ with $w_i\le_T u_i$, so that
$\sum_{i\in I_2}|T[w_i,u_i]|$ is maximized. Let
$U_2=\{u_i:i\in I_2\}$. For each $i\in I_2$, define $f(v_i)=u_i$.

\item For each $i\in I_3:=[p]\setminus (I_1\cup I_2)$, define $f(v_i)=v_i$, and let $U_3:=\{v_i:i\in I_3\}.$
\end{enumerate}

Let $U:=\{f(v_i): i\in [p]\}$, and for convenience write $u_i:=f(v_i)$ for each $i\in [p]$.
\end{definition}

The next notation defines the paths by which the vertices in $U_1\cup U_2$
return to $P$.

\begin{definition}\label{def:dfs-paths}
For each $j\in I_1\cup I_2$, let $\ell_j\in [p]$ be the maximum index such that
$v_{\ell_j}\in V(T_{u_j})$. Set
$Q_j:=T[v_{\ell_j},u_j],$ $\widehat Q_j:=Q_j-\{v_{\ell_j}\}.$
Define a collection of vertex-disjoint\footnote{To see the disjointness: For $j\in I_1$, we have $\ell_j=j$; if $x_j$ is the child chosen in (R1), then
$Q'_j=T[x_j,u_j]$, so these paths are pairwise vertex-disjoint. For distinct
$i,j\in I_2$, the vertices $w_i$ and $w_j$ are incomparable by (R2), so
$T[w_i,u_i]$ and $T[w_j,u_j]$ are disjoint. Finally, if
$a\in I_1$ and $b\in I_2$, then the incomparability of $w_b$ and
$u_a$ implies that $T[x_a,u_a]$ and $T[w_b,u_b]$ are disjoint.} paths by
\[
Q'_j:=
\begin{cases}
\widehat Q_j, & j\in I_1,\\
T[w_j,u_j], & j\in I_2.
\end{cases}
\]
\end{definition}

\begin{proposition}\label{prop:32}
There exist pairwise vertex-disjoint $(u_i,v_i)$-paths in $H$ for
all $i\in[p]$, each of which has no internal vertex in $V(P)$.
\end{proposition}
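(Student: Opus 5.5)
We construct the paths explicitly, one class at a time, following the three rules of Definition~\ref{def:dfs-construction}. For $i\in I_3$ we take the trivial path consisting of the single vertex $u_i=v_i$. It has no internal vertices, and it meets no other path provided all remaining paths avoid $U_3\subseteq V(P)$ except at their own endpoints $v_j$ with $j\notin I_3$.

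For $i\in I_1$ we use $R_i:=v_iQ'_i$, i.e.\ the tree path $T[v_i,u_i]=v_i x_i\cdots u_i$. Since $x_i\notin V(P)$ and $x_i\le_T u_i$, every vertex of $T[x_i,u_i]$ is a descendant of $x_i$. The subtree rooted at $x_i$ contains no vertex of $P$. To see this, note that $v_1,\dots,v_p$ are the first $p$ vertices in DFS order, so by the time $x_i$ is explored every vertex of $P$ has already been visited. The only ancestors of $x_i$ on $P$ are therefore $v_1,\dots,v_i$, and none of them lies below $x_i$. Hence $R_i$ is a $(u_i,v_i)$-path whose internal vertices avoid $V(P)$, and $\ell_i=i$.

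For $i\in I_2$ we use $R_i:=v_i w_i\,T[w_i,u_i]$, which uses the graph edge $v_iw_i\in E(H)$ followed by the tree path $Q'_i$. The vertex $w_i$ lies outside $V(P)$, and $T[w_i,u_i]$ consists of descendants of $w_i$. We must check that no vertex of $P$ is a descendant of $w_i$. Since $w_i\notin V(P)$ and $P$ is an initial root path, $w_i$ was visited after all of $P$, so the subtree at $w_i$ contains no vertex of $P$. Thus the internal vertices of $R_i$ (namely $w_i$ and the interior of $T[w_i,u_i]$) avoid $V(P)$.

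It remains to verify pairwise disjointness, and this is the main point. Each $R_j$ with $j\in I_1\cup I_2$ consists of $v_j$ together with $Q'_j$, and $Q'_j\subseteq V(H-P)$. The sets $Q'_j$ are pairwise vertex-disjoint by the footnote to Definition~\ref{def:dfs-paths}. Concretely, two such segments are disjoint because their top vertices ($x_a$ or $w_b$) are incomparable or lie in different child subtrees. Here the incomparability of $U_1\cup W$ from (R2)(i) handles the mixed case: if $w_b$ were a descendant of $x_a$, then $w_b$ would be comparable with the leaf $u_a$ or lie in a different branch, and we must exclude the overlap carefully. The endpoints $v_j$ are distinct for distinct $j$ and differ from every $v_i$ with $i\in I_3$. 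Since $Q'_j\cap V(P)=\emptyset$, the trivial paths of $I_3$ meet nothing else.

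The step I expect to need the most care is this mixed case $a\in I_1$, $b\in I_2$. The footnote asserts disjointness only from the incomparability of $w_b$ and $u_a$. I would argue it as follows. If $T[x_a,u_a]$ and $T[w_b,u_b]$ shared a vertex, the two paths would be comparable chains, so one of $x_a,w_b$ would be an ancestor of the other. If $x_a\le_T w_b$, then $w_b$ would lie on $T[x_a,u_a]$ and so be comparable with $u_a$, or $w_b$ would branch off that path, in which case $T[w_b,u_b]$ cannot meet $T[x_a,u_a]$. If instead $w_b<_T x_a$, then $w_b\le_T u_a$, again a comparability. Both outcomes contradict (R2)(i), which completes the proof.
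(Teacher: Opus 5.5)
Your proposal is correct and follows the paper's proof. You use the same paths: $R_i=Q_i$ for $i\in I_1$, the edge $v_iw_i$ followed by $T[w_i,u_i]$ for $i\in I_2$, and the trivial path for $i\in I_3$. As in the paper, disjointness comes from the $Q'_j$ being pairwise vertex-disjoint and avoiding $V(P)$. You also justify, via the DFS order, that the subtrees at $x_i$ and $w_i$ miss $V(P)$ (so $\ell_i=i$), and you verify the mixed $I_1$/$I_2$ disjointness case; the paper only asserts both points in footnotes.
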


\begin{proof}
We define the desired $(u_i,v_i)$-path $R_i$ by: 
\[
R_i:=
\begin{cases}
Q_i, & i\in I_1,\\
Q'_i\cup\{w_iv_i\}, & i\in I_2,\\
\text{the trivial path at } v_i=u_i, & i\in I_3.
\end{cases}
\]

Each path $R_i$ intersects $P$ only at its endpoint $v_i$.
Since the paths $Q'_i$ are pairwise vertex-disjoint and avoid $V(P)$, and the added vertices $v_i$ are distinct, we see that the paths $R_i (i\in I_1\cup I_2)$ are pairwise vertex-disjoint. 
The paths with $i\in I_3$ are the trivial paths at the remaining
vertices of $P$. 
Hence $R_1,\dots,R_p$ are pairwise vertex-disjoint, and each has no internal vertex in $V(P)$.
\end{proof}

We now locate the neighbors of the vertices indexed by $I_3$, for the
degree estimate below.

\begin{proposition}\label{prop:33}
For each $i\in I_3$,
$N_H(v_i)\subseteq V(P)\cup \bigcup_{j\in I_1\cup I_2} V(\widehat Q_j).$
\end{proposition}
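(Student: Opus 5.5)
The plan is to argue by contradiction against the extremal choices made in (R2), using Proposition~\ref{prop:31} only for one structural fact about root paths. Fix $i\in I_3$ and a neighbor $y\in N_H(v_i)\setminus V(P)$; we must place $y$ in some $\widehat Q_j$ with $j\in I_1\cup I_2$.

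\textbf{Step 1: a structural fact.} Since $v_1,\dots,v_p$ are the first $p$ vertices in the DFS order and form the initial root path, $P$ is a downward chain from the root. Consequently, for any vertex $u$ the path $T_u$ runs along $P$ from $r=v_1$ down to $v_{\ell}$, where $\ell$ is the largest index with $v_\ell\in V(T_u)$. After that it leaves $P$ into the subtree of a child of $v_\ell$ outside $V(P)$, and never returns to $P$. Applied to $u=u_j$, this gives
\[
V(T_{u_j})\setminus V(P)=V(\widehat Q_j).
\]
So it suffices to show that $y\le_T u_j$ for some $j\in I_1\cup I_2$.

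\textbf{Step 2: $y$ is comparable to something in $U_1\cup W$.} Suppose $y$ is incomparable (in particular, not equal) to every vertex of $U_1\cup W$. Then $y\in N_H(v_i)\setminus(V(P)\cup U_1)$, $y\notin W$, and $i\notin I_1\cup I_2$. Hence adding $i$ to $I_2$ with $w_i:=y$ still satisfies condition~(i) of (R2) and enlarges $|I_2|$. This contradicts~(ii). So $y$ is comparable to some $z\in U_1\cup W$. We now split into cases according to $z$.

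\textbf{Step 3: the three cases for $z$.}
\begin{itemize}
\item If $z=u_j\in U_1$, then $u_j$ is a leaf, so $y\le_T u_j$ and we are done by Step~1.
\item If $z=w_j\in W$ and $y\le_T w_j$, then $y\le_T w_j\le_T u_j$, and again we are done.
\item The remaining case, $w_j<_T y$, is the step I expect to need the most care. Here I would replace $j$ by $i$ in $I_2$ with $w_i:=y$, which keeps $|I_2|$ unchanged. Validity of~(i) needs checking. Every other element $z'\in (U_1\cup W)\setminus\{w_j\}$ is incomparable to $w_j$. Such a $z'$ cannot be an ancestor of $y$, since the ancestors of $y$ are comparable to $w_j$. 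It also cannot be a descendant of $y$, since it would then be a descendant of $w_j$. Thus $z'$ is incomparable to $y$. Moreover $y\notin U_1$, because $U_1$ is incomparable with $w_j$. After the swap, the multiset of depths $|T[r,w]|$ changes only by replacing $|T[r,w_j]|$ with the strictly larger $|T[r,y]|$. This makes the sorted non-decreasing sequence lexicographically larger, contradicting~(iii).
\end{itemize}

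This yields $y\in \widehat Q_j$ in all admissible cases, proving the proposition. The only subtle points are checking incomparability after the swap and verifying that the lexicographic order on sorted sequences indeed increases when one entry strictly increases; both are short.
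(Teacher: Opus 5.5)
Your proof is correct and follows essentially the same route as the paper: the maximality of $|I_2|$ forces $y$ to be comparable with some vertex of $U_1\cup W$, and the same swap $I_2'=(I_2\setminus\{j\})\cup\{i\}$, $w_i:=y$, together with the lexicographic condition (R2)(iii), rules out the case $w_j<_T y$. The only difference is presentational. You argue directly via the identity $V(T_{u_j})\setminus V(P)=V(\widehat Q_j)$, whereas the paper argues by contradiction, using that every ancestor of a vertex of $\widehat Q_\ell$ lies in $V(P)\cup V(\widehat Q_\ell)$.
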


\begin{proof}
Suppose for a contradiction that there exist $i\in I_3$ and
$x\in N_H(v_i)$ such that
$x\notin V(P)\cup \bigcup_{\ell\in I_1\cup I_2} V(\widehat Q_\ell).$
By Proposition~\ref{prop:31}, the vertices $v_i$ and $x$ are comparable
in $T$. Since $x\notin V(P)$ and $P$ is the initial root path of $T$, we
have $v_i<_T x$.

By the maximality of $|I_2|$, the vertex $x$ must be comparable with some
vertex of $U_1\cup W$; otherwise we could add $i$ to $I_2$ and set
$w_i:=x$, preserving (R2)(i) while increasing $|I_2|$.
For every $\ell\in I_1\cup I_2$, every ancestor in $T$ of a vertex of
$\widehat Q_\ell$ lies in $V(P)\cup V(\widehat Q_\ell)$. In particular,
since $U_1\cup W\subseteq \bigcup_{\ell\in I_1\cup I_2}V(\widehat Q_\ell)$,
the vertex $x$ is not an ancestor of any vertex of $U_1\cup W$. Since the vertices of
$U_1$ are leaves of $T$, the comparable vertex must be some $w_j\in W$,
and necessarily $w_j<_T x$.

Now form $I_2'=(I_2\setminus\{j\})\cup\{i\}$, keep all $w_h$ with
$h\in I_2\setminus\{j\}$ unchanged, and set $w_i:=x$. The new vertex
$x$ is incomparable with every vertex of $(U_1\cup W)\setminus\{w_j\}$;
otherwise that vertex would also be comparable with the ancestor $w_j$,
contrary to (R2)(i). Thus (R2)(i) is preserved, and $|I_2'|=|I_2|$.
Since $w_j<_T x$, we have $|T[r,x]|>|T[r,w_j]|$, so the non-decreasing
sequence in (R2)(iii) is lexicographically larger, contradicting the
choice of $W$.
\end{proof}

We now derive the degree estimates.
\begin{lemma}\label{lem:31}
Let $P=v_1v_2\cdots v_p$ be a longest path in a connected graph $H$, and construct $U=\{u_i:i\in [p]\}$ from $P$ as in Definition~\ref{def:dfs-construction}. Then the following holds.
\begin{enumerate}[label=(\roman*),leftmargin=2.2em]
\item If $P$ is not a Hamiltonian path of $H$, then
$\sum_{i\in [p]} d_H(u_i)\le p(p-2).$
\item In all cases,
$\sum_{i\in [p]} d_H(u_i)\le (2c(H)-2)(p-1).$
Here, when $H$ is a tree, we use the convention $c(H)=2$.
\end{enumerate}
\end{lemma}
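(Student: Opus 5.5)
We bound $d_H(u_i)$ separately for each of the three classes $I_1,I_2,I_3$, using the standard endpoint argument for longest paths together with Propositions~\ref{prop:32} and~\ref{prop:33}.

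\emph{The classes $I_1$ and $I_2$.} For $i\in I_1\cup I_2$ the vertex $u_i$ is a leaf of the DFS-tree $T$. By Proposition~\ref{prop:31}, every neighbor of $u_i$ in $H$ is an ancestor of $u_i$, so $N_H(u_i)\subseteq V(T_{u_i})=V(P[v_1,v_{\ell_i}])\cup V(\widehat Q_i)$. The path $T_{u_i}$ read from $u_i$ to $v_{\ell_i}$ and continued along $P$ to $v_p$ is a path of $H$. Since $P$ is longest, this path has at most $p$ vertices, so $|\widehat Q_i|+(p-\ell_i+1)\le p$. The concatenation $\widehat Q_i$ followed by $P[v_{\ell_i},v_1]$ is a path $u_i\cdots v_1$ containing all ancestors of $u_i$. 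Applying the usual Pósa/longest-path endpoint argument to it, all neighbors of $u_i$ lie on a path with at most $p$ vertices that has $u_i$ as an endpoint. Hence $d_H(u_i)\le p-1$.

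For (i) I want the sharper bound $d_H(u_i)\le p-2$. The plan is to show that $u_i$ is not adjacent to $v_1$ unless a longer path or a Hamiltonian path arises. If $u_iv_1\in E(H)$, then the path above closes into a cycle of length $|T_{u_i}|+1$. Since $H$ is connected and $P$ is not Hamiltonian, some vertex outside this cycle would attach to it and yield a path longer than $p$, unless the cycle itself has fewer than $p$ vertices. In the latter case the neighbors of $u_i$ number at most (cycle length)$-1\le p-2$ anyway.

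\emph{The class $I_3$.} For $i\in I_3$, Proposition~\ref{prop:33} gives $N_H(v_i)\subseteq V(P)\cup\bigcup_j V(\widehat Q_j)$. I count these neighbors through a charging argument. Neighbors in $V(P)$ contribute at most $p-1$. Each neighbor of $v_i$ in $\widehat Q_j$ would allow a rerouting of $P$ through part of $\widehat Q_j$. Because $v_i\in I_3$ has degree at most two in $T$, this neighbor is a back edge, and a longest-path argument should forbid too many neighbors in $\widehat Q_j$, up to a loss compensated by the slack $|\widehat Q_j|\le \ell_j-1$. Summing over all classes, the total should come to $\sum_i d_H(u_i)\le p(p-2)$. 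The cleanest route is probably a double count over pairs $(v_i,\widehat Q_j)$, matching each edge from an $I_3$-vertex into $\widehat Q_j$ with a missing edge from $u_j$ to $P$.

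\emph{Part (ii).} Here I replace the longest-path endpoint argument with a circumference argument. If a leaf $u$ has neighbors $a_1<_T\cdots<_T a_m$ on $T_u$, then $T[a_1,u]+ua_1$ is a cycle of length at least $m+1$, so $m\le c(H)-1$. Similarly, for $v_i\in I_3$, the neighbors of $v_i$ along its root path lie within distance $c(H)-1$ of $v_i$ in each of the two directions, which gives $d_H(v_i)\le 2c(H)-2$. Summing over the $p$ vertices gives $p(2c(H)-2)$. To get the factor $p-1$, I would show that $v_1$ or $u$ has degree at most $c(H)-1$; for example, $v_1$ as the DFS root has all its neighbors as descendants, and these lie on a single root path in the relevant direction. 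For trees the bound follows directly from $c(H)=2$.

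I expect the $I_3$ charging step in (i) to be the main obstacle.
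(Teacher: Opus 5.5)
\textbf{There is a genuine gap.} Your bound for $I_1\cup I_2$ is fine: $d_H(u_j)\le p-2$ simply because $N_H(u_j)\subseteq V(T_{u_j})\setminus\{u_j,v_1\}$, and $u_jv_1P$ would be longer than $P$. But the step you flag as the main obstacle is the whole content of the lemma, and you do not supply it.

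\emph{Part (i).} Per-vertex bounds cannot give (i). You can afford only $p-2$ per index, and an $I_3$-vertex may already have about $p-2$ neighbours on $P$. So every edge from an $I_3$-vertex into a branch $\widehat Q_j$ must be paid for by a matching drop in $d_H(u_j)$. Your allowance of $p-1$ neighbours on $P$ per $I_3$-vertex also overshoots. You need the P\'osa bound $d_P(v_1)+d_P(v_p)\le p-1$, together with $1,p\in I_3$, to get $\sum_{i\in I_3}d_P(v_i)\le |I_3|(p-2)$.

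The missing charging argument goes as follows. Fix $j$, and let $y_1,\dots,y_t$ be the vertices of $P[v_1,v_{\ell_j}]-v_{\ell_j}$ with neighbours in $\widehat Q_j$; every $I_3$-vertex with such a neighbour is among them. Put $a_s:=d_{\widehat Q_j}(y_s)$.
\begin{itemize}
\item Lemma~\ref{lem:21} on $\widehat Q_j$ gives a $(y_s,\widehat Q_j,y_{s+1})$-path of length at least $(a_s+a_{s+1})/2+1$. Maximality of $P$ then forces $|P[y_s,y_{s+1}]|\ge (a_s+a_{s+1})/2+1$.
\item Adding $|P[v_1,y_1]|\ge a_1$, $|P[y_t,v_{\ell_j}]|\ge a_t+1$ and $d_{P_j}(u_j)\le t+1$ telescopes to $|P[v_1,v_{\ell_j}]|\ge \sum_s a_s+d_{P_j}(u_j)$.
\item With $d_{H-P_j}(u_j)\le |Q_j|-1$ and $|T[v_1,u_j]|\le p-1$, this yields $d_H(u_j)+\sum_{i\in I_3}d_{\widehat Q_j}(v_i)\le p-2$.
\end{itemize}
This is exactly your ``matching'' idea made precise; without it, (i) is unproved.

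\emph{Part (ii).} Your per-vertex claim $d_H(v_i)\le 2c(H)-2$ for $v_i\in I_3$ is false. The descendant neighbours of $v_i$ need not lie on one root path; they can be spread over many branches $\widehat Q_j$. Here is a counterexample.
\begin{itemize}
\item Take $P=v_1\cdots v_p$ with $i$ and $p-i$ both larger than $2aK$.
\item For $k=1,\dots,K$, attach at $v_{i+a+k}$ a pendant path $z_{k,1}\cdots z_{k,a}$ with $z_{k,1}v_{i+a+k}\in E(H)$, and join $v_i$ to every $z_{k,t}$.
\item Since $H-v_i$ is a forest, every cycle passes through $v_i$. One checks that $c(H)=2a+K+1$ and that $P$ is a longest path.
\item The DFS tree is $P$ plus the pendant paths. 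So $I_1=\{i+a+1,\dots,i+a+K\}$ and $U_1=\{z_{1,a},\dots,z_{K,a}\}$.
\item $I_2=\emptyset$, because every non-$P$ neighbour of $v_i$ lies in $U_1$ or is an ancestor of a vertex of $U_1$. Hence $v_i\in I_3$.
\item Yet $d_H(v_i)=Ka+2$. With $a=5$ and $K=10$ this is $52>40=2c(H)-2$.
\end{itemize}

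So (ii) also needs the charging. For $i\in I_3$ you may only bound $d_P(v_i)\le 2(c(H)-1)$. Both endpoints $v_1$ and $v_p$ must get $c(H)-1$, not just $v_1$, to reach the factor $p-1$. Then you show $d_H(u_j)+\sum_{i\in I_3}d_{\widehat Q_j}(v_i)\le 2c(H)-2$:
\begin{itemize}
\item $d_H(u_j)\le c(H)-1$, since $u_j$ is a leaf.
\item The cycle formed by $y_1$, its farthest neighbour on $\widehat Q_j$, $Q_j$ and $P[y_1,v_{\ell_j}]$ gives $a_1+|P[y_1,v_{\ell_j}]|\le c(H)-1$.
\item Combined with the same telescoping inequality, this gives $\sum_s a_s\le c(H)-1$.
\end{itemize}
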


\begin{proof}
We retain the notation $I_1,I_2,I_3,Q_j,Q'_j,\widehat Q_j,\ell_j$
introduced above. By Proposition~\ref{prop:33}, for each $i\in I_3$
\[
d_H(v_i)\le d_P(v_i)+
\sum_{j\in I_1\cup I_2} d_{\widehat Q_j}(v_i).
\]
Summing this inequality over $i\in I_3$, and then adding
$\sum_{j\in I_1\cup I_2}d_H(u_j)$ to both sides, gives
\[
\sum_{i\in [p]}d_H(u_i)
\le
\sum_{i\in I_3} d_P(v_i)
+\sum_{j\in I_1\cup I_2}
\left(d_H(u_j)+\sum_{i\in I_3} d_{\widehat Q_j}(v_i)\right).
\]

The proof reduces to verifying the following three estimates:
\begin{itemize}
    \item[(a)] If $P$ is not
Hamiltonian, then $\sum_{i\in I_3} d_P(v_i)
\le |I_3|(p-2).$
    \item[(b)] $\sum_{i\in I_3} d_P(v_i)
\le (2|I_3|-2)(c(H)-1).$
    \item[(c)] For each $j\in I_1\cup I_2$, $d_H(u_j)+\sum_{i\in I_3} d_{\widehat Q_j}(v_i)
\le \min\{p-2,\,2c(H)-2\}.$
\end{itemize}

We will use the fact that $1,p\in I_3$ throughout these estimates. 
This is because $P$ is longest, the endpoints $v_1$ and $v_p$ have no neighbors in
$H-V(P)$, and hence they belong neither to $I_1$ nor to $I_2$.

\medskip

\noindent\textbf{Proof of~(a).}
Assume that $P$ is not Hamiltonian. Then $H-V(P)\ne\emptyset$ and $p\ge 3$.
By the standard P\'osa rotation argument, no predecessor on $P$ of a
neighbor of $v_1$ is adjacent to $v_p$; otherwise the vertices of $P$
would form a cycle, which could be extended to a path longer than $P$. Therefore,
\begin{equation}\label{eq:31-posa}
d_P(v_1)+d_P(v_p)\le p-1.
\end{equation}

If $|I_3|=2$, then $I_3=\{1,p\}$ and~\eqref{eq:31-posa} gives
$\sum_{i\in I_3} d_P(v_i)\le p-1\le 2(p-2).$
If $|I_3|=3$, then $I_1$ is non-empty as $P$ is not Hamiltonian.
This implies $p\geq 4$, and
$\sum_{i\in I_3} d_P(v_i)\le (p-1)+(p-1)\le 3(p-2).$
Now suppose $|I_3|\ge 4$. For every $i\in I_3\setminus\{1,p\}$,
$d_P(v_i)\le (p-3)+\mathbf{1}_{v_iv_1\in E(H)}+\mathbf{1}_{v_iv_p\in E(H)}.$

Summing over $i\in I_3\setminus\{1,p\}$ and using~\eqref{eq:31-posa}, we obtain the desired bound:
\begin{align*}
{\textstyle\sum_{i\in I_3}} d_P(v_i)
&\le d_P(v_1)+d_P(v_p)+(|I_3|-2)(p-3)
   +|N_P(v_1)\cap U_3|+|N_P(v_p)\cap U_3| \\
&\le (p-1)+(|I_3|-2)(p-3)+(p-1) = |I_3|(p-3)+4\le |I_3|(p-2).\qedhere
\end{align*}

\noindent\textbf{Proof of~(b).}
We first show that
$d_P(v_1)\le c(H)-1, d_P(v_p)\le c(H)-1.$
Indeed, if $d_P(v_1)\le 1$ then this follows from $c(H)\ge 2$; otherwise, the farthest neighbor of $v_1$ on $P$, together with the corresponding segment of $P$, gives a cycle of length at least $d_P(v_1)+1$. The proof for $v_p$ is symmetric.

For every $i\in I_3\setminus\{1,p\}$, let $a_i$ and $b_i$ be the numbers
of neighbors of $v_i$ on $P[v_1,v_{i-1}]$ and on $P[v_{i+1},v_p]$,
respectively. Each of $a_i$ and $b_i$ is at most $c(H)-1$: if
$a_i\le 1$, this follows from $c(H)\ge 2$; and if $a_i\ge 2$, then the
two extreme neighbors of $v_i$ on $P[v_1,v_{i-1}]$, together with the
segment of $P$ between them and the two edges from them to $v_i$, form a
cycle of length at least $a_i+1$. The proof for $b_i$ is symmetric.
Hence
$d_P(v_i)=a_i+b_i\le 2(c(H)-1).$
Therefore, we derive the desired bound:
\begin{equation*}
{\textstyle\sum_{i\in I_3}} d_P(v_i)=d_P(v_1)+d_P(v_p)+{\textstyle\sum_{i\in I_3\setminus\{1,p\}}} d_P(v_i)\le (2|I_3|-2)(c(H)-1). \tag*{\qedsymbol}
\end{equation*}

\noindent\textbf{Proof of~(c).}
Fix $j\in I_1\cup I_2$. We first introduce some notation. Let 
$P_j:=P[v_1,v_{\ell_j}],$
$Y_j:=\{y\in V(P_j)\setminus\{v_{\ell_j}\}: N_{\widehat Q_j}(y)\ne\emptyset\}.$
List the vertices of $Y_j$ as $y_1,\dots,y_t$ in their order on $P$ from $v_1$ to $v_p$.

We first claim that every vertex $v_i$ with $i\in I_3$ and
$N_{\widehat Q_j}(v_i)\ne\emptyset$ lies in
$V(P_j)\setminus\{v_{\ell_j}\}$, and hence belongs to $Y_j$.
By Proposition~\ref{prop:31}, the vertex $v_i$ is comparable with a
vertex of $\widehat Q_j$; since $v_i\in V(P)$, this gives
$v_i\in V(P_j)$. It remains to show that $v_i\ne v_{\ell_j}$. Indeed,
$\ell_j\notin I_3$: this is clear if $j\in I_1$, since then
$\ell_j=j\notin I_3$; if $j\in I_2$, then the maximality of $P$ gives
$v_{\ell_j}\notin\{v_1,v_p\}$, and $Q_j$ witnesses that
$\ell_j\in I_1$. This proves the claim, which further implies the following: 
\begin{equation}\label{eq:trans}
\sum_{i\in I_3}d_{\widehat Q_j}(v_i)\le \sum_{\alpha=1}^t d_{\widehat Q_j}(y_\alpha).
\end{equation}

We now prove~(c) for this fixed $j$. Since $u_j$
is a leaf of $T$, Proposition~\ref{prop:31} implies that
$N_H(u_j)\subseteq V(T[v_1,u_j])$. If $d_H(u_j)\ge 2$, then the first
and last neighbors of $u_j$ on this path, together with the segment
between them and the two edges from them to $u_j$, form a cycle of length
at least $d_H(u_j)+1$; the case $d_H(u_j)\le 1$ follows from
$c(H)\ge 2$. Hence,
\begin{equation}\label{eq:31-leaf-circ}
d_H(u_j)\le c(H)-1.
\end{equation}

If $Y_j=\emptyset$, then the preceding claim gives
$\sum_{i\in I_3}d_{\widehat Q_j}(v_i)=0.$
Moreover, $v_1\notin N_H(u_j)$, as otherwise $u_jv_1P[v_1,v_p]$ would be longer
than $P$. This gives the desired bound with respect to $p$:
$d_H(u_j)\le |T[v_1,u_j]|-1\le p-2.$
The circumference bound follows from~\eqref{eq:31-leaf-circ}.
This proves~(c) when $Y_j=\emptyset$.

We may therefore assume that $Y_j\neq \emptyset$, so $t\ge 1$. For $1\leq s\leq t$, define
$a_s:=d_{\widehat Q_j}(y_s).$
For $1\le s<t$, apply Lemma~\ref{lem:21} to $\widehat Q_j$ with the two
sets $N_{\widehat Q_j}(y_s)$ and $N_{\widehat Q_j}(y_{s+1})$, choosing
the two neighbors so that their distance on $\widehat Q_j$ is maximum.
Together with the two edges from $y_s$ and $y_{s+1}$ to these chosen
neighbors, this gives a $(y_s,\widehat Q_j,y_{s+1})$-path of length at least $(a_s+a_{s+1})/2+1$. The maximality of $P$ then implies
\begin{equation}\label{eq:31-middle}
|P[y_s,y_{s+1}]|\ge
\frac{a_s+a_{s+1}}{2}+1.
\end{equation}

For the last segment, take a neighbor of $y_t$ on $\widehat Q_j$ whose
distance from $v_{\ell_j}$ along $Q_j$ is maximum. Since this distance is
at least $a_t$, the resulting $(y_t,Q_j,v_{\ell_j})$-path  has length at least $a_t+1$. The maximality of $P$ yields
\begin{equation}\label{eq:31-last}
|P[y_t,v_{\ell_j}]|\ge a_t+1.
\end{equation}
Moreover, $N_{P_j}(u_j)\subseteq Y_j\cup\{v_{\ell_j}\}$, so
\begin{equation}\label{eq:31-dPj}
d_{P_j}(u_j)\le t+1.
\end{equation}
Combining~\eqref{eq:31-middle}, \eqref{eq:31-last}, and
\eqref{eq:31-dPj}, and using $a_1,a_t\ge 1$, we obtain the following:
\begin{align}
a_1+|P[y_1,v_{\ell_j}]|
&=a_1+\sum_{s=1}^{t-1}|P[y_s,y_{s+1}]|+|P[y_t,v_{\ell_j}]|
\ge a_1+\sum_{s=1}^{t-1}
\left(\frac{a_s+a_{s+1}}{2}+1\right)+(a_t+1) \notag\\
&= \sum_{s=1}^t a_s+\frac{a_1+a_t}{2}+t
\ge \sum_{s=1}^t a_s+d_{P_j}(u_j).
\label{eq:31-key}
\end{align}

We now finish the two bounds in~(c) separately, using two different choices from
$N_{\widehat Q_j}(y_1)$.
For the bound $p-2$, consider the first segment and take a neighbor of $y_1$ on $\widehat Q_j$ whose
distance from $u_j$ along $Q_j$ is maximum. This gives a
$(u_j,Q_j,y_1)$-path of length at least $a_1$. By the maximality of $P$,
\begin{equation}\label{eq:31-first}
|P[v_1,y_1]|\ge a_1.
\end{equation}
Adding~\eqref{eq:31-first} to~\eqref{eq:31-key} gives
$|P_j|=|P[v_1,v_{\ell_j}]|
=|P[v_1,y_1]|+|P[y_1,v_{\ell_j}]|
\ge \sum_{s=1}^t a_s+d_{P_j}(u_j).$
By Proposition~\ref{prop:31}, we have $N_{H-P_j}(u_j)\subseteq V(Q_j-\{u_j,v_{\ell_j}\})$, and therefore
$d_{H-P_j}(u_j)\le |Q_j|-1.$
Combined with~\eqref{eq:trans} (recall $a_s=d_{\widehat Q_j}(y_s)$), it follows that
\begin{align*}
d_H(u_j)+\sum_{i\in I_3} d_{\widehat Q_j}(v_i)&\leq 
d_H(u_j)+\sum_{s=1}^t a_s=d_{H-P_j}(u_j)+\big(d_{P_j}(u_j)+\sum_{s=1}^t a_s\big)\\
&\le |P_j|+(|Q_j|-1)=|T[v_1,u_j]|-1
\le p-2.
\end{align*}

It remains to prove the upper bound $2c(H)-2$ in~(c).
Choose a neighbor of $y_1$ on $\widehat Q_j$ farthest from $v_{\ell_j}$
along $Q_j$. The edge from $y_1$ to this neighbor, the segment of $Q_j$
from this neighbor to $v_{\ell_j}$, and the segment $P[y_1,v_{\ell_j}]$
form a cycle of length at least $a_1+|P[y_1,v_{\ell_j}]|+1$. Hence,
\begin{equation*}\label{eq:31-circ1}
a_1+|P[y_1,v_{\ell_j}]|\le c(H)-1.
\end{equation*}
Together with~\eqref{eq:31-key}, this gives
$\sum_{s=1}^t a_s\le c(H)-1.$
Combining this with~\eqref{eq:31-leaf-circ}, we obtain
$d_H(u_j)+\sum_{s=1}^t a_s\le 2c(H)-2.$
Together with~\eqref{eq:trans}, this proves the bound $2c(H)-2$, and
therefore completes the proof of~(c).\qed

\medskip

Now we return to the proof of Lemma~\ref{lem:31}. 
If $P$ is not Hamiltonian, using~(a) and the
$p-2$ bound in~(c) over
$j\in I_1\cup I_2$ gives
\[
\sum_{i\in[p]}d_H(u_i)
\le \bigl(|I_3|+|I_1|+|I_2|\bigr)(p-2)
=p(p-2).
\]
This proves~(i). In all cases, using~(b)
and the $2c(H)-2$ bound in~(c) over
$j\in I_1\cup I_2$ gives
\[
\sum_{i\in[p]}d_H(u_i)
\le \bigl(|I_3|-1+|I_1|+|I_2|\bigr)(2c(H)-2)
=(p-1)(2c(H)-2).
\]
This proves (ii) and Lemma~\ref{lem:31}. \qed
\end{proof}

\section{Proofs of Theorems~\ref{thm:local-bondy} and~\ref{thm:local-jung}}\label{Sec:4}
In this section, we first complete the proof of Theorem~\ref{thm:local-jung} and then derive Theorem~\ref{thm:local-bondy}. We recall that Table~\ref{tab:roadmap} summarizes the main steps in the proof of Theorem~\ref{thm:local-jung}.

\subsection{Small $p(H)$}\label{sec:small p}
\begin{theorem}\label{thm:41}
Let $(G,C,H)$ be a $k$-triple system and
$\delta:=\delta_G(H)=\min\{d_G(v): v\in V(H)\}$.
If $k-1\le p(H)\le \delta-k+1$, then
$|C|\ge k(\delta-k+2)$.
\end{theorem}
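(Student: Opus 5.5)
We will use the counting inequality announced in the proof sketch, namely that for a suitable set $U\subseteq V(H)$ whose vertices can be linked to $C$ appropriately, one has $|C|\ge e(U,C)+|\NC(U)|$. The set $U$ will be the set of $p:=p(H)$ vertices produced by the DFS construction of Definition~\ref{def:dfs-construction}. Proposition~\ref{prop:32} supplies pairwise disjoint paths from $U$ to $P$, and Lemma~\ref{lem:31} bounds their degree sum inside $H$. The target $k(\delta-k+2)$ suggests the following bookkeeping. Each $u\in U$ sends at least $\delta-d_H(u)$ edges to $C$, so
\[e(U,C)\ge p\delta-\sum_{u\in U}d_H(u)\ge p\delta-p(p-2)\]
whenever $P$ is not Hamiltonian in $H$. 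If $P$ is Hamiltonian, then $h=p$ and $d_H(u)\le p-1$ for every $u$, which gives $e(U,C)\ge p(\delta-p+1)$. The argument is then a comparison of these quantities with $k(\delta-k+2)$ over the range $k-1\le p\le\delta-k+1$.

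The first step is to justify the counting inequality. Distinct vertices $u,u'\in U$ are joined by a $(u,H,u')$-path, obtained by going from $u$ to $P$ along the path of Proposition~\ref{prop:32}, along $P$, and back to $u'$. So for any two consecutive vertices $x,y$ of $\NC(U)$ on $C$ that are adjacent to distinct vertices of $U$, local longestness makes the segment $C[x,y]$ at least as long as that path, which is at least $2$ plus the amount of $P$ traversed. The cruder version is a Fan-type count. Go around $C$ and charge each vertex of $\NC(U)$, together with each edge from $U$ to it, against the segment that follows it. Each vertex $u$ with $m_u$ neighbours on $C$ forces, between successive attachments from different vertices of $U$, segments whose lengths grow with the spread of the attachment points along the tree paths. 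I expect this to be the content of equation~\eqref{eq:41-counting}. I would prove it by listing $\NC(U)$ cyclically and bounding each segment below by $1$ plus the number of $U$-edges landing at its far endpoint. Where that bound fails, the two endpoints attach to the same $u$, and then I would use a longer $(x,H,y)$-path that routes through $P$ to compensate.

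With the inequality in hand, the second step is pure arithmetic. In the non-Hamiltonian case,
\[|C|\ge e(U,C)+|\NC(U)|\ge p(\delta-p+2)+|\NC(U)|-2p.\]
The function $p\mapsto p(\delta-p+2)$ is concave, so over $k-1\le p\le\delta-k+1$ it is minimized at an endpoint, and both endpoints give exactly $(k-1)(\delta-k+3)$. This falls short of $k(\delta-k+2)$ by roughly $\delta-2k+1$. The missing amount has to come from the $|\NC(U)|$ term, from the local $k$-connectivity (Lemma~\ref{lem:22} gives at least $k$ independent edges from $H$ to $C$), or from choosing $p$ carefully.

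So I expect the real inequality to be sharper than my crude version. The plan is to replace $U$ by $U\cup\{$one further vertex$\}$ when that is possible, or to use that $k$-linkedness forces $|\NC(U)|\ge k$, and to recheck the arithmetic at the endpoint $p=k-1$, where the bound must be tight. The Hamiltonian case, $h=p$ with $d_H\le p-1$, is analogous and has one more unit of slack per vertex. The main obstacle is therefore the first step: establishing the counting inequality with enough precision to close this gap, particularly when several vertices of $U$ share attachment points on $C$, or when $p=k-1$ so that $U$ alone cannot reach $k$ distinct neighbours. For that boundary case I would first try to combine Lemma~\ref{lem:23}(ii) with the strong-attachment bound $t\ge k$ of Lemma~\ref{lem:23}(iii).
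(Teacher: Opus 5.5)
Your setup matches the paper's: the DFS set $U$, the linking paths of Proposition~\ref{prop:32}, Lemma~\ref{lem:31}(i), the Hamiltonian/non-Hamiltonian split, and the inequality $|C|\ge e(U,C)+|\NC(U)|$. However, there is a genuine gap: you never close the arithmetic, and the patches you propose cannot close it.

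\textbf{The missing step.} It is a one-line pigeonhole bound. Since $|U|=p$, every vertex of $C$ has at most $p$ neighbours in $U$, so $|\NC(U)|\ge e(U,C)/p$.

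In the non-Hamiltonian case this gives $|\NC(U)|\ge \delta-p+2\ge k+1$. In particular $|\NC(U)|\ge 2$, which the cyclic counting needs. It also gives
\[
|C|\ge \Bigl(1+\frac1p\Bigr)e(U,C)\ge (p+1)(\delta-p+2).
\]
In the Hamiltonian case, take $U=V(H)$ with trivial linking paths. Then $e(H,C)\ge p(\delta-p+1)$ and $|\NC(H)|\ge \delta-p+1\ge 2$, so $|C|\ge (p+1)(\delta-p+1)$.

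So the quantity to optimise is $g(x)=x(\delta+2-x)$ at $x=p+1$, not $p(\delta-p+2)$ at $p$. The hypothesis $k-1\le p\le \delta-k+1$ says exactly that $p+1\in[k,\delta-k+2]$. This interval is symmetric about the axis $(\delta+2)/2$ of the concave function $g$, hence $|C|\ge g(p+1)\ge g(k)=k(\delta-k+2)$.

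\textbf{Why your fallbacks fail.} Using only $|\NC(U)|\ge k$ gives, at $p=k-1$, the bound $(k-1)(\delta-k+3)+k$. This is short of the target by $\delta-3k+3$, which is positive once $\delta\ge 3k-2$. There are also bookkeeping slips:
\begin{itemize}
\item the $-2p$ in your display is spurious;
\item $p=k-1$ and $p=\delta-k+1$ are not mirror points of $p(\delta+2-p)$, since the mirror of $k-1$ is $\delta-k+3$;
\item the Hamiltonian case is the tight one, not the one with extra slack, since $d_H(u)\le p-1$ is one unit per vertex weaker than Lemma~\ref{lem:31}(i).
\end{itemize}

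\textbf{Your sketch of the counting inequality does not work as stated.} The per-segment bound $|C[x_i,x_{i+1}]|\ge d_U(x_{i+1})+1$ is false even when $x_i$ and $x_{i+1}$ attach to different vertices of $U$. For instance, let $H=P$ (so $U=V(P)$), let $x_{i+1}$ be adjacent to $v_1,\dots,v_4$, and let $x_i$ be adjacent only to $v_2$. Then every $(x_i,H,x_{i+1})$-path has length at most $4<5$, so local longestness cannot give your bound.

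What holds, and what the paper proves, is the symmetric bound
\[
|C[x_i,x_{i+1}]|\ge \frac{d_U(x_i)+d_U(x_{i+1})}{2}+1 .
\]
To get it:
\begin{itemize}
\item let $A_i,B_i\subseteq V(P)$ be the $P$-ends of the linking paths $R_j$ of the $U$-neighbours of $x_i$ and of $x_{i+1}$, respectively;
\item apply Lemma~\ref{lem:21} to choose $v_a\in A_i$ and $v_b\in B_i$ with $|P[v_a,v_b]|\ge (|A_i|+|B_i|)/2-1$;
\item route the path $x_iu_aR_aP[v_a,v_b]R_bu_bx_{i+1}$;
\item when $A_i=B_i=\{v_a\}$, use $x_iu_ax_{i+1}$ instead, which suffices because then $d_U(x_i)=d_U(x_{i+1})=1$.
\end{itemize}
Summing this cyclically over the $|\NC(U)|\ge 2$ segments yields $|C|\ge e(U,C)+|\NC(U)|$.
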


\begin{proof}
Let $p:=p(H)$, $P=v_1v_2\cdots v_p$ be a longest path in $H$, and set
$f(x):=x(\delta+2-x)$. Since $f$ is concave and symmetric about $(\delta+2)/2$, and since
$k\leq p+1\leq\delta-k+2$, we have
$f(p+1)\ge f(k)=f(\delta-k+2)=k(\delta-k+2)$.

We first establish the following claim. Suppose that
$U=\{u_1,\dots,u_p\}\subseteq V(H)$ and that there are pairwise
vertex-disjoint $(u_i,v_i)$-paths $R_i$ in $H$ with
$V(R_i)\cap V(P)=\{v_i\}$ for each $i$. If $|\NC(U)|
\geq 2$, then
\begin{equation}\label{eq:41-counting}
|C|\ge e(U,C)+|\NC(U)|.
\end{equation}
Indeed, write $N_C(U)=\{x_1,x_2,\dots,x_t\}$ in cyclic order on $C$, with $x_{t+1}=x_1$. For each $i\in [t]$, define
$A_i:=\{v_j: u_j\in N_U(x_i)\},$
$B_i:=\{v_j: u_j\in N_U(x_{i+1})\}.$
Let $d_i:=\max\{|P[v_a,v_b]|:v_a\in A_i,\ v_b\in B_i\}$. By Lemma~\ref{lem:21}, $d_i\ge (|A_i|+|B_i|)/2-1$. If $d_i=0$, then $A_i=B_i=\{v_a\}$ for some $a$, and the path $x_i u_a x_{i+1}$ has length $2=(d_U(x_i)+d_U(x_{i+1}))/2+1$. If $d_i\ge 1$, choose $v_a\in A_i$ and $v_b\in B_i$ with $|P[v_a,v_b]|=d_i$. Since $a\ne b$, concatenating the edge $x_iu_a$, the path $R_a$, the segment $P[v_a,v_b]$, the path $R_b$ in reverse, and the edge $u_bx_{i+1}$ produces a simple $(x_i,H,x_{i+1})$-path of length at least
$1+d_i+1
\ge \frac{d_U(x_i)+d_U(x_{i+1})}{2}+1.$
Since $C$ is locally longest with respect to $H$, we obtain
$|C[x_i,x_{i+1}]|\ge \frac{d_U(x_i)+d_U(x_{i+1})}{2}+1.$
Summing over $i=1,\dots,t$ gives
$|C|=\sum_{i=1}^t |C[x_i,x_{i+1}]|
\ge \sum_{i=1}^t d_U(x_i)+t
= e(U,C)+|N_C(U)|.$ This proves~\eqref{eq:41-counting}.

We then distinguish two cases according
to whether $P$ is a Hamiltonian path of $H$. 
First, assume that $P$ is not Hamiltonian.
By Proposition~\ref{prop:32} and
Lemma~\ref{lem:31}(i), there is a set
$U=\{u_1,\dots,u_p\}\subseteq V(H)$ satisfying the hypothesis of the claim and
$\sum_{i=1}^p d_H(u_i)\le p(p-2)$. Hence
$e(U,C)=\sum_{i=1}^p(d_G(u_i)-d_H(u_i))\ge p\delta-p(p-2)
=p(\delta-p+2)$. 
Since each vertex of $\NC(U)$ contributes at most $p$ to $e(U,C)$, we have $|\NC(U)|\ge e(U,C)/p\geq\delta-p+2$.
In particular, $|\NC(U)|\geq \delta-p+2\geq k+1>2$.
Then~\eqref{eq:41-counting} gives $|C|\ge e(U,C)+|\NC(U)|\geq(p+1)(\delta-p+2)>f(p+1)\geq f(k)$, which suffices.

It remains to assume that $P$ is Hamiltonian.
Then $|V(H)|=p$, and every
vertex of $H$ has at least $\delta-(p-1)$ neighbors on $C$.
Thus
$e(H,C)\ge p(\delta-p+1)$ and $|\NC(H)|\ge \delta-p+1\ge 2$.
Applying~\eqref{eq:41-counting} with $U=V(H)$ gives
$|C|\ge e(H,C)+|\NC(H)|\ge (p+1)(\delta-p+1)=f(p+1)\ge f(k)$.
This proves the theorem.
\end{proof}

\subsection{Large circumference}\label{sec:large c}
A \emph{block} of a connected graph is a maximal connected subgraph with no cut-vertex. An \emph{end-block} is a block containing at most one cut-vertex of the graph. If $D$ is an end-block, we write $x\in V(D)$ for its unique cut-vertex, or for an arbitrary vertex when $D$ has no cut-vertex. The \emph{block structure} $T(H)$ of a connected graph $H$ is the bipartite graph with bipartition $(\mathscr{B}(H),\mathscr{C}(H))$, where $\mathscr{B}(H)$ is the set of blocks of $H$, $\mathscr{C}(H)$ is the set of cut-vertices of $H$, and $x\in \mathscr{C}(H)$ is adjacent to $B\in \mathscr{B}(H)$ if and only if $x\in V(B)$. Then $T(H)$ is a tree.

Throughout this subsection, let $(G,C,H)$ be a $k$-triple system with $k\ge 3$.

\begin{lemma}\label{lem:43}
Let $B$ be an end-block of $H$. Suppose that $|V(B)|\ge k\ge 3$ and that $B$ satisfies the following property
\begin{quote}
$(\star)$ for every two vertices $x,y\in V(B)$, there exists an $(x,y)$-path in $B$ of length at least $k-2$.
\end{quote}
Then
$|C|\ge k\bigl(\delta_G(H)-k+2\bigr).$
In particular, the same conclusion holds for $|V(B)|\ge k$ and $c(B)\ge 2(k-2)$.
\end{lemma}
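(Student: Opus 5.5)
The plan is to run Fan's attachment argument on the end-block $B$, viewed through the cut-vertex $b$ (the unique cut-vertex of $H$ in $B$, or an arbitrary vertex of $B$ if $H=B$). Set $B_0:=B-b$ and $\delta:=\delta_G(H)$. Every vertex of $B_0$ has all its $G$-neighbours in $V(B)\cup V(C)$. Local $k$-connectivity of $H$ to $C$ implies that $N_C(B_0)\cup\{b\}$ separates $B_0$ from the rest, so the number of attachments available to $B_0$ is at least $k-1$ vertices of $C$ together with the route through $b$.

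The first step is to collect the attachments and turn them into segment bounds. Let $X:=\NC(B_0)$ if $H=B$, and $X:=\NC(B_0)\cup\{w\}$ otherwise, where $w\in \NC(H-B_0)$ is reached through $b$ and the rest of $H$. I list $X=\{x_1,\dots,x_m\}$ in cyclic order, choosing the point $w$ so that $m\ge k$; this uses Menger-type reasoning as in Lemma~\ref{lem:22}. Property $(\star)$ then gives, for any two consecutive vertices $x_i,x_{i+1}$ attached to distinct vertices $y\neq y'$ of $B$, an $(x_i,H,x_{i+1})$-path $x_i y \cdots y' x_{i+1}$ of length at least $k$. When one endpoint is $w$, the corresponding path passes through $b$ and a $(y,b)$-path of $B$, and so is only longer. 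By local longestness, every such segment satisfies $|C[x_i,x_{i+1}]|\ge k$.

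The second step is to count the vertices of $C$ themselves. A vertex $y\in B_0$ has at least $\delta-(|V(B)|-1)$ neighbours on $C$, so the segments have many neighbour vertices. I will refine the argument by taking $X$ to be \emph{all} of $\NC(B_0)$ and bounding each segment by $k$ in the strongly attached case. If $|\NC(B_0)|\ge \delta-k+2$, then $|C|\ge k(\delta-k+2)$ follows immediately. Otherwise $|V(B)|$ is large relative to $\delta$; here I instead use the counting identity \eqref{eq:41-counting} with a set $U\subseteq V(B_0)$, or Lemma~\ref{lem:23}(iii), to get segments of length roughly $\delta-k+2$ across $k$ strongly attached pairs.

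The main obstacle is the degenerate situation in which consecutive attachments share the same unique neighbour in $B$; there only a length-$2$ path is guaranteed. I would handle it as in Lemma~\ref{lem:23}(i)--(ii) by passing to a maximal strong attachment $T$ and charging each vertex of $S=\NC(B_0)\setminus T$ two extra edges. The vertices of $S$ each have a single neighbour in $B$, so the count $e(B_0,C)\ge |V(B_0)|(\delta-|V(B)|+1)$ forces either $|T|$ large or $|S|$ large, and balancing these two cases yields $k(\delta-k+2)$.

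The ``in particular'' clause reduces to $(\star)$. If $c(B)\ge 2(k-2)$ and $B$ is $2$-connected (it is, since $|V(B)|\ge k\ge 3$ and $B$ is a block), then for any $x,y$ I take two disjoint paths from $\{x,y\}$ to a longest cycle $D$ of $B$. The longer of the two routes around $D$ between the landing points has length at least $c(B)/2\ge k-2$.
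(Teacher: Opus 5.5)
There is a genuine gap in your second step, and it is exactly where the difficulty of the lemma lies. Take a maximum strong attachment $T=\{u_1,\dots,u_t\}$ of $B$, with $t\ge k$, $h=|V(B)|$, and $s$ the number of remaining attachment vertices. The lemma needs the bound
\[
d^{*}_{B}(u_i,u_{i+1})\ \ge\ \delta_G(H)+2-t-\frac{s}{h-1}
\]
for the strongly attached pairs. Using it on $k$ pairs, $(\star)$ on the other $t-k$ pairs, and Lemma~\ref{lem:23}(ii) gives $|C|\ge k(\delta_G(H)-k+2)+s\bigl(2-\frac{k}{h-1}\bigr)$. Your proposal never produces this bound, and the tools you name cannot. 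To apply Lemma~\ref{lem:23} at all, $B$ must become a whole component, e.g.\ by contracting $H-(B-b)$ into $b$; this is the rigorous form of your ``route through $b$'', and it preserves local longestness and local $k$-connectivity. After contraction, $b$ has degree only $d_B(b)$ plus the number of $C$-neighbours of $H-(B-b)$, which can be far below $\delta_G(H)$. So Lemma~\ref{lem:23}(iii), which averages over all of $V(B)$ including $b$, loses up to about $k\delta_G(H)/h$ in $|C|$. The minimum-degree Theorem~\ref{thm:bondy-jackson} would let you make $b$ the exceptional vertex, but it cannot absorb $S$. All vertices of $S$ may hang on a single vertex of $B$, which then loses an extra $s$ in degree in the auxiliary graph. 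The missing idea is the paper's average-degree Bondy--Jackson theorem, Theorem~\ref{thm:25}. Applied with the contracted vertex excluded, its average runs over $B-b$ only and is therefore at least $\delta_G(H)$.

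Your substitute, balancing $|C|\ge tk+2s$ against $t(h-1)+s\ge e(B-b,C)\ge (h-1)(\delta_G(H)-h+1)$, cannot close the gap. Since $k/(h-1)<2$, these inequalities (with $t\ge k$) give at best $|C|\ge k\max\{k,\delta_G(H)-h+1\}$. This is below $k(\delta_G(H)-k+2)$ for every $h\ge k$ once $\delta_G(H)\ge 2k-1$. Already $h=k$, $s=0$, $t=\delta_G(H)-k+1$ is consistent with them and yields only $k(\delta_G(H)-k+1)$. Your case split is also flawed in two ways:
\begin{itemize}
\item $|N_C(B-b)|\ge\delta_G(H)-k+2$ does not give the bound immediately, because consecutive neighbours that are not strongly attached only force segments of length $2$.
\item $|N_C(B-b)|\le\delta_G(H)-k+1$ only forces $d_B(y)\ge k-1$ for $y\in B-b$, i.e.\ $|V(B)|\ge k$. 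It does not make $|V(B)|$ large relative to $\delta_G(H)$: take $|V(B)|=k$ with all of $B-b$ adjacent to the same $\delta_G(H)-k+1$ vertices of $C$.
\end{itemize}
Nor does \eqref{eq:41-counting} help in general. It needs $k$ vertices of small $B$-degree disjointly linked to a path, and these need not exist, e.g.\ when $B$ is a large clique with few vertices adjacent to $C$. Your derivation of $(\star)$ from $c(B)\ge 2(k-2)$ is correct and matches the paper.
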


\begin{proof}
Suppose that $H$ is $2$-connected. Set $G_0:=G[H\cup C]$, $B_0:=H$, and choose
an arbitrary vertex $b_0\in V(B_0)$. Otherwise, let $b$ be the cut-vertex
of the end-block $B$. Let $G_0$ be the graph obtained from $G[H\cup C]$
by contracting $V(H)\setminus V(B-b)$ to a single vertex $b_0$, and set
$B_0:=G_0[(B-b)\cup\{b_0\}]$.
In both cases, $B_0\cong B$, and $(G_0,C,B_0)$ is a $k$-triple system:
local $k$-connectivity is inherited after contraction, and for $x,y\in V(C)$, every
$(x,B_0,y)$-path in $G_0$ lifts to an $(x,H,y)$-path in $G$ of at least
the same length.

Let $h:=|V(B_0)|$, and let
$T_0=\{u_1,\dots,u_t\}$ be a maximum strong attachment of $B_0$ to $C$.
By Lemma~\ref{lem:22}, we have $t\ge \min\{k,h\}=k$. Let
$s:=|\NC(B_0)\setminus T_0|$, and define
$r:=\sum_{x\in V(B_0)\setminus\{b_0\}} d_{G_0}(x)/(h-1).$
Since every $x\in V(B_0)\setminus\{b_0\}$ satisfies $d_{G_0}(x)=d_G(x)$, we have $r\geq \delta_G(H)$.
By Theorem~\ref{thm:25}, every strongly attached
pair $\{u_i,u_{i+1}\}$ satisfies
\[
\dstar_{B_0}(u_i,u_{i+1})\ge r+2-t-\frac{s}{h-1}
\ge \delta_G(H)+2-t-\frac{s}{h-1}.
\]
By property $(\star)$, $\dstar_{B_0}(u_i,u_{i+1})\ge k$ for every $i$. Using $h\geq k\geq 3$, we obtain from Lemma~\ref{lem:23}(ii) that
\begin{align*}
|C|
&\ge \sum_{i=1}^t \dstar_{B_0}(u_i,u_{i+1})+2s\ge k\left(\delta_G(H)+2-t-\frac{s}{h-1}\right)+(t-k)k+2s \\
&= k\bigl(\delta_G(H)-k+2\bigr)+s\left(2-\frac{k}{h-1}\right)\ge k\bigl(\delta_G(H)-k+2\bigr).
\end{align*}

For the final assertion, assume $|V(B)|\ge k$ and $c(B)\ge 2(k-2)$. Let $D$ be a cycle in $B$ of length at least $2(k-2)$. Given $x,y\in V(B)$, Menger's theorem provides two internally disjoint $(\{x,y\},D)$-paths with distinct endpoints $a,b\in V(D)$. One of the two $(a,b)$-paths of $D$ has length at least $|D|/2\ge k-2$; concatenating that path with the two $(\{x,y\},D)$-paths gives an $(x,B,y)$-path of length at least $k-2$. Thus, property $(\star)$ holds and the first part applies.
\end{proof}

\begin{lemma}\label{lem:44}
Assume that $H$ is not $2$-connected and that
$|C|<k\bigl(\delta_G(H)-k+2\bigr).$
Let $B$ be an end-block of $H$ with cut-vertex $b$. Then some vertex of $B-b$ has at least $\delta_G(H)-k+2$ neighbors on $C$.
\end{lemma}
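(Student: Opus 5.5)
We argue by contradiction and reduce to Lemma~\ref{lem:43}. Suppose that every vertex of $B-b$ has at most $\delta_G(H)-k+1$ neighbors on $C$. The aim is to show that $B$ then satisfies property $(\star)$ and has at least $k$ vertices. Lemma~\ref{lem:43} would then give $|C|\ge k(\delta_G(H)-k+2)$, contradicting the hypothesis.

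\textbf{Step 1: degrees inside $B$.} Fix $v\in V(B-b)$. Since $H$ is a component of $G-V(C)$, every neighbor of $v$ lies in $V(C)\cup V(H)$. Since $B$ is an end-block with cut-vertex $b$ and $v\neq b$, every neighbor of $v$ in $H$ lies in $B$. Hence
\[
d_B(v)=d_G(v)-d_C(v)\ge \delta_G(H)-(\delta_G(H)-k+1)=k-1\ge 2,
\]
using $k\ge 3$, which holds throughout this subsection.

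\textbf{Step 2: structure of $B$.} Because some vertex of $B-b$ has degree at least $2$ in $B$, the block $B$ is not a single edge, so $B$ is $2$-connected. Moreover $|V(B)|\ge d_B(v)+1\ge k$, which is the size hypothesis of Lemma~\ref{lem:43}.

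\textbf{Step 3: verifying $(\star)$.} Let $x,y\in V(B)$ be distinct. If $|V(B)|\ge 4$, pick $z:=b$ when $b\notin\{x,y\}$, and otherwise any third vertex of $B$. Every vertex $w\in V(B)\setminus\{x,y,z\}$ lies in $B-b$, so Step~1 gives $d_B(w)\ge k-1$. Theorem~\ref{thm:bondy-jackson}, applied to $B$, then yields an $(x,y)$-path in $B$ of length at least $k-1\ge k-2$. If $|V(B)|=3$, then Step~2 forces $k=3$, so $k-2=1$ and the edge $xy$ (or the path through the third vertex) already suffices. Thus $(\star)$ holds, and Lemma~\ref{lem:43} gives the contradiction.

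The only point needing care is Step~1: all non-$C$ neighbors of a non-cut vertex of an end-block lie inside that block. This follows from the block structure together with the fact that $H$ is a full component of $G-V(C)$. Once Step~1 is in hand, the rest is a direct application of the Bondy--Jackson theorem, where the exceptional vertex $z$ absorbs the cut-vertex $b$, whose degree we cannot control.
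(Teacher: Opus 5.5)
Your proof is correct and follows the paper's argument essentially verbatim. Like the paper, you assume the contrary, deduce $d_B(x)\ge k-1$ for every $x\in V(B-b)$ (so $|V(B)|\ge k$), verify property $(\star)$ via the Bondy--Jackson theorem with $z=b$ absorbing the uncontrolled cut-vertex (treating $|V(B)|=k=3$ separately), and conclude from Lemma~\ref{lem:43}; you also state explicitly that all $H$-neighbors of a non-cut vertex of an end-block lie in $B$ and that $B$ is $2$-connected, which the paper leaves implicit.
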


\begin{proof}
Suppose not. Then every $x\in V(B-b)$ satisfies
$d_C(x)\le \delta_G(H)-k+1$, and hence $d_B(x)\ge k-1$. In particular,
$|V(B)|\ge k$.

We verify property $(\star)$ of Lemma~\ref{lem:43}. If $|V(B)|=3$ and
$k=3$, then $B$ is a triangle. Otherwise, $|V(B)|\ge4$. Fix distinct
$x_1,x_2\in V(B)$, and choose $z=b$ if $b\notin\{x_1,x_2\}$; otherwise
choose any $z\in V(B)\setminus\{x_1,x_2\}$. Then every vertex of
$B-\{x_1,x_2,z\}$ lies in $B-b$ and has degree at least $k-1$ in $B$.
By Theorem~\ref{thm:bondy-jackson}, there is an $(x_1,x_2)$-path in $B$
of length at least $k-1$. This verifies property $(\star)$, so Lemma~\ref{lem:43} gives
$|C|\ge k(\delta_G(H)-k+2)$, contradicting the assumption.
\end{proof}

\begin{lemma}\label{lem:45}
Assume that $H$ is not $2$-connected. Let $B$ be a non-end-block of $H$. If $c(B)\ge 5k/2$ and $\delta_G(H)\ge 4k$, then
$|C|\ge k\bigl(\delta_G(H)-k+2\bigr).$
\end{lemma}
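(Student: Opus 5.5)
Assume for contradiction that $|C|<k(\delta_G(H)-k+2)$, and write $\delta:=\delta_G(H)$. The idea is to use the long cycle inside the non-end-block $B$ to build long $(x,H,y)$-paths between attachment vertices lying in different parts of $H$, and then sum these over segments of $C$ via Lemma~\ref{lem:23}(ii) / the counting in~\eqref{eq:41-counting}.

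\textbf{Step 1: two distinct branches.} Since $B$ is not an end-block, the block tree $T(H)$ has at least two branches hanging off $B$ at distinct cut-vertices $b_1\ne b_2$ of $B$. Each branch contains an end-block, say $B_1$ (reached through $b_1$) and $B_2$ (reached through $b_2$), with cut-vertices $b_1'$ and $b_2'$ respectively. Applying Lemma~\ref{lem:44} to $B_1$ and $B_2$ gives vertices $z_1\in V(B_1-b_1')$ and $z_2\in V(B_2-b_2')$ with $d_C(z_j)\ge \delta-k+2$. Thus there are sets $X_j:=N_C(z_j)$ with $|X_j|\ge \delta-k+2\ge 3k+2$.

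\textbf{Step 2: long crossing paths.} Let $D$ be a cycle of $B$ with $|D|\ge 5k/2$. By $2$-connectivity of $B$ (Menger), there are disjoint paths from $\{b_1,b_2\}$ to $D$ ending at distinct $a_1,a_2\in V(D)$. One arc of $D$ between $a_1$ and $a_2$ has length at least $|D|/2\ge 5k/4$. Routing $z_1\to b_1\to a_1$, then along the longer arc, then $a_2\to b_2\to z_2$, gives for any $x\in X_1$ and $y\in X_2$ with $x\ne y$ an $(x,H,y)$-path of length at least $5k/4+2$. More strongly, one can pick the arc so that both $(x,H,y)$ and its counterpart are long. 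Since $C$ is locally longest, every segment of $C$ between a vertex of $X_1$ and a vertex of $X_2$ has length at least $5k/4+2$.

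\textbf{Step 3: counting.} Order $X_1\cup X_2$ cyclically on $C$. Segments joining consecutive vertices from the same $X_j$ have length at least $2$, since they are joined through $z_j$. Segments joining an $X_1$-vertex to an $X_2$-vertex are long. If there are at least $2$ alternations, then
\[
|C|\ge 2|X_1\cup X_2|-O(1)+\Omega(k)\cdot(\#\text{alternations}),
\]
and the first term alone gives about $2(\delta-k+2)$, which is not enough. So the main obstacle is the case of few alternations, for example when $X_1$ and $X_2$ each occupy one arc of $C$.

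To overcome this I would not use only $z_1,z_2$. Instead I would take many vertices of $H$ using $k$-connectivity (Lemma~\ref{lem:22}, Lemma~\ref{lem:23}(iii), which gives a strong attachment of size at least $k$). These provide at least $k$ attachment pairs, and I would show that for every consecutive pair $\{u_i,u_{i+1}\}$ of a maximum strong attachment $T$ of $H$ there is an $(u_i,H,u_{i+1})$-path of length roughly $\delta-t+2$ or more. This can be arranged by using Lemma~\ref{lem:24} or Theorem~\ref{thm:25} on the end-block containing one endpoint, extended through $B$ to the other side. Summing over the $t\ge k$ segments then gives $|C|\ge \sum_i \dstar_H(u_i,u_{i+1})+2s\ge k(\delta-k+2)$. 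The hypotheses $\delta\ge 4k$ and $c(B)\ge 5k/2$ are what absorb the $-t$ loss and make the pairs whose endpoints attach to the same branch still long, via the detour through $D$. Verifying that each consecutive attachment pair can indeed be routed through different branches or through $D$ is the step I expect to require the most care.
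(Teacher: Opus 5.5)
\textbf{There is a genuine gap in the step meant to finish the proof.} Your Steps 1--2 are correct, but, as you note, they only give $|C|$ of order $2\delta$. To finish, you want a per-pair bound: for every consecutive pair of a maximum strong attachment $T=\{u_1,\dots,u_t\}$ of $H$ itself, a $(u_i,H,u_{i+1})$-path of length about $\delta-t+2$, with pairs on the same side made long by a detour through $D$. This per-pair bound fails.

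\begin{itemize}
\item \emph{No detour exists.} Suppose all $H$-neighbours of $u_i$ and $u_{i+1}$ lie in the branch $V_x$ hanging off $B$ at a cut-vertex $x$ (with $x$ included). Then every $(u_i,H,u_{i+1})$-path stays inside $V_x$, because entering $B-x$ and returning would use $x$ twice.
\item \emph{Branches can be tiny.} Suppose $x$ carries a pendant edge $xz$ of $H$, with $N_H(u_i)=\{z\}$ and $N_H(u_{i+1})=\{x\}$. Then $\{u_i,u_{i+1}\}$ is strongly attached, yet $\dstar_H(u_i,u_{i+1})=3$. Such a pair can be consecutive in a maximum strong attachment with $t$ close to $k$, for instance by placing $u_{i+1}$ right after an arc of $C$ containing only neighbours of $z$.
\item \emph{The cited tools do not help.} Lemma~\ref{lem:24}, Theorem~\ref{thm:25} and Lemma~\ref{lem:23}(iii) all require the component of $G-C$ to be $2$-connected. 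Applied to an end-block, they only control paths inside that end-block, and Lemma~\ref{lem:44} does not make end-blocks large.
\end{itemize}

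\textbf{The missing idea is to eliminate the branches rather than route through them.} The paper deletes $V_x\setminus\{x\}$ for every cut-vertex $x$ of $B$. It then joins $x$ to $N_C(u_x)$, where $u_x$ is the vertex of large $C$-degree given by Lemma~\ref{lem:44} in an end-block of that branch. In the resulting graph $G_1$, $(G_1,C,B)$ is a $k$-triple system whose component $B$ is $2$-connected. Consequently every strongly attached pair satisfies $\dstar_B(u_i,u_{i+1})\ge c(B)/2\ge 5k/4$, and Lemma~\ref{lem:23}(iii) applies to $B$. Pairs like the one above stop being strongly attached, since both endpoints now see only $x$, and vertices of this kind are paid for through the term $2s$.

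\textbf{The reduction has a cost that your sketch does not anticipate.} Cut-vertices of $B$ are only guaranteed $\delta-k+2$ neighbours on $C$, so the average degree is only $r\ge\delta-(k-2)\,cu(B)/h$. The paper compensates as follows.
\begin{itemize}
\item By Lemma~\ref{lem:23}(i), the $S$-neighbourhoods of distinct cut-vertices are disjoint, which gives $s\ge cu(B)(\delta-k+2-t)$.
\item Next it shows $cu(B)<k/2$, since otherwise $|C|>kt+2s\ge k(\delta-k+2)$.
\item It then shows $t<9k/5$.
\item Finally, it applies the average-degree bound to only $k$ of the pairs and $\dstar_B\ge k$ to the remaining $t-k$. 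The hypothesis $\delta\ge 4k$ makes the leftover term $\frac{cu(B)}{5}(8\delta-10k+20-8t)$ positive.
\end{itemize}
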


\begin{proof}
Assume for the contradiction that
\begin{equation}\label{eq:45-contr}
|C|<k\bigl(\delta_G(H)-k+2\bigr).
\end{equation}
View the block structure $T(H)$ as a tree rooted at $B$, where $B$ is a vertex in $T(H)$. For each cut-vertex $x\in V(B)$, let
$T_x$ be the component of $T(H)-B$ containing $x$, and let $V_x\subseteq V(H)$ be the
union of the vertex sets of the blocks in $T_x$. Choose an end-block
$B_x$ in $T_x$, with cut-vertex $b_x$ in $H$. By Lemma~\ref{lem:44}, there is a
vertex $u_x\in V(B_x-b_x)$ such that
$d_C(u_x)\ge \delta_G(H)-k+2.$

Let $G_1$ be obtained from $G[H\cup C]$ by deleting
$V_x\setminus\{x\}$ for every cut-vertex $x\in V(B)$, and then, for every
cut-vertex $x\in V(B)$ and every $c\in N_C(u_x)$, adding the edge $xc$.
Then every non-cut-vertex of $B$ has degree at least $\delta_G(H)$ in
$G_1$, while every cut-vertex of $B$ has at least
$\delta_G(H)-k+2$ neighbors on $C$ in $G_1$.

We show that $(G_1,C,B)$ is a $k$-triple system. Indeed, every cut-vertex
of $B$ has at least $\delta_G(H)-k+2\ge k$ neighbors on $C$ in $G_1$.
For a non-cut-vertex $y$ of $B$, take its original $k$ paths from $y$ to
distinct vertices of $C$ in $G$ that intersect only in $y$. Whenever one
of them first leaves $B$ through a cut-vertex $x$, replace the part after
$x$ by a new edge from $x$ to a vertex of $C$ not used by the other paths.
This is possible since at most one path uses each such $x$, and each $x$
has at least $k$ new neighbors on $C$. This gives the desired $k$ paths
from $y$ to distinct vertices of $C$ in $G_1$. 
The locally longest
property follows because, for every $a,b\in V(C)$, every $(a,B,b)$-path
in $G_1$ lifts to an $(a,H,b)$-path in $G$ of at least the same length.

Let $h:=|V(B)|$, and let $T=\{u_1,\dots,u_t\}$ be a maximum strong
attachment of $B$ to $C$ in $G_1$. By Lemma~\ref{lem:23}(iii), since
$(G_1,C,B)$ is a $k$-triple system and $h\ge c(B)\ge 5k/2$, we have
$t\ge \min\{k,h+|D(T)|\}=k$.
Moreover, the 2-connectivity of $B$ implies that each strongly attached pair satisfies
\begin{equation}\label{eq:45-longpair}
\dstar_B(u_i,u_{i+1})\ge \frac{c(B)}{2}\ge \frac{5k}{4}>k.
\end{equation}

Let $cu(B)$ be the number of cut-vertices of $B$ in $H$.
Since $B$ is not an end-block, $cu(B)\ge 2$.
Set $S:=(N_{G_1}(B)\cap V(C))\setminus T$ and $s:=|S|$.
Recall that every cut-vertex of $B$ has at least $\delta_G(H)-k+2$ neighbors on $C$ in $G_1$, and thus has at least
$\delta_G(H)-k+2-t$ neighbors in $S$. By Lemma~\ref{lem:23}(i), these
neighbor sets are pairwise disjoint. Hence
\begin{equation}\label{eq:45-slower}
s\ge cu(B)\bigl(\delta_G(H)-k+2-t\bigr).
\end{equation}

We show that $cu(B)<k/2$. Suppose to the contrary that
$cu(B)\ge k/2$. Summing~\eqref{eq:45-longpair} over
$i\in\{1,2,\dots,t\}$ gives $|C|>kt$, and hence
\eqref{eq:45-contr} implies
$t<|C|/k<\delta_G(H)-k+2$. Using Lemma~\ref{lem:23}(ii) and then
applying~\eqref{eq:45-slower} together with $cu(B)\ge k/2$, we get
\[
|C|> kt+2s\ge kt+2cu(B)(\delta_G(H)-k+2-t)
\ge kt+k(\delta_G(H)-k+2-t)
= k\bigl(\delta_G(H)-k+2\bigr),
\]
which contradicts~\eqref{eq:45-contr}. This proves $cu(B)<k/2$.

Let $r:=\frac{1}{h}\sum_{x\in V(B)}d_{G_1}(x).$
Since $h\ge c(B)\ge 5k/2$ and $cu(B)<k/2$, we have
\begin{equation}\label{eq:45-r}
r\ge \frac{cu(B)(\delta_G(H)-k+2)+(h-cu(B))\delta_G(H)}{h}
= \delta_G(H)-\frac{(k-2)cu(B)}{h}
\ge \delta_G(H)-\frac{k}{5}.
\end{equation}

We now prove $t< \frac{9k}{5}$. Suppose to the contrary that $t\ge \frac{9k}{5}$. Apply the second statement of Lemma~\ref{lem:23}(iii) to any $k$ of the $t$ strong pairs, and use~\eqref{eq:45-longpair} on the remaining $t-k$ pairs. We have 
\begin{align*}
|C|
&\ge k\left(r+2-t-\frac{s}{h}\right)+(t-k)\frac{5k}{4}+2s\ge k\left(\delta_G(H)-\frac{k}{5}+2-t-\frac{s}{h}\right)+(t-k)\frac{5k}{4}+2s \\
&= k\bigl(\delta_G(H)-k+2\bigr)+\frac{k(5t-9k)}{20}+s\left(2-\frac{k}{h}\right)\ge k\bigl(\delta_G(H)-k+2\bigr),
\end{align*}
which contradicts~\eqref{eq:45-contr}. Hence $t<\frac{9k}{5}.$

Finally, by Lemma~\ref{lem:23}(ii),
$|C|\ge \sum_{i=1}^t \dstar_B(u_i,u_{i+1})+2s$.
Combining Lemma~\ref{lem:23}(iii) with~\eqref{eq:45-r}, every consecutive
strong pair satisfies
\[
\dstar_B(u_i,u_{i+1})\ge r+2-t-\frac{s}{h}
\ge \delta_G(H)-\frac{(k-2)cu(B)}{h}+2-t-\frac{s}{h}.
\]
We use this bound for $k$ pairs, and use the bound~\eqref{eq:45-longpair}
for the remaining $t-k$ pairs. Hence
\begin{align*}
|C|
&\ge k\left(\delta_G(H)-\frac{(k-2)cu(B)}{h}+2-t-\frac{s}{h}\right)
 +(t-k)k+2s \\
&= k\bigl(\delta_G(H)-k+2\bigr)
 +s\left(2-\frac{k}{h}\right)-\frac{k(k-2)cu(B)}{h} \\
&\ge k\bigl(\delta_G(H)-k+2\bigr)
 +\frac{8}{5}cu(B)(\delta_G(H)-k+2-t)
 -\frac{2(k-2)}{5}cu(B) \\
&= k\bigl(\delta_G(H)-k+2\bigr)
 +\frac{cu(B)}{5}\bigl(8\delta_G(H)-10k+20-8t\bigr).
\end{align*}
Since $t<9k/5$ and $\delta_G(H)\ge 4k$, the last extra term is positive,
so $|C|>k(\delta_G(H)-k+2)$, contradicting~\eqref{eq:45-contr} and proving the lemma.
\end{proof}

\begin{theorem}\label{thm:42}
Let $(G,C,H)$ be a $k$-triple system with $k\geq 3$. If $\delta_G(H)\ge 4k$ and some block of $H$ has circumference at least $5k/2$, then
$|C|\ge k\bigl(\delta_G(H)-k+2\bigr).$
\end{theorem}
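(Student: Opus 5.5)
We split according to the position of the block in the block structure of $H$. Let $B$ be a block of $H$ with $c(B)\ge 5k/2$. There are three cases: $H$ is $2$-connected, $B$ is a non-end-block, or $B$ is an end-block.

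\emph{$B$ is a non-end-block.} Then $H$ is not $2$-connected, and Lemma~\ref{lem:45} applies directly, since $c(B)\ge 5k/2$ and $\delta_G(H)\ge 4k$. This gives $|C|\ge k(\delta_G(H)-k+2)$.

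\emph{$H$ is $2$-connected, or $B$ is an end-block.} In both situations $B$ is an end-block of $H$; when $H$ is $2$-connected, $H=B$ is its own end-block, and this case is covered by the proof of Lemma~\ref{lem:43}. We want to invoke the final assertion of Lemma~\ref{lem:43}, which needs $|V(B)|\ge k$ and $c(B)\ge 2(k-2)$. Both hold because
\[
|V(B)|\ge c(B)\ge \tfrac{5k}{2}\ge k
\quad\text{and}\quad
\tfrac{5k}{2}\ge 2k-4 .
\]
Lemma~\ref{lem:43} also requires $k\ge 3$, which is part of the hypotheses of Theorem~\ref{thm:42}. So Lemma~\ref{lem:43} yields $|C|\ge k(\delta_G(H)-k+2)$.

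The two cases exhaust all possibilities, since every block is either an end-block (including $H$ itself when $H$ is $2$-connected) or a non-end-block. This proves the theorem.

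The one point to check is that Lemma~\ref{lem:43} genuinely covers the $2$-connected case. Its proof begins with exactly that case, setting $B_0=H$ and choosing an arbitrary $b_0$. Under the paper's convention an end-block without a cut-vertex is the whole graph, so this case is included. No further computation is needed; all the substantive work is in Lemmas~\ref{lem:43} and~\ref{lem:45}.
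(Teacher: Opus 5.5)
Your proposal is correct and is essentially the paper's own proof: you pick a block $B$ with $c(B)\ge 5k/2$, apply the final assertion of Lemma~\ref{lem:43} when $B$ is an end-block (including $B=H$ when $H$ is $2$-connected), and apply Lemma~\ref{lem:45} when $B$ is not an end-block. Your check that $|V(B)|\ge c(B)\ge 5k/2\ge k$ and $5k/2\ge 2(k-2)$ is exactly what Lemma~\ref{lem:43} needs, so nothing is missing.
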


\begin{proof}
Choose a block $B$ of $H$ with $c(B)\ge 5k/2$. The assertion follows from
Lemma~\ref{lem:43} if $B$ is an end-block, and from Lemma~\ref{lem:45} otherwise.
\end{proof}

\subsection{Completing the proofs}\label{sec:final proof}

We are ready to finish the proof of Theorem~\ref{thm:local-jung}.

\begin{proof}[Proof of Theorem~\ref{thm:local-jung}]
Let $(G,C,H)$ be a $k$-triple system such that $\delta:=\delta_G(H)\ge 6k$ and $p:=p(H)\ge k-1$.
We claim that $|C|\ge k\bigl(\delta-k+2\bigr).$
The case $k=2$ follows from Fan~\cite{Fan1990}, so we may assume $k\ge 3$. 

If $p\le \delta-k+1$, then Theorem~\ref{thm:41} gives the claim. Hence we may assume that
$p\ge \delta-k+2\ge 5k+2.$
If some block of $H$ has circumference at least $5k/2$, then Theorem~\ref{thm:42} gives the desired inequality. Therefore, every block of $H$ has circumference less than $5k/2$, and hence $c(H)<5k/2$.

Now we apply Lemma~\ref{lem:31}(ii) to obtain a set $U=\{u_1,\dots,u_p\}$ together with pairwise internally disjoint linking paths as in Proposition~\ref{prop:32}, such that
$\sum_{i=1}^p d_H(u_i)\le (2c(H)-2)(p-1)\le (5k-2)(p-1).$
Since every $u_i$ has degree at least $\delta$ in $G$, it holds that
\begin{align*}
e(U,C)
&= \sum_{i=1}^p d_C(u_i)
=\sum_{i=1}^p \bigl(d_G(u_i)-d_H(u_i)\bigr)\ge p\delta-(5k-2)(p-1) \\
&= p(\delta-5k+2)+5k-2\ge (5k+2)(\delta-5k+2)+5k-2.
\end{align*}

Since each vertex of $C$ has at most $p$ neighbors in $U$ while $e(U,C)\ge p(\delta-5k+2)+5k-2>p$, the neighbor set $N$ of $U$ on $C$ has size at least $2$. 
Applying~\eqref{eq:41-counting} to $U$ gives
$|C|\ge e(U,C)+|N_C(U)|\ge e(U,C)$. 
We assert that the above lower bound for $e(U,C)$ is at least $k(\delta-k+2)$ whenever $\delta\ge 6k$; this is because $(5k+2)(\delta-5k+2)+5k-2-k(\delta-k+2)=(4k+2)\delta-24k^2+3k+2>0.$
This proves $|C|\ge k(\delta-k+2)$.
\end{proof}

Finally, we prove Theorem~\ref{thm:local-bondy}.

\begin{proof}[Proof of Theorem~\ref{thm:local-bondy}]
Let $(G,C,H)$ be a $k$-triple system. 
Assume for contradiction that $p(H)\ge k$. Let $c:=|C|$. Since
$\delta_G(H)\ge \frac{n+k(k-1)}{k+1}\ge 6k$
for $ n\ge 5k^2+7k,$
Theorem~\ref{thm:local-jung} yields
$c\ge k\left(\frac{n+k(k-1)}{k+1}+2-k\right)=\frac{k(n+2)}{k+1}.$
Hence
$|V(G-C)|=n-c\le n-\frac{k(n+2)}{k+1}=\frac{n-2k}{k+1}.$
Now every vertex $v\in V(H)$ has at most $|V(H)|-1\le |V(G-C)|-1$ neighbors in $H$, so
$d_C(v)\ge d_G(v)-d_H(v)\ge \frac{n+k(k-1)}{k+1}-\frac{n-2k}{k+1}+1=k+1.$
Thus, $H$ is locally $(k+1)$-connected to $C$. 
Since $\delta_G(H)-k\ge \frac{n+k(k-1)}{k+1}-k=\frac{n-2k}{k+1}\ge |V(G-C)|,$
we have
$k\le p(H)\le |V(G-C)|\le \delta_G(H)-k.$
Applying Theorem~\ref{thm:41} (with $k+1$ in place of $k$ therein) gives
$c\ge (k+1)\bigl(\delta_G(H)-k+1\bigr)
\ge (k+1)\left(\frac{n+k(k-1)}{k+1}-k+1\right)=n-k+1.$
But we have $|V(H)|\ge p(H)\ge k$, and therefore
$n=|V(G)|\ge c+|V(H)|\ge (n-k+1)+k=n+1,$
a contradiction. This completes the proof of Theorem~\ref{thm:local-bondy}.
\end{proof}

\section{Concluding remarks}\label{sec:conclude}
In this paper, we prove Bondy's conjecture for all sufficiently large graphs.
We conclude with a discussion of related conjectures and several new problems motivated by our results. 

\subsection{On a conjecture of Fan}
Fan~\cite{Fan1990} proposed the following related conjecture in 1990, stated here in our notation: 
if $(G,C,H)$ is a $k$-triple system such that $|V(H)|\ge k-1$ and the average degree of $H$ in $G$ is $r$, 
then $|C|\ge k(r-k+2).$
The conjecture was proved by Fan~\cite{Fan1990} for $2\le k\le 4$, but it is false for every $k\ge 5$ (see \cite{Nagayama2003}).

The methods developed in this paper allow us to establish the conjectured conclusion under the additional assumptions that $H$ is $2$-connected and relatively dense. 
This may be viewed as supporting evidence for Jung's conjecture in the dense regime (i.e., when $H$ is dense). For a graph $H$, let $d(H)$ denote its average degree.

\begin{theorem}\label{lem:dense-H}
Let $(G,C,H)$ be a $k$-triple system.
Define $r=\sum_{v\in V(H)}d_G(v)/|V(H)|.$
If $H$ is 2-connected with $d(H)\geq k-2$ and $r\geq 2k-1$,\footnote{The condition $r\geq 2k-1$ can be replaced with $3\leq r\leq 2k-4$; see the remark at the end of this subsection.} then $|C|\geq k(r-k+2)$.
\end{theorem}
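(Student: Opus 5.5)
We use the attachment argument of Lemma~\ref{lem:23}: one lower bound for the segments of $C$ comes from the average degree $r$, the other from the density $d(H)$. Let $h:=|V(H)|$, let $T=\{u_1,\dots,u_t\}$ be a maximum strong attachment of $H$ to $C$, and let $s:=|\NC(H)\setminus T|$.

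\emph{Step 1: bound $t$ from below.} Since $H$ is $2$-connected, $h\ge 3$. As $d(H)\ge k-2$ and the average degree of $H$ is at most $h-1$, we get $h\ge k-1$. By Lemma~\ref{lem:23}(iii), $t\ge\min\{k,h+|D(T)|\}$. If $h\ge k$ this gives $t\ge k$. The boundary case $h=k-1$ forces $H$ to be complete. Then every vertex of $T$ adjacent to $H$ with degree at least $2$ into $H$ lies in $D(T)$, and I expect $|D(T)|\ge 1$, giving $t\ge k$. If $D(T)$ is empty, every attachment vertex has a single neighbour in $H$. In that case I would use $r\ge 2k-1$: each vertex of $H$ has about $r-k+2$ neighbours on $C$, and $H=K_{k-1}$ is Hamiltonian-connected, so every segment has length at least $k$. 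This should give the bound directly.

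\emph{Step 2: lower bounds on segments.} By Lemma~\ref{lem:23}(iii), every strongly attached pair satisfies
\[
\dstar_H(u_i,u_{i+1})\ge r+2-t-\frac{s}{h}.
\]
Lemma~\ref{lem:24} gives the second bound. Both $u_i$ and $u_{i+1}$ have a neighbour in $H$, so $e(\{u_i,u_{i+1}\},H)\ge 2$ and
\[
\dstar_H(u_i,u_{i+1})\ge d(H)+\frac{2}{h}>k-2.
\]
Since the length is an integer, $\dstar_H(u_i,u_{i+1})\ge k-1$. I want the sharper bound $\ge k$, which is what the assumption $d(H)\ge k-2$ should deliver. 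The approach is to add the two neighbourhoods of the attachment vertices to the degree count of Fan's Theorem~\ref{thm:fan-average} more carefully, or to use that the edge count is strictly larger unless $H$ is regular of degree exactly $k-2$. That extremal case needs separate treatment.

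\emph{Step 3: combine, as in the proof of Lemma~\ref{lem:43}.} If $t\ge r-k+2$, the Step~2 bound $\dstar\ge k$ on all pairs already gives $|C|\ge kt\ge k(r-k+2)$. Otherwise, use the $r$-bound on $k$ of the pairs and the bound $k$ on the remaining $t-k$ pairs. With Lemma~\ref{lem:23}(ii) this gives
\[
|C|\ge k\Bigl(r+2-t-\frac{s}{h}\Bigr)+(t-k)k+2s=k(r-k+2)+s\Bigl(2-\frac{k}{h}\Bigr).
\]
This is at least $k(r-k+2)$ when $h\ge k/2$, which holds since $h\ge k-1$ and $k\ge 2$.

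\emph{Main obstacle.} The hardest step is upgrading the density bound in Step~2 from $k-1$ to $k$ per segment. If only $k-1$ is available, the final sum loses $t-k$, which must be recovered from the $2s$ term or from $r\ge 2k-1$. I would first try to absorb this loss using $r\ge 2k-1$: it forces each vertex of $H$ to have many neighbours on $C$, so either $s$ is large or $t$ is large, and in the large-$t$ case we compare $(k-1)t$ with $k(r-k+2)$ directly.
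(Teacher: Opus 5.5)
Your Step~1 matches the paper. Do make explicit that when $D(T)=\emptyset$ every vertex of $\NC(H)$ has exactly one neighbour in $H$; then for $t=k-1$ you get $s=(k-1)(r-k+1)$ and $|C|\ge (k-1)k+2s\ge k(r-k+2)$, with no need for $r\ge 2k-1$. Your Step~3 is exactly the paper's argument once every strongly attached pair satisfies $\dstar_H(u_i,u_{i+1})\ge k$. The paper has that per-pair bound only when $d(H)\ge k-1$ (Lemma~\ref{lem:24} gives $\dstar_H>d(H)\ge k-1$) or $k\le 4$ ($2$-connectivity gives $\dstar_H\ge 4$).

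The gap is Step~2. In the remaining regime $k-2\le d(H)<k-1$, $k\ge 5$, the upgrade you hope for is false, so no sharper single-pair count will produce it. Take $k=5$ and $H=K_{2,6}$, so $d(H)=3=k-2$ and $H$ is not regular. If $u_i$ and $u_{i+1}$ are adjacent in $H$ only to the two vertices of the side of size two, every $(u_i,H,u_{i+1})$-path has length exactly $4=k-1$. This is easily realised in a $5$-triple system with $r\ge 9$ by giving the six other vertices many private, widely spaced neighbours on a long cycle.

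Your fallback does not close the gap either. Let $d(H)=k-2$, $s=0$ and $t=r-k+3$ (take $r$ integral; this is consistent with $h(r-d(H))=e(H,C)\le th+s$). Then both the bound $r+2-t-s/h$ of Lemma~\ref{lem:23}(iii) and your density bound give only $k-1$ per pair. All you can conclude is $|C|\ge (k-1)(r-k+3)=k(r-k+2)-(r-2k+3)<k(r-k+2)$. So "either $s$ is large or $t$ is large" leaves an uncovered intermediate range of $t$.

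The missing idea is to keep the term $e(\{u_i,u_{i+1}\},H)/h$ from Lemma~\ref{lem:24} and sum it over all $t$ pairs. Each edge between $T$ and $H$ is counted twice, and by Lemma~\ref{lem:23}(i), $e(H,T)=e(H,C)-s=h(r-d(H))-s$. Hence $\sum_{i}\dstar_H(u_i,u_{i+1})\ge t\,d(H)+2(r-d(H))-2s/h$; the large edge counts you need exist only in this aggregate form. The paper then splits on $r$ versus $d(H)+t$.

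If $r\ge d(H)+t$, averaging picks $t-k$ pairs of total length at least $(t-k)(d(H)+2)-2s/h\ge (t-k)k-2s/h$. Using the $r$-bound on the other $k$ pairs gives $|C|\ge k(r-k+2)+s\bigl(2-\frac{k+2}{h}\bigr)\ge k(r-k+2)$.

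If $r<d(H)+t$, the whole sum gives $|C|\ge (t-2)d(H)+2r\ge (r-d(H)-2)d(H)+2r$. This is concave in $d(H)\in[k-2,k-1]$ and, precisely because $r\ge 2k-1$, is minimised at $d(H)=k-2$, where it equals $k(r-k+2)$. That is the only place the hypothesis $r\ge 2k-1$ is genuinely needed, whereas your outline never actually uses it.
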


\begin{proof}
Let $T=\{u_1,u_2,\ldots,u_t\}$ be a maximum strong attachment of $H$ to $C$,  $S=N_C(H)\backslash T$ and $s=|S|$.
Let $h=|V(H)|$. Then $h\geq d(H) +1\geq k-1$.
By the second statement in Lemma~\ref{lem:23}(iii), every strongly attached pair $\{u_i,u_{i+1}\}$ satisfies
\begin{align}\label{equ:d*}
d^{*}_H(u_i,u_{i+1})\geq r+2-t-\frac{s}{h}
\end{align}
and by Lemma~\ref{lem:23}(ii),
\begin{align}\label{equ:C-1}
|C|\geq {\textstyle\sum_{i=1}^t} d^{*}_H(u_i,u_{i+1})+2s.
\end{align}
By Lemma~\ref{lem:24}, for every strongly attached pair $\{u_i,u_{i+1}\}$,
$d^*_H(u_i,u_{i+1})\ge d(H)+\frac{e(\{u_i,u_{i+1}\},H)}{h}.$
Summing this inequality over all consecutive pairs of $T$, and observing that every edge between $T$ and $H$ is counted twice, gives
\begin{align}\label{equ:sum-d*}
{\textstyle\sum_{i=1}^t} d^{*}_H(u_i,u_{i+1})\geq t\cdot d(H)+\frac{2e(H,T)}{h}.
\end{align}

Let $D(T)=\{v\in T: d_H(v)\geq 2\}$.
By Lemma~\ref{lem:23}(iii), we have $t\geq \min\{k,h+|D(T)|\}$.
Hence, either $t\geq k$, or $t=h=k-1$ and $D(T)=\emptyset$.
Suppose that the latter case occurs.
Then $h-1\geq d(H)\geq k-2$ forces $d(H)=k-2$ and thus $H$ is a clique of size $k-1$.
Since $D(T)=\emptyset$, every vertex in $N_C(H)$ has exactly one neighbor in $H$.
This implies that $s+t=e(H,C)=\sum_{v\in V(H)}(d_G(v)-d_H(v))
=h(r-(h-1))=(k-1)(r-k+2)$, and thus $s=(r-k+1)(k-1)$.
For each strongly attached pair $\{u_j,u_{j+1}\}$, choose distinct 
$a\in N_H(u_j)$ and $b\in N_H(u_{j+1})$. Since $H\cong K_{k-1}$, there is an $(a,b)$-path of length $k-2$ in $H$, and hence a $(u_j,H,u_{j+1})$-path of length $k$.
The local maximality of $C$ then gives $|C[u_j,u_{j+1}]|\geq \dstar_H(u_j,u_{j+1})\ge k$.
Thus, as $k\geq 2$, by Lemma~\ref{lem:23}(ii),
$|C|\geq (k-1)k+2s=(k-1)k+2(k-1)(r-k+1)\geq k(r-k+2).$
We may therefore assume that $t\geq k$.

Suppose that either $d(H)\geq k-1$ or $k\leq 4$ holds.
By Lemma \ref{lem:24}, for any strongly attached pair $\{u_i,u_{i+1}\}$
we have $d^{*}_H(u_i,u_{i+1})\geq d(H)+\frac{e(\{u_i,u_{i+1}\},H)}{h}>d(H)$; also, since $H$ is 2-connected, $d^{*}_H(u_i,u_{i+1})\geq 4$.
This shows that in either case, $d^{*}_H(u_i,u_{i+1})\geq k$.
By \eqref{equ:d*} and \eqref{equ:C-1}, we obtain
$|C|\geq k(r-t+2-\frac{s}{h})+(t-k)k+2s=k(r-k+2)+s(2-\frac{k}{h})\geq k(r-k+2).$
We may therefore assume that $k-2\leq d(H)< k-1$ and $k\geq 5$.

In the rest of the proof, we distinguish between two cases.
First, consider the case $r\geq d(H)+t$.
Averaging~\eqref{equ:sum-d*}, there exist $(t-k)$ strongly attached pairs, without loss of generality, say
$\{u_i,u_{i+1}\}_{1\leq i\leq t-k}$, such that
\begin{align}
\sum_{i=1}^{t-k} d^{*}_{H}(u_i,u_{i+1})
&\geq (t-k)\cdot d(H)+\frac{2(t-k)}{t}\cdot \frac{e(H,T)}{h}\nonumber= (t-k)\cdot d(H)+\frac{2(t-k)}{t}\cdot \frac{e(H,C)-s}{h}\nonumber\\
&=(t-k)\cdot d(H)+\frac{2(t-k)}{t}\cdot (r-d(H))-\frac{2s(t-k)}{th}.\nonumber
\end{align}
Since $r-d(H)\geq t\geq k$ and $d(H)\geq k-2$, we obtain that
$\sum_{i=1}^{t-k}d^{*}_{H}(u_i,u_{i+1})\geq (t-k)(d(H)+2)-\frac{2s}{h}\geq (t-k)k-\frac{2s}{h}.$
Using \eqref{equ:d*}, \eqref{equ:C-1}, and the fact $h\geq k-1$ and $k\geq 5$, we derive the desired lower bound
\begin{align*}
|C|&\geq \sum_{i=1}^{t-k}d^{*}_{H}(u_i,u_{i+1})+\sum_{i=t-k+1}^{t}d^{*}_{H}(u_i,u_{i+1})+2s\geq \left((t-k)k-\frac{2s}{h}\right)+k(r-t+2-\frac{s}{h})+2s\\
&= k(r-k+2)+s\left(2-\frac{2}{h}-\frac{k}{h}\right)\geq k(r-k+2).
\end{align*}

It remains to consider the case $r<d(H)+t$. That is, $t>r-d(H)$. By \eqref{equ:C-1} and \eqref{equ:sum-d*},
\begin{align*}
|C|&\geq t\cdot d(H)+\frac{2e(H,T)}{h}+2s=t\cdot d(H)+\frac{2(e(H,C)-s)}{h}+2s\\
&=t\cdot d(H)+2(r-d(H))-\frac{2s}{h}+2s=(t-2)\cdot d(H)+2r+2s-\frac{2s}{h}\\
&\geq (t-2)\cdot d(H)+2r\geq (r-d(H)-2)\cdot d(H)+2r.
\end{align*}
Set $f(x):=(r-x-2)x+2r$, where $x:=d(H)\in [k-2,k-1]$.
It is concave and symmetric about $x=\frac{r-2}{2}$.
Since $r\geq 2k-1$, $(k-2)+(k-1)\leq r-2$ and thus $f(k-1)\geq f(k-2)$.
Therefore,
$|C|\geq f(x)\geq \min\{f(k-2),f(k-1)\}=f(k-2)=(r-k)(k-2)+2r=k(r-k+2),$
completing the proof.
\end{proof}

We remark that Theorem~\ref{lem:dense-H} remains true if the condition $r\geq 2k-1$ is replaced with $3\le r\le 2k-4$. To see this, set $k':=\lfloor (r+1)/2\rfloor\ge 2$. Then $r\ge 2k'-1$ and
$2\le k'\le k$. Since a $k$-triple system is also a $k'$-triple system, and $d(H)\ge k-2\ge k'-2$, we can apply Theorem~\ref{lem:dense-H} (with respect to $k'$) to derive $|C|\ge k'(r-k'+2).$
Let $g(x)=x(r+2-x)$. Since $g$ is concave and symmetric about
$(r+2)/2$, and since $k+k'\ge r+2$ (this is because $r\le 2k-4$), we have $g(k')\ge g(k)$. Therefore $|C|\ge g(k')\ge g(k)=k(r-k+2).$

\subsection{On a conjecture of Voss}

Following Veldman~\cite{Veldman1983}, for an integer $\ell\ge 1$ we say that a cycle $C$ in a graph $G$ is a {\it $D_\ell$-cycle} if every component of $G-C$ has order less than $\ell$. 
Voss~\cite[p.~151]{Voss1991} conjectured that if $G$ is $k$-connected and $\delta(G)\ge r\ge 2k$, then every longest cycle of $G$ is either a $D_{k-1}$-cycle or has length at least $k(r-k+2)$. 

This conjecture was proved for $2\le k\le 4$ (see~\cite[p.~151]{Voss1991}). In particular, the case $k=2$ corresponds to Dirac's long cycle theorem~\cite{Dirac1952}. 
The next result shows that the conjecture fails for all $k\ge 6$.

\begin{theorem}
For every integer $k\geq 6$,
there exist an integer $r\geq 2k$ and a $k$-connected graph $G$ with minimum degree at least $r$, circumference smaller
than $k(r-k+2)$, and a longest cycle $C$ of $G$ that is not a $D_{k-1}$-cycle.
\end{theorem}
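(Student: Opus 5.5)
The plan is to build an explicit counterexample. Its idea: a large independent separator $S$ forces every cycle to alternate between $S$ and small ``gadgets''. Each gadget has high minimum degree, but a cycle can pass through it only along a path of length at most $4$.

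\textbf{Construction.} Fix $k\ge 6$ and choose an integer $s$ large in terms of $k$; the conditions needed are listed below. Put $r:=s+1$, choose $b\ge s+1$ and $N\ge s+1$, and define $G$ as follows.
\begin{itemize}
\item Let $S$ be an independent set of size $s$.
\item Take $N$ disjoint gadgets $Q_1,\dots,Q_N$, where $Q_j=K_1\vee \overline{K_b}$ is a star with centre $c_j$ and $b$ leaves.
\item Join every leaf of every gadget to every vertex of $S$.
\end{itemize}
The degrees are as follows: a leaf has degree $s+1=r$, a centre has degree $b\ge r$, and a vertex of $S$ has degree $Nb\ge r$. So $\delta(G)\ge r$, and $r\ge 2k$ as soon as $s\ge 2k-1$.

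\textbf{Connectivity.} For $k$-connectivity, delete any set $X$ with $|X|<k$. Since $s,b\ge k$, some vertex of $S$ survives and some leaf of each gadget survives. Every surviving leaf is adjacent to every surviving vertex of $S$, and every surviving centre still has a surviving leaf among its $b$ leaves. Hence $G-X$ is connected.

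\textbf{Circumference.} Consider any cycle $D$. Its vertices in $S$ cut it into arcs between consecutive $S$-vertices, because leaves are adjacent only to $S$ and to their own centre, and $S$ is independent. The interior of each such arc lies inside a single gadget, and it is either a single leaf (arc length $2$) or leaf--centre--leaf (arc length $4$). Therefore $|D|\le 4|V(D)\cap S|\le 4s$.

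Conversely, since $N\ge s$, there is a cycle of length exactly $4s$. Order $S$ cyclically as $x_1,\dots,x_s$, and join $x_i$ to $x_{i+1}$ through two leaves and the centre of a distinct gadget $Q_i$. So the circumference is $4s$, and this cycle $C$ is a longest cycle.

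\textbf{Failure of the two alternatives.} The longest cycle $C$ uses every vertex of $S$. Since $N\ge s+1$, some gadget $Q_j$ is untouched by $C$. Because all of its leaves' neighbours in $S$ lie on $C$, this gadget is a component of $G-C$ of order $b+1\ge k-1$. Hence $C$ is not a $D_{k-1}$-cycle.

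Finally, $C$ is short enough because
\[
k(r-k+2)-4s=k(s-k+3)-4s=(k-4)s-k(k-3),
\]
which is positive once $s>k(k-3)/(k-4)$; this bound is finite for $k\ge 5$, so in particular for all $k\ge 6$. Taking
\[
s>\max\{2k,\;k(k-3)/(k-4)\}
\]
gives all the required properties.

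\textbf{Main obstacle.} The crux is the circumference bound $|D|\le 4s$. It rests on the structural fact that every arc between consecutive $S$-vertices of $D$ stays inside one star gadget and has length at most $4$. I will check this carefully, including the case of arcs consisting of a single leaf; the rest is routine verification.
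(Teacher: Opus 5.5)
Your construction is correct and follows the same template as the paper: a separator (the paper's clique $A$ of order $2k$, your independent set $S$) joined to components in which every path has at most three vertices (triangles and a fully joined star there, stars with unattached centres here), so every cycle has length at most four times the separator size, and an untouched star remains as a component of $G-V(C)$ of order at least $k-1$. The one substantive difference is that you leave the separator size $s$ free instead of fixing it at $2k$ with $r=2k+1$, where the requirement $k(k+3)>8k$ forces $k\ge 6$. As a result, your inequality $(k-4)s>k(k-3)$ also gives a counterexample for $k=5$ (e.g.\ $s=11$, $r=12$, circumference $44<45$), a case the paper leaves open.
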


\begin{proof} 
Let \(A\) be a clique of order \(2k\), let \(B\) be a copy of
the star \(K_{1,2k}\), and let \(T_1,\ldots,T_{2k}\) be pairwise disjoint
triangles.  Define
$G=A\vee \bigl(B\cup T_1\cup\cdots\cup T_{2k}\bigr),$
that is, every vertex of \(A\) is joined to every vertex of
\(B\cup T_1\cup\cdots\cup T_{2k}\), and there are no other edges between
distinct members of \(B,T_1,\ldots,T_{2k}\).

We first compute the relevant parameters.  Let $\kappa(G)$ denote the connectivity of $G$.
Since deleting \(A\) disconnects
the graph, \(\kappa(G)\le 2k\).  On the other hand, after deleting fewer than
\(2k\) vertices, at least one vertex of \(A\) remains, and this vertex is
adjacent to every other remaining vertex.  Hence \(G\) remains connected.
Thus
$\kappa(G)=2k,$ and
so \(G\) is \(k\)-connected.  Moreover, a leaf of \(B\) has degree \(2k+1\),
while every other vertex has degree at least \(2k+1\).  Therefore,
$\delta(G)=2k+1.$
Set \(r=2k+1\).  Then \(r\ge 2k\).

We next determine the circumference of \(G\).  Let \(C\) be any cycle of
\(G\).  The vertices of \(C\) outside \(A\) occur in path segments lying in
the components \(B,T_1,\ldots,T_{2k}\) of \(G-A\).  There are at most
\(|A|=2k\) such segments, because each non-empty segment must be inserted
between two consecutive vertices of \(C\cap A\).  Each such segment has at
most three vertices, since the longest path in \(K_{1,2k}\) has order \(3\),
and the longest path in a triangle also has order \(3\).  Hence
$|C|\le |A|+3|A|=2k+6k=8k.$
This bound is attained by a cycle using all vertices of \(A\) and all
vertices of the triangles \(T_1,\ldots,T_{2k}\), inserting each triangle as a
three-vertex path between two consecutive vertices of \(A\).  Consequently
$c(G)=8k.$
Since \(k\ge 6\) and $r=2k+1$, we have
$k(r-k+2)=k(k+3)>8k=c(G)$.

Let \(C\) be such a longest cycle.  Then \(C\) avoids all vertices of the
star \(B\).  Thus, \(B\) is a component of \(G-V(C)\), and
$|V(B)|=2k+1\ge k-1.$
By the definition of a \(D_{k-1}\)-cycle used above, \(C\) is not a
\(D_{k-1}\)-cycle.
Thus, \(C\) is a longest cycle of a \(k\)-connected graph with
\(\delta(G)\ge r\ge 2k\), but \(C\) is not a \(D_{k-1}\)-cycle and
$|C|=8k<k(r-k+2).$
This gives the desired counterexample for Voss's conjecture.
\end{proof}

\subsection{Two new problems}
Motivated by the results in this paper, it is natural to consider the following problems.
Recall that $p(H)$ denotes the number of vertices of a longest path in a graph $H$. 

\begin{problem}\label{prob:two}
Let $k\geq 2$.
Let $C$ be a cycle of length $c$ in a 2-connected graph $G$, and let $H$ be a component of $G-C$. Suppose that $C$ is locally longest with respect to $H$ and that $H$ is locally $k$-connected to $C$. If $p(H)\geq k-1$, then
$c\ge k(\delta_G(H)-k+2).$
\end{problem}

\begin{problem}\label{prob:three}
Let $k\geq 2$.
Let $G$ be a $2$-connected graph on $n$ vertices and let $C$ be a longest cycle of $G$. Let $H$ be a component of $G-C$ that is locally $k$-connected to $C$. If
$\delta(G)\ge \frac{n+k(k-1)}{k+1},$
then $p(H)\leq k-1$.
\end{problem}

We conclude this paper with the following implications. 

\begin{proposition}\label{prop:51}
Problem~\ref{prob:two} implies Problem~\ref{prob:three} and Jung's conjecture.
Moreover, Problem~\ref{prob:three} implies Conjecture~\ref{conj:bondy}.
\end{proposition}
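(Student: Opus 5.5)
The plan has three implications to check, and each one copies a reduction already used in the paper.

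\emph{Problem~\ref{prob:two} implies Jung's conjecture.} Let $G$ be $k$-connected and $C$ a longest cycle with a path on at least $k-1$ vertices in $G-V(C)$. Let $H$ be the component of $G-V(C)$ containing that path. Since $k\ge2$, $G$ is $2$-connected. The maximality of $C$ gives that $C$ is locally longest with respect to $H$, and Menger's theorem gives that $H$ is locally $k$-connected to $C$, exactly as in the proof of Theorem~\ref{thm:main-jung}. Also $p(H)\ge k-1$ and $\delta_G(H)\ge\delta$, so Problem~\ref{prob:two} yields $|C|\ge k(\delta_G(H)-k+2)\ge k(\delta-k+2)$.

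\emph{Problem~\ref{prob:two} implies Problem~\ref{prob:three}.} I will follow the proof of Theorem~\ref{thm:local-bondy}, with Problem~\ref{prob:two} in place of both Theorem~\ref{thm:local-jung} and Theorem~\ref{thm:41}. Suppose $p(H)\ge k$. As $C$ is longest, it is locally longest with respect to $H$. Problem~\ref{prob:two} applied with $k$ gives
\[
c\ge k\Bigl(\frac{n+k(k-1)}{k+1}-k+2\Bigr)=\frac{k(n+2)}{k+1},
\]
so $|V(G-C)|\le (n-2k)/(k+1)$. The degree count from the paper then shows every $v\in V(H)$ satisfies $d_C(v)\ge k+1$, so $H$ is locally $(k+1)$-connected to $C$ via single edges. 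Since $p(H)\ge k=(k+1)-1$, Problem~\ref{prob:two} applied with $k+1$ gives $c\ge (k+1)(\delta_G(H)-k+1)\ge n-k+1$. Combined with $|V(H)|\ge k$, this contradicts $|V(G)|=n$.

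There are two advantages over the original proof. First, no lower bound on $n$ is needed: Problem~\ref{prob:two} has no hypothesis like $\delta\ge6k$, and it does not need the upper bound $p\le\delta-k$ that Theorem~\ref{thm:41} required. Second, $G$ only needs to be $2$-connected, which is exactly the hypothesis of Problem~\ref{prob:two}. The one edge case is when $C$ is Hamiltonian, where $H$ does not exist and there is nothing to prove.

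\emph{Problem~\ref{prob:three} implies Conjecture~\ref{conj:bondy}.} For $k=1$ the conjecture is Dirac's theorem. For $k\ge2$, a $k$-connected $G$ is $2$-connected, and by Menger each component $H$ of $G-V(C)$ is locally $k$-connected to $C$. Problem~\ref{prob:three} then gives $p(H)\le k-1$ for every component $H$. Since every path in $G-V(C)$ lies in a single component, this is the conclusion of the conjecture.

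The only step needing care is checking that the two invocations of Problem~\ref{prob:two} in the second implication have all their hypotheses. In particular, the $(k+1)$ application needs $p(H)\ge k$, and local $(k+1)$-connectivity must come from direct edges rather than from any global connectivity.
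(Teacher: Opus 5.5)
Your proposal is correct and follows the paper's proof. The main implication, Problem~\ref{prob:two} $\Rightarrow$ Problem~\ref{prob:three}, is argued exactly as in the paper: apply Problem~\ref{prob:two} with $k$ to get $|V(G-C)|\le (n-2k)/(k+1)$, deduce $d_C(v)\ge k+1$ for every $v\in V(H)$, then apply it with $k+1$ to get $|C|\ge n-k+1$, which contradicts $|V(H)|\ge k$; your written-out versions of the other two implications are the routine Menger-based reductions that the paper says follow directly.
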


\begin{proof}
We show that Problem~\ref{prob:two} implies Problem~\ref{prob:three}, while the other implications follow directly.
Suppose to the contrary that $p(H)\ge k$. Since $p(H)\ge k-1$ and $H$ is locally $k$-connected to $C$, a solution of Problem~\ref{prob:two} gives
$|C|\ge k\left(\frac{n+k(k-1)}{k+1}+2-k\right)=\frac{k(n+2)}{k+1}.$
Therefore,
$|V(G-C)|=n-|C|\le n-\frac{k(n+2)}{k+1}=\frac{n-2k}{k+1}.$
For every $v\in V(H)$,
$d_C(v)\ge d_G(v)-d_H(v)\ge \frac{n+k(k-1)}{k+1}-\frac{n-2k}{k+1}+1=k+1.$
Hence $H$ is locally $(k+1)$-connected to $C$. Applying Problem~\ref{prob:two} again, now with $k+1$ in place of $k$, we obtain
$|C|\ge (k+1)\left(\frac{n+k(k-1)}{k+1}+2-(k+1)\right)=n-k+1.$
But $p(H)\ge k$ implies $|V(H)|\ge k$, so
$|C|=n-|V(G-C)|\le n-|V(H)|\le n-k,$
a contradiction. 
This gives a solution to Problem~\ref{prob:three}.
\end{proof}

\section*{Acknowledgment}
The first and second authors are grateful to Binlong Li and Hehui Wu for many helpful discussions over the years concerning parts of an earlier version of this manuscript.

{\fontsize{9.5}{14}\selectfont
}
\end{document}